\newtheorem{theorem}{Theorem}[section]
\newtheorem{lemma}[theorem]{Lemma}
\newtheorem{corollary}[theorem]{Corollary}
\newtheorem{proposition}[theorem]{Proposition}
\newtheorem{definition}[theorem]{Definition}
\newtheorem{assumption}[theorem]{Assumption}
\newtheorem{example}[theorem]{Example}
\newcommand{\cX}{\mathcal{X}}
\newcommand{\cA}{\mathcal{A}}
\newcommand{\cB}{\mathcal{B}}
\newcommand{\cM}{\mathcal{M}}
\newcommand{\cG}{\mathcal{G}}
\newcommand{\cGphl}{\mathcal{G}_{\phi,h}^\lambda}
\newcommand{\cDphl}{\mathcal{D}_{\phi,h}^\lambda}
\newcommand{\cDphlk}{\mathcal{D}_{\phi,h}^{\lambda_k}}
\newcommand{\Tphl}{\mathbf{T}_{\phi,h}^\lambda}
\newcommand{\Tphlk}{\mathbf{T}_{\phi,h}^{\lambda_k}}
\newcommand{\Tshe}{\mathbf{T}_{\phi+\mathrm{id}_{\cX_s}\!,h}^\eta}
\newcommand{\cGphe}{\mathcal{G}_{\phi,h}^\eta}
\newcommand{\cGshe}{\mathcal{G}_{\phi+\mathrm{id}_{\cX_s}\!,h}^\eta}
\newcommand{\RR}{\mathbb{R}}
\newcommand{\EE}{\mathbb{E}}
\newcommand{\prox}{\mathbf{prox}}
\newcommand{\Tshesk}{\mathbf{T}_{\phi,\,h}^{\eta_{s,k}}}
\newcommand{\tcDshesk}{\widetilde{\mathcal{D}}_{\phi,\,h}^{\eta_{s,k}}}
\newcommand{\cDshesk}{\mathcal{D}_{\phi,\,h}^{\eta_{s,k}}}
\newcommand{\trinorm}[1]{\left\lvert\left\lvert\left\lvert #1 \right\rvert\right\rvert\right\rvert}
\newcommand{\argmin}{\mathop{\mathrm{argmin}}}
\title{Stochastic Bregman Proximal Gradient Method Revisited: \\Kernel Conditioning and Painless Variance Reduction }
\author{Junyu Zhang\thanks{National University of Singapore, junyuz@nus.edu.sg}}
\date{}
\begin{document}

\maketitle

\begin{abstract}
We investigate stochastic Bregman proximal gradient (SBPG) methods for minimizing a finite-sum nonconvex function $\Psi(x):=\frac{1}{n}\sum_{i=1}^nf_i(x)+\phi(x)$, where $\phi$ is convex and nonsmooth, while $f_i$, instead of gradient global Lipschitz continuity, satisfies a smooth-adaptability condition w.r.t. some kernel $h$. Standard acceleration techniques for stochastic algorithms (momentum, shuffling, variance reduction) depend on bounding stochastic errors by gradient differences that are further controlled via Lipschitz property. Lacking this, existing SBPG results are mostly limited to vanilla stochastic approximation schemes that cannot obtain the optimal $O(\sqrt{n})$ complexity dependence on $n$. Moreover, existing works report complexities under various nonstandard stationarity measures that largely deviate from the standard minimal limiting Fr\'echet subdifferential $\mathrm{dist}(0,\partial\Psi(\cdot))$. Our analysis 
reveals that these popular nonstandard stationarity measures are often much smaller than $\mathrm{dist}(0,\partial\Psi(\cdot))$ by a large or even unbounded instance-dependent mismatch factor, leading to overstated solution quality and producing non-stationary output. This also implies that current complexities based on nonstandard measures are actually asymptotic and instance-dependent if translated to $\mathrm{dist}(0,\partial\Psi(\cdot))$. To resolve these issues, we design a new gradient mapping $\mathcal{D}_{\phi,h}^\lambda (\cdot)$ by BPG residuals in dual space and a new kernel-conditioning (KC) regularity, under which the mismatch between $\|\mathcal{D}_{\phi,h}^\lambda (\cdot)\|$ and  $\mathrm{dist}(0,\partial\Psi(\cdot))$ is provably $O(1)$ and instance-free. Moreover, KC-regularity guarantees Lipschitz-like bounds for gradient differences, providing general analysis tools for momentum, shuffling, and variance reduction under smooth-adaptability. We illustrate this point on variance reduced SBPG methods and establish an $O(\sqrt{n})$ complexity dependence for $\|\mathcal{D}_{\phi,h}^\lambda (\cdot)\|$, providing instance-free (worst-case) complexity under  $\mathrm{dist}(0,\partial\Psi(\cdot))$. 
\end{abstract}

\section{Introduction}
In this paper, we consider the composite  nonconvex optimization problem
\begin{equation}
    \label{prob:main-det}
    \min_{x\in\RR^d} \Psi(x) = f(x) + \phi(x) \qquad\mbox{with}\qquad f(x) = \frac{1}{n}\sum_{i=1}^n f_i(x),
\end{equation} 
where $\phi(x)$ is a convex but possibly non-differentiable function, while $f$ and each $f_i$ are nonconvex and continuously differentiable.
In particular, we consider the problem class where the gradient $\nabla f$ (or $\nabla f_i$) is not globally Lipschitz continuous. With various applications to optimizing log-determinant of
Fisher information matrix \cite{hanzely2021accelerated}, D-optimal design and generalized volumetric optimization \cite{lu2018relatively}, quadratic inverse problem \cite{bolte2018first}, 
multi-layer neural networks \cite{ding2023nonconvex,mukkamala2019bregman}, etc., this problem setting has  drawn increasing interest recently. 

Under classic nonconvex composite finite-sum setting where each $f_i$ has globally Lipschitz continuous gradients, complexities of stochastic first-order methods are well-understood for problem \eqref{prob:main-det}. Define the proximal operator and the gradient mapping as 
\begin{equation}
    \label{defn:GradMap-standard}
    \prox_{\lambda\phi}(v) := \argmin_{x\in\RR^d} \phi(x) + \frac{1}{2\lambda}\|x-v\|^2\quad\mbox{and}\quad G^\lambda_\phi(x):=\frac{x-\prox_{\lambda\phi}(x-\lambda\nabla f(x))}{\lambda}\,.
\end{equation}
To obtain an expected $\epsilon$-stationary point  $\Bar{x}$ s.t. $\mathbb{E}\big[\|G^\lambda_\phi(\Bar{x})\|^2\big]\leq \epsilon$, the vanilla mini-batch  stochastic approximation (SA) scheme requires $O(\epsilon^{-2})$ samples \cite{ghadimi2016mini}. Common techniques to accelerate SA include momentum, random shuffling, and stochastic variance reduction, etc. Among these techniques, momentum often speeds up practical performance while not improving theoretical complexity \cite{liu2020improved}; 
Random shuffling achieves an improved sample complexity of $O(\sqrt{n}\epsilon^{-1.5})$ in case $\phi=0$ \cite{mishchenko2020random} or an $O(n\epsilon^{-1.5})$ sample complexity for general nonsmooth convex $\phi$ \cite{mishchenko2022proximal}; Various stochastic variance reduction techniques can further improve the sample complexity to  $O(\sqrt{n}\epsilon^{-1})$ \cite{nguyen2019optimal,pham2020proxsarah,wang2018spiderboost,cutkosky2019momentum} regardless of the nonsmooth component $\phi$,  matching the information theoretic lower bounds \cite{arjevani2023lower,zhou2019lower}.

In the absence of global gradient  Lipschitz continuity, Bolte and Nesterov proposed two equivalent concepts called smooth-adaptability (smad) \cite{bauschke2017descent} and relative smoothness \cite{lu2018relatively}, respectively.  This condition allows $f$ to behave smoothly relative to the Bregman divergence of some general kernel function $h$. Then the (deterministic) Bregman proximal gradient (BPG) method was proposed as:
\begin{equation}
    \label{defn:BPG-standard}
    x_{k+1} = \mathbf{T}_{\phi,h}^\lambda\big(x_k, \nabla f(x_k)\big)\qquad\mbox{with}\qquad \mathbf{T}_{\phi,h}^\lambda (x,v) := \argmin_{y\in\RR^d}\,\, y^\top v + \phi(y) + \lambda^{-1}D_h(y,x),\vspace{-0.1cm}
\end{equation} 
where $D_h(y,x):=h(y)-\nabla h(x)^\top(y-x)-h(x)$ stands for the Bregman divergence induced by $h$. 

On the one hand, the smooth-adaptability condition ensures a generalized descent lemma  \cite{bauschke2017descent}, leading to an $O(1/\epsilon)$ complexity for a wide range of deterministic BPG variants \cite{bolte2018first,teboulle2018simplified,gao2020randomized,gao2021convergence,mukkamala2020convex,leinertial} under several different nonstandard stationarity measures, a popular example is the  (squared) size of the following  Bregman proximal gradient mapping suggested by \cite[Section 4.1]{bolte2018first}:
\begin{equation}
    \label{defn:GradMap-BPG}
    \mathcal{G}_{\phi,h}^\lambda(x):= \frac{x-\mathbf{T}_{\phi,h}^\lambda\big(x,\nabla f(x)\big)}{\lambda}.  
\end{equation} 
As $\cGphl(\cdot)$ is defined by the residual of (primal) BPG iterates, we call it primal gradient mapping for simplicity. It is worth noting that these nonstandard measures may significantly deviate from the standard minimal squared limiting Fr\'echet subdifferential measure $\mathrm{dist}^2(0,\partial\Psi(\cdot))$, which is abbreviated as Fr\'echet measure in later discussion, see our detailed analysis in Section \ref{section:preliminary}. 

On the other hand, existing results on stochastic BPG algorithms are still limited to the vanilla SA schemes, with only $O(\epsilon^{-2})$ sample complexities \cite{davis2018stochastic,ding2023nonconvex,xiao2021unified,zhang2018convergence,fatkhullin2024taming}, under a variety of nonstandard measures. In terms of the attempts to exploit stochastic variance reduction techniques, \cite{latafat2022bregman} adopted a Finito/MISO scheme for stochastic BPG method. Only asymptotic convergence and $O(n)$-dependence has been obtained for nonconvex problems. In a more recent attempt  \cite{wang2024bregman}, the authors directly assume the knowledge of a uniform upper bound of local Lipschitz constants for all (stochastic) iterations and explicitly use it in algorithm design, leading to chicken-and-egg issues and essentially limiting their analysis to classic mirror descent with globally Lipschitz continuous gradient. Overall, it is still not clear how to improve the complexity of stochastic BPG beyond the vanilla SA results. 

Based on above discussion, there are  two clear questions, about the nonstandard stationarity measures and the general tool to accelerate stochastic BPG. Let us discuss them in detail one by one.\vspace{0.2 cm}

\noindent\textbf{Instance dependent or independent? }$\,$
Since Nemirovski's landmark book \cite{nemirovskij1983problem} and Nesterov's early discussion on optimal algorithms \cite{nesterov1983method,nesterov2018lectures} in optimization community, the complexity concept has become formalized, mostly referring to complexity bound of the \emph{worst-case} instance in the considered \emph{problem class}. Such a bound is regardless of which instance in the problem class is picked and is hence \emph{instance-free}. Though for various reasons, people still derive \emph{instance-dependent} complexity that relies on the property of each specific input instance, the instance-free (worst-case) complexity is equally important due to its robustness against hard instances. Examples include the instance dependent or independent bounds for bandit optimization \cite[etc.]{agrawal2012analysis,kaufmann2012thompson,bubeck2013prior}, reinforcement learning \cite[etc.]{wang2022gap}, two-person Markov games \cite{dou2022gap,xie2020learning}, see detailed discussion in Section \ref{subsec:issues}. Though this issue has long been  overlooked in BPG literature, our analysis shows that the current complexity results all exhibit an instance-dependent nature hidden under the local scaling of the nonstandard  measures, leaving the instance-free (worst-case) complexity an open question for deterministic and stochastic BPG.  

In details, given any kernel $h$ and $L>0$, the interested problem class $\mathcal{P}$ consists of all problem instances of \eqref{prob:main-det} with convex $\phi$ and $L$-smooth adaptable $f$ w.r.t. kernel $h$. For example, the classic $L$-smooth problem class is included by the quadratic kernel $h(x)=\|x\|^2/2$.  Due to technical difficulties in analyzing complexity for bounding $\mathrm{dist}^2(0,\partial\Psi(\cdot))$, existing BPG literature typically circumvents this challenge by adopting alternative nonstandard stationarity measures from the algorithmic residuals of primal iterates, including the (squared) primal gradient mapping  \cite{bolte2018first,gao2020randomized,gao2021convergence,teboulle2018simplified,ding2023nonconvex}, the (squared) local norm of some envelope's gradient \cite{davis2018stochastic}, the scaled Bregman divergence of consecutive iterations \cite{bolte2018first,gao2020randomized,gao2021convergence,teboulle2018simplified} and its symmetrized variant \cite{zhang2018convergence}. Such an inconsistency makes it hard to interpret and compare the complexities in different works. To resolve such ambiguity, we propose to calibrate an arbitrary stationarity measure $\cM(x)$ against the standard Fr\'echet measure by a mismatch factor $C_\cM(x):=\frac{\mathrm{dist}^2(0,\partial\Psi(x'))}{\cM(x)}$, where $x$ is typically an algorithm's output and $x'$ may differ from but depend on $x$ in the nonsmooth instances. The introduction of this factor removes the local scaling effect in different stationarity measures, facilitating a clearer understanding of existing complexity results under a unified perspective. Then for the interested problem class $\mathcal{P}$, a natural question is:

\begin{mdframed}[backgroundcolor=gray!20,leftmargin=15pt, rightmargin=15pt]
  \textbf{Q-1.} Is the mismatch factor $C_\cM$ uniformly bounded over $\mathcal{P}$ for the popular nonstandard measures in current literature?  If not, how to interpret the existing complexity results? Can we find a new measure with constant mismatch and analyze complexity under it?
\end{mdframed}
This question is closely related to the instance-dependent nature of existing works. Take the squared primal gradient mapping $\cM(\cdot) = \|\cGphl(\cdot)\|^2$ for example, existing results output a point $\bar{x}$ such that $\|\cGphl(\bar{x})\|^2\leq \epsilon$ with   $O(\epsilon^{-2})$   samples \cite{davis2018stochastic,ding2023nonconvex,xiao2021unified}. If the mismatch $C_{\cM}$ is uniformly bounded for the output over all instances in $\mathcal{P}$, then existing results immediately translates to the same  $O(\epsilon^{-2})$ sample complexity for finding some $\bar{x}$ such that $\mathrm{dist}^2(0,\partial\Psi(\bar{x}))\leq \epsilon$, except for an additional constant factor (mismatch upper bound) hidden in big-O. Such a result will be independent of the instances in $\mathcal{P}$, and is thus an instance-free  (worst-case) complexity. Unfortunately, this does not hold.  

In general, our analysis shows a strong instance-dependent nature for the above discussed nonstandard measures. Their mismatch to the standard Fr\'echet measure can vary drastically among the different instances in $\mathcal{P}$ and can potentially be unbounded in the worst-case. This indicates that all the reviewed \emph{seemingly} instance-free $O(\epsilon^{-2})$ complexities based on nonstandard measures are  actually instance-dependent, with dependency hidden in the local scaling of stationarity measures. And they are not able to infer the worst-case complexity under the Fr\'echet measure. \vspace{0.2cm}

\noindent\textbf{How to provably accelerate stochastic BPG? }$\,\,$  Current  framework for analyzing stochastic BPG methods is mostly the classic vanilla SA approach based on the generalized descent lemma \cite{bauschke2017descent} and the bounded variance assumption, see e.g. \cite{davis2018stochastic,ding2023nonconvex,xiao2021unified,zhang2018convergence,fatkhullin2024taming}. However, more recent acceleration techniques for stochastic first-order algorithms, like momentum \cite{liu2020improved}, shuffling \cite{mishchenko2020random,mishchenko2022proximal}, and variance reduction \cite{nguyen2019optimal,pham2020proxsarah,wang2018spiderboost,cutkosky2019momentum}, 
all share a same analysis workflow that bounds the stochastic errors by gradient differences, and then controls gradient differences by iterative descents through gradient Lipschitz property. Unfortunately, the smooth-adaptability condition alone is not able to ensure an appropriate Lipschitz-like bound for gradient differences. Due to this issue, even under the nonstandard (instance-dependent) stationarity measures, only the typical $O(\epsilon^{-2})$ complexity for vanilla SA approach was obtained. Therefore, the second question in this paper arises:
 
\begin{mdframed}[backgroundcolor=gray!20,leftmargin=15pt, rightmargin=15pt]
  \textbf{Q-2.} Is there a natural kernel regularity condition that is able to ensure an appropriate Lipschitz-like bound for gradient differences under smooth-adaptability? 
\end{mdframed}
If such a Lipschitz-like bound can be established, then it may function as a general tool for analyzing momentum, shuffling or variance reduced stochastic BPG methods under smooth-adaptability.  \vspace{0.2cm}

\noindent\textbf{Main contribution. } To resolve the  two questions, we propose to investigate the BPG method from the dual space. For Q-1, we propose a new Bregman proximal gradient mapping  
\begin{equation}
    \label{defn:GradMap-New}
    \cDphl(x) := \frac{\nabla h(x)-\nabla h \big(\mathbf{T}_{\phi,h}^\lambda \big(x,\nabla f(x)\big)\big)}{\lambda}
\end{equation}
defined by the dual residual of the BPG step, and we will call it the dual gradient mapping in this paper. In addition, we also introduce and thoroughly discuss a novel \emph{kernel conditioning} (KC) regularity condition on the kernel function $h$ that has yet been studied in existing results. We prove that $\cDphl(\cdot) = \nabla \Psi(\cdot)$ always hold when  $\phi = 0$. For the general nonsmooth case where $\phi\neq0$, we use KC-regularity condition to establish a  uniformly bounded mismatch factor between the squared dual gradient mapping and Fr\'echet measure for all instances in the problem class $\mathcal{P}$. As this gradient mapping emerges from the dual perspective of BPG iterations, it  naturally involves in the descent terms and is very convenient in the analysis. For Q-2, we start from the KC-regularity and establish a new uniform local Lipschitz-like bound, linking gradient differences and Bregman divergence (or another non-Euclidean distance). It is local in the sense that the Lipschitz-constant-like coefficient is only bounded for $\delta$-close points given any predetermined $\delta>0$. On the other hand, it is also uniform, or global, in the sense that this upper bound remains the same for all $\delta$-close points throughout the whole space. Therefore, there will always be a Lipschitz-like bound for gradient differences as long as one prevents too aggressive update. This provides a general tool for analyzing the SA acceleration techniques that are based on gradient difference bound, like momentum, shuffling, and variance reduction. Given the above development of a general analysis toolkit, we choose the variance reduction technique to illustrate how they can be applied to speed up stochastic BPG.

First, due to the technical simplicity and wide popularity of the instance-dependent complexity analysis in current BPG research, we introduce a simple mechanism that facilitates the analysis of many existing variance reduction techniques for providing instance-dependent bound based on the most popular nonstandard stationarity measure, the squared primal gradient mapping. Via the gradient difference bound ensured by KC-regularity and a novel probabilistic analysis, we prove that there exists a high probability event $\mathcal{A}$, conditioning on the success of which the proposed methods output a point $\Bar{x}$ such that $\EE\big[\|\cG_{\phi,h}^\lambda(\Bar{x})\|^2 | \mathcal{A}\big]\leq\epsilon$ with an $O(\sqrt{n}\epsilon^{-1})$ sample complexity. As the total iteration number $T\to+\infty$, the success probability $\mathrm{Prob}(\cA)\to1$ automatically with a sublinear rate. In particular, the $O(\sqrt{n})$ dependence on $n$ is optimal because it matches the complexity lower bound for stochastic finite-sum optimization \cite[Theorem 4.7]{zhou2019lower}. 

Second, to obtain a robust and stable complexity bound that works for all possible instances, we slightly modify the algorithmic parameters of the above variance reduced BPG and establish an $\widetilde{O}\big(\sqrt{n}L_\epsilon\epsilon^{-1}\big)$ sample complexity finding some $\bar{x}$ such that $\|\cDphl(\bar{x})\|^2\leq\epsilon$, for some $L_\epsilon$ factor. For mild instances, the $L_\epsilon$ factor can be viewed a constant. But in the worst case,   $L_\epsilon$ potentially contains extra $\epsilon$-dependence and may scale as the maximum kernel Hessian norm in an $O(1/\sqrt{\epsilon})$-radius region. For example, for problem class associated with an $r$-degree polynomial kernel, $L_\epsilon$ can scale as $O(\epsilon^{1-r/2})$ and is proved to tight by constructing a worst-case instance. Due to the guaranteed $O(1)$ mismatch under KC-regularity, this result directly translates to the an $\widetilde{O}\big(\sqrt{n}L_\epsilon\epsilon^{-1}\big)$ instance-free (worst-case) sample complexity based on Fr\'echet measure, closing a gap in the complexity theory of  BPG method. \vspace{0.1cm}

\noindent\textbf{Other related works.} In this paragraph, we review a few works on convex optimization without globally Lipschitz gradients, which are related but not closely related to our paper. First, within the scope of BPG type methods,  \cite{lu2018relatively,bauschke2017descent} were   concurrently the first to propose the notion of relative smoothness (or smooth adaptability). They derived an $O(1/T)$ sublinear convergence for general convex case and a linear convergence for strongly convex case. If the objective function satisfy a so-called triangle scaling property, \cite{hanzely2021accelerated} further proposed an accelerated BPG method with improved rates. In \cite{lu2019relative,hanzely2021fastest}, the authors discussed the sample complexity of stochastic BPG and its coordinate descent variant under (strong) convexity, while \cite{dragomir2021fast} studied the stochastic variance reduced BPG method for optimizing the average of $n$ smooth functions and an optimal $O(\sqrt{n})$ dependence on $n$ has been obtained. However, \cite{dragomir2021fast} relies on an abstract technical assumption that is hard to verify and interpret. 
\vspace{0.1cm}

\noindent\textbf{Organization.} In Section \ref{section:preliminary}, we start with some basic definitions and properties of the smooth adaptable functions, and then provide a thorough discussion on the instance-dependent nature of existing results, our kernel-conditioning regularity assumption, and the new dual gradient mapping. In Section \ref{section:VR}, we discuss how the kernel-conditioning regularity combined with a simple epoch bound mechanism can enable almost all the existing stochastic variance reduction schemes and provide the improved instance-dependent sample complexity under the squared primal gradient mapping. In Section \ref{section:GrdMap-new}, we propose novel adaptive step size control mechanisms for variance reduced method and provide instance-free sample complexities under the squared dual gradient mapping. We conclude this paper in Section \ref{section:comparison}. 
\vspace{0.1cm}

\noindent\textbf{Notations.} For $\forall x\in\RR^d$, We default $\|x\|:=\sqrt{x^\top x}$  the $\ell_2$-norm, and we denote $\|x\|_1:=\sum_i|x_i|$ as the $\ell_1$-norm. For a matrix $X\in\RR^{d\times d}$, we use $\lambda_{\max}(X)$ and $\lambda_{\min}(X)$ to denote the maximal and minimal eigenvalues of $X$, respectively. And we use $\|X\|$ to denote the $\ell_2$-operator norm of $X$. When $X$ is positive semidefinite, we write $X\succeq0$, and we have $\|X\| = \lambda_{\max}(X)$ in this case. For any set $\cX$, 
we denote $\mathrm{id}_{\cX}(\cdot)$ as the indicator function of the set.  Namely, $\mathrm{id}_{\cX}(x)=0$ if $x\in\cX$ and $\mathrm{id}_{\cX}(x) = +\infty$ if $x\notin\cX$. We denote the interior of $\cX$ as $\mathrm{int}(\cX)$ and we denote the boundary of $\cX$ as $\partial \cX$. We denote $[n]:=\{1,2,\cdots,n\}$. For any vector $x\in\RR^d$, the function $\mathrm{sign}: \RR^d\to\RR^d$ returns a sign vector of $x$. That is, for any $s=\mathrm{sign}(x)$, its $i$-th element satisfies $s_i=1$ if $x_i\geq0$ and $s_i=-1$ if $x_i<0$, for any $1\leq i\leq d$. Because many literature use the terminology $L$-smooth to denote $L$-Lipschitz continuity of the gradient, we will use ``continuously differentiable'' instead of ``smooth''  to avoid confusion. 

\section{Kernel-conditioning and  stationarity measures}
\label{section:preliminary}
\subsection{Preliminary results}
Before presenting the newly introduced kernel-conditioning regularity assumption and dual gradient mapping, let us provide a brief introduction to the basic concepts and properties of smooth adaptability and Bregman proximal gradient methods. 
\begin{assumption}[Smooth adaptability, \cite{bolte2018first}]
    \label{assumption:L-smad}
    Let $f$ and $h$ be twice continuously differentiable in $\mathbb{R}^d$, and let $h$ be strictly convex. Then we assume $f$ is $L$-smooth adaptable to $h$ for some positive constant $L>0$. In other words, both  $Lh+f$ and $Lh-f$ are convex functions. 
\end{assumption} 
\noindent Given the twice continuous differentiability of $f$ and $h$, Assumption \ref{assumption:L-smad} can be equivalently written  as 
\begin{equation}
    \label{asmp:L-smad-alt}
    -L\nabla^2h(x)\preceq \nabla^2 f(x)\preceq L\nabla^2h(x),\,\, \forall x\in\RR^d.
\end{equation} 
As we consider the problem class where $\nabla f$ is not globally Lipschitz continuous, then naturally, one would expect $\|\nabla^2 f(x)\|$ and $\|\nabla^2h(x)\|$  to grow unbounded in a $\limsup$ sense as $\|x\|\to+\infty$. A particularly interesting example that satisfies the smooth adaptability assumption is the function class with polynomially growing Hessian, as described below. 
\begin{proposition}[Proposition 2.1, \cite{lu2018relatively}]
    \label{proposition:poly-kernel}
    Suppose $f$ is a twice continuously differentiable function that satisfies $\|\nabla^2 f(x)\|\leq p_r(\|x\|)$ for some $r$-degree polynomial $p_r(\cdot)$. Let $L$ be such that $p_r(\alpha)\leq L(1+\alpha^r)$ for $\alpha\geq0$. Then the function $f$ is $L$-smooth adaptable to $h(x) := \frac{1}{2}\|x\|^2 + \frac{1}{r+2}\|x\|^{r+2}$.
\end{proposition}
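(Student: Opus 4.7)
The plan is to verify the equivalent characterization \eqref{asmp:L-smad-alt}, namely to show that $-L\nabla^2 h(x) \preceq \nabla^2 f(x) \preceq L\nabla^2 h(x)$ holds pointwise. Since $\nabla^2 f(x)$ is symmetric, this sandwich is equivalent to requiring $\|\nabla^2 f(x)\|_{\mathrm{op}} \leq L\,\lambda_{\min}\bigl(\nabla^2 h(x)\bigr)$ for every $x\in\RR^d$. So the strategy splits cleanly into two halves: compute (or lower bound) $\lambda_{\min}\bigl(\nabla^2 h(x)\bigr)$ in terms of $\|x\|$, and then combine with the polynomial Hessian bound on $f$.

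First I would compute the Hessian of the kernel explicitly. For $h(x)=\tfrac12\|x\|^2 + \tfrac{1}{r+2}\|x\|^{r+2}$, differentiation gives $\nabla h(x) = x + \|x\|^r x$, and a second differentiation yields
\begin{equation*}
    \nabla^2 h(x) \;=\; \bigl(1 + \|x\|^r\bigr)\, I \;+\; r\,\|x\|^{r-2}\, x x^\top.
\end{equation*}
The rank-one term $r\|x\|^{r-2} x x^\top$ is positive semidefinite, so one obtains the clean lower bound $\nabla^2 h(x) \succeq (1 + \|x\|^r)\, I$, hence $\lambda_{\min}\bigl(\nabla^2 h(x)\bigr) \geq 1 + \|x\|^r$ for all $x$ (this also covers $x=0$ by continuity).

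Second, I would apply the hypotheses on $f$ and $L$. By assumption, $\|\nabla^2 f(x)\|_{\mathrm{op}} \leq p_r(\|x\|)$, and by the choice of $L$, $p_r(\alpha)\leq L(1+\alpha^r)$ for all $\alpha\geq 0$. Chaining these,
\begin{equation*}
    \|\nabla^2 f(x)\|_{\mathrm{op}} \;\leq\; L\bigl(1 + \|x\|^r\bigr) \;\leq\; L\, \lambda_{\min}\bigl(\nabla^2 h(x)\bigr),
\end{equation*}
which is exactly the desired spectral inequality, thus establishing \eqref{asmp:L-smad-alt} and therefore Assumption \ref{assumption:L-smad}.

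There is really no hard step here: the argument is a direct computation coupled with throwing away a positive semidefinite piece of $\nabla^2 h$. The only place requiring minor care is the differentiation of $\|x\|^{r+2}$ at the origin when $r<2$, since the expression $r\|x\|^{r-2} x x^\top$ contains a potentially singular factor; however at $x=0$ the rank-one term vanishes in any case (as $xx^\top=0$), and for $r\geq 0$ the function $\|x\|^{r+2}$ is twice continuously differentiable, so the final spectral bound extends by continuity and nothing is lost.
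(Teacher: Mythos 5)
The paper does not prove this proposition itself; it is imported directly from \cite{lu2018relatively}, so there is no in-paper argument to compare against. Your proof is correct and is essentially the standard one: drop the positive semidefinite rank-one part of $\nabla^2 h(x)$ to obtain the lower bound $\lambda_{\min}\bigl(\nabla^2 h(x)\bigr)\geq 1+\|x\|^r$, then chain this with the polynomial Hessian bound $\|\nabla^2 f(x)\|\leq p_r(\|x\|)\leq L\bigl(1+\|x\|^r\bigr)$ to deduce the two-sided Loewner inequality \eqref{asmp:L-smad-alt}. One small terminological point: the scalar condition $\|\nabla^2 f(x)\|_{\mathrm{op}}\leq L\,\lambda_{\min}\bigl(\nabla^2 h(x)\bigr)$ is \emph{sufficient} for the sandwich $-L\nabla^2 h(x)\preceq\nabla^2 f(x)\preceq L\nabla^2 h(x)$ but is not equivalent to it, since the sandwich can hold even when the scalar bound fails, e.g. if the large eigendirections of $\nabla^2 f$ align with large eigendirections of $\nabla^2 h$. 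You only use the sufficient direction, so the argument goes through unchanged; just replace ``equivalent'' with ``implied by'' for precision.
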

\noindent The polynomial kernel $h$ is in fact 1-strongly convex over $\mathbb{R}^d$, and hence the Bregman proximal operator $\mathbf{T}_{\phi,h}^\lambda(\cdot)$ introduced in \eqref{defn:BPG-standard} is unique and well-defined. Under smooth adaptability, a generalized descent lemma was derived in \cite{bolte2018first}, which is a key property for analyzing the BPG type algorithms.  
\begin{lemma}[Extended descent lemma, \cite{bolte2018first}]
    \label{lemma:descent-lemma}
    Suppose $f$ and $h$ satisfy Assumption \ref{assumption:L-smad} for some constant $L>0$, then for any $\forall x,y\in\RR^d$, it holds that 
    $$|f(x)-f(y)-\langle\nabla f(y),x-y\rangle|\leq LD_h(x,y).$$
\end{lemma}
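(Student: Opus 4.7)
The plan is to exploit the two-sided convexity in Assumption 1.1 directly via the first-order characterization of convexity, which applies since both $Lh+f$ and $Lh-f$ are continuously differentiable. The quantity inside the absolute value, namely $E(x,y) := f(x)-f(y)-\langle\nabla f(y),x-y\rangle$, is exactly the ``Bregman-like'' remainder of $f$ at $y$, and the Bregman divergence $D_h(x,y)$ is the analogous remainder of $h$ at $y$. So the lemma is really asking that this remainder of $f$ be sandwiched between $-LD_h(x,y)$ and $+LD_h(x,y)$, and each bound should pop out of one of the two convexity conditions.

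First, I would use the convexity of $g_+ := Lh+f$. Since $g_+$ is differentiable and convex, the standard first-order inequality $g_+(x) \geq g_+(y) + \langle \nabla g_+(y), x-y\rangle$ holds. Rearranging, noting $\nabla g_+ = L\nabla h + \nabla f$, and recognizing $LD_h(x,y) = L\bigl(h(x)-h(y)-\langle\nabla h(y),x-y\rangle\bigr)$, this rewrites immediately as $E(x,y) \geq -L D_h(x,y)$.

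Next, I would do the symmetric argument with $g_- := Lh - f$. Again by convexity and the first-order inequality applied to $g_-$, I get $L D_h(x,y) - E(x,y) \geq 0$, i.e., $E(x,y) \leq L D_h(x,y)$.

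Combining the two one-sided bounds yields $-LD_h(x,y) \leq E(x,y) \leq LD_h(x,y)$, which is precisely $|E(x,y)| \leq LD_h(x,y)$ as claimed. There is no real obstacle here: the whole proof is a direct rewriting of the ``$\pm$'' convexity assumption, and the only thing to be careful about is making sure the $\nabla h$ terms on each side cancel into the clean $D_h(x,y)$ expression rather than leaving stray linear terms. Strict convexity of $h$ and twice continuous differentiability of both functions, while assumed globally in Assumption 1.1, are not actually used in this particular argument — plain convexity and differentiability of $Lh\pm f$ already suffice.
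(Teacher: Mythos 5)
Your proof is correct, and it is the standard argument for this result (the paper itself does not reprove it, simply citing \cite{bolte2018first}, where the proof proceeds exactly as you describe: apply the first-order convexity inequality to $Lh+f$ and to $Lh-f$, and read off the two one-sided bounds). Your closing observation that strict convexity and twice differentiability of $h$ are not needed for this particular lemma — only differentiability and convexity of $Lh\pm f$ — is also accurate and matches the weaker hypotheses under which the result is usually stated in the literature.
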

\noindent Similar versions of Assumption \ref{assumption:L-smad} and Lemma \ref{lemma:descent-lemma} are also established in \cite{bauschke2017descent,lu2018relatively}. Based on this lemma, the BPG method is proposed as a majorization minimization scheme:
\begin{equation}
    \label{defn:majorization}
    x_{k+1} = \mathbf{T}_{\phi,h}^\lambda\big(x_k, \nabla f(x_k)\big) = \argmin_{x\in\RR^d}\,\, f(x_k) + \langle\nabla f(x_k),x-x_k\rangle + \phi(x) + \lambda^{-1}D_h(x,x_k), 
\end{equation}
where we iteratively minimize an upper bound model of the objective function. Setting  $\lambda < 1/L$ and denoting $\Delta_\Psi:=\Psi(x_0)-\inf_x \Psi(x)$, standard analysis gives 
\begin{equation} 
    \label{eqn:BPG-summability}
    \sum_{k=0}^{T-1}D_h(x_{k+1},x_{k})\leq \frac{\Delta_\Psi}{1/\lambda-L},
\end{equation} 
indicating that $\min_{k\leq T} D_h(x_{k+1},x_{k})\leq O(1/T)$, 
see e.g. \cite[Proposition 4.1]{bolte2018first}. Such a summability result plays a central role in the complexity analysis of the BPG method for nonconvex problems.

\subsection{Instance-dependent nature of existing complexity results}
\label{subsec:issues}
Based on the summability property \eqref{eqn:BPG-summability}, many papers have developed their global convergence and complexity results for BPG and its variants, under various different stationarity measures that significantly diverge from the standard Fr\'echet measure. To obtain a thorough understanding of existing results and provide a unified interpretation for their complexity, we briefly discuss the stationarity measures in the existing works and then calibrate them against the standard Fr\'echet measure. For the ease of discussion, we limit our self to the deterministic BPG in the current subsection. 

One popular stationarity measure for BPG is the Bregman residual $B_{\lambda}(x):=\lambda^{-2}D_h\big(x_\lambda^+,x\big)$  with $x_\lambda^+:=\mathbf{T}_{\phi,h}^\lambda(x, \nabla f(x))$, which has been discussed in \cite[etc.]{bolte2018first,gao2020randomized,gao2021convergence,teboulle2018simplified,davis2018stochastic}. That is, given $x_{k}$ and $x_{k+1}$ generated by \eqref{defn:BPG-standard}, the Bregman residual uses $\lambda^{-2}D_h\big(x_{k+1},x_k\big)$ to measure stationarity and convergence. In particular, \cite{zhang2018convergence} also proposed the symmetrized Bregman residual as a stationarity measure: $M_\lambda(x):=\frac{1}{\lambda^2}\big(D_h(x_\lambda^+,x)+D_h(x,x_\lambda^+)\big)$. According to \eqref{eqn:BPG-summability}, finding some solution $x\in\RR^d$ such that $B_{\lambda}(x)\leq \epsilon$ or $M_\lambda(x)\leq \epsilon$ will take at most $O(\frac{L\Delta_\Psi}{\epsilon})$  iterations. 

Another widely adopted stationarity measure is the squared primal gradient mapping size $\|\cGphl(x)\|^2$, see definition in \eqref{defn:GradMap-BPG}. This measure is often discussed together with $D_h\big(x_\lambda^+,x\big)$ or $M_\lambda(x)$ while assuming the kernel $h$ to be globally $\mu$-strongly convex, which yields
$$\|\cGphl(x)\|^2 \leq \frac{2}{\mu}B_\lambda(x)\qquad\mbox{and}\qquad\|\cGphl(x)\|^2 \leq \frac{4}{\mu}M_\lambda(x).$$
As a result, finding  $x\in\RR^d$ such that $\|\cGphl(x)\|^2\leq \epsilon$ also takes $O(\frac{L\Delta_\Psi}{\epsilon})$ iterations. This type of results and their variants can be widely observed in the literature, see \cite[etc.]{bolte2018first,gao2020randomized,gao2021convergence,teboulle2018simplified,ding2023nonconvex,davis2018stochastic}. In particular, when $\phi = 0$ and $\Psi$ is differentiable, \cite[Section 4]{davis2018stochastic} provided an alternative justification for this stationarity measure. Define the envelope function 
$$e_{\lambda\Psi}^{h}(x) := \min_{y} \Psi(y) + \frac{1}{\lambda}D_h(y,x)$$
as a surrogate of the objective function. Then \cite{davis2018stochastic} proposed to measure stationarity by a squared local norm  $\|\nabla e_{\lambda\Psi}^{h}(x)\|^2_x$ where $\|v\|_x:=\|[\nabla^2h(x)]^{-1}v\|$ for any $v\in\RR^d$. By \cite[Theorem 4.1]{davis2018stochastic}, we can derive\vspace{0.1cm}
$$\nabla e_{\lambda\Psi}^{h}(x)=\nabla^2h(x)\cdot\cGphl(x),$$
hence indicating $\|\cGphl(x)\|^2 = \|\nabla e_{\lambda\Psi}^{h}(x)\|^2_x$ and one can interpret the squared primal gradient mapping size as a scaled and squared gradient of certain surrogate envelope function.

Because of the existence of various different stationarity measures, to better understand and compare the existing results, it is necessary to compare them with a same standard benchmark. 

\begin{definition}[Limiting Fr\'{e}chet subdifferential \cite{Kruger2003}] Let $\Psi$ be a lower semicontinuous function that is potentially non-convex. A vector $u$ is said to be a Fr\'{e}chet subgradient of $\Psi$ at $x \in dom(\Psi)$ if 
$$\Psi(x+\Delta x) \geq \Psi(x)+ u^\top\Delta x  + o\left(\|\Delta x\|\right).$$
The set of Fr\'{e}chet subgradient of $\Psi$ at $x$ is called the Fr\'{e}chet subdifferential and is denoted as $\hat{\partial}\Psi(x)$. Then the limiting Fr\'{e}chet subdifferential denoted by $\partial \Psi(x)$ is defined as 
$$\partial \Psi(x)=\{v:\mathrm{there~}\exists~ x_k \rightarrow x ~\mathrm{and}~ v_k \in \hat{\partial}\Psi(x_k)~ \mathrm{s.t.}~ v_k \rightarrow v\}.$$
\end{definition}
It is known that $\partial \Psi(\cdot) = \{\nabla \Psi(\cdot)\}$ when $\Psi$ is continuously differentiable,  and $\partial \Psi(\cdot)$ equals the set of convex subgradients when $\Psi$ is convex. For our additive composite setting where $\Psi = f+\phi$, it is known that $\partial\Psi(\cdot) = \nabla f(\cdot) + \partial\phi(\cdot)$. Therefore,  the standard benchmark stationarity measure should be the Fr\'echet measure $\mathrm{dist}^2(0,\partial\Psi(\cdot))$, which reduces to $\|\nabla\Psi(\cdot)\|^2$ when $\Psi$ is differentiable,  see e.g. \cite{drusvyatskiy2018error}.  To connect the above nonstandard measures like Bregman residual with the standard Fr\'echet measure, we assume the kernel $h$ to be twice continuously differentiable and introduce a few notations. Let $\mathcal{X}\subseteq\RR^d$ be a compact set, define 
\begin{equation}
    \label{defn:condition}
    \mu_h(\mathcal{X}) \,=\, \min_{x\in\cX}\,\lambda_{\min}\big(\nabla^2 h(x)\big),\quad L_h(\mathcal{X}) \,=\, \max_{x\in\cX}\,\lambda_{\max}\big(\nabla^2 h(x)\big),\quad\mbox{and}\quad \kappa_h(\cX) := \frac{L_h(\cX)}{\mu_h(\cX)}.
\end{equation}
Because the kernel $h$ is strictly convex and twice continuously differentiable over $\RR^d$, the ratio $\kappa_h(\cX)$ is always well-defined for any compact $\cX$. For any compact sets $\cX_1\subseteq\cX_2$, it is clear that $\mu_h(\cX_1)\geq\mu_h(\cX_2)$, $L_h(\cX_1)\leq L_h(\cX_2)$, and $\kappa_h(\cX_1)\leq\kappa_h(\cX_2)$. Based on this notation, we provide a technical lemma that is useful throughout the paper. 
\begin{lemma}
    \label{lemma:grd-vs-grdmap}
    For any $x,v\in\RR^d$ and any strictly convex kernel $h$, denote $x_\lambda^+(v) := \mathbf{T}_{\phi,h}^\lambda(x,v)$.  Let $[x,x_\lambda^+(v)]$ be the line segment between $x_\lambda^+(v)$ and $x$, then there exists $u\in\partial\phi(x_\lambda^+(v))$ such that  
    \begin{equation}
        \mu_h\big([x,x_\lambda^+(v)]\big)\cdot \|x-x_\lambda^+(v)\|\leq\lambda\|v+u\|\leq L_h\big([x,x_\lambda^+(v)]\big)\cdot \|x-x_\lambda^+(v)\|\,,\nonumber
    \end{equation} 
    \begin{equation}
        \sqrt{2\mu_h\big([x,x_\lambda^+(v)]\big)\cdot D_h(x_\lambda^+(v),x)}\leq \lambda\|v+u\|\leq \sqrt{2L_h\big([x,x_\lambda^+(v)]\big)\cdot D_h(x_\lambda^+(v),x)}\,\,.\nonumber
    \end{equation}   
\end{lemma}
\begin{proof}
    By the optimality condition of the subproblem $x_\lambda^+(v)=\argmin_{y\in\RR^d}\, y^\top v + \phi(y) + \frac{1}{\lambda}D_h(y,x)$, we have $0\in \partial\phi(x_\lambda^+(v))+v+\frac{1}{\lambda}\nabla_y D_h(y,x)|_{y=x_\lambda^+(v)}$. Namely, there exists  $u\in\partial\phi(x_\lambda^+(v))$ such that 
    \begin{equation}
        \label{lm:grd-vs-grdmap-1}
        \lambda(v + u) +  \big(\nabla h(x_\lambda^+(v))-\nabla h(x)\big)=0.
    \end{equation}
    Then \cite[Theorem 2.1.9]{nesterov2018lectures}, together with the definition of $\mu_h(\cdot)$ and $L_h(\cdot)$, indicates that 
    $$\mu_h\big([x,x_\lambda^+(v)]\big)\cdot \|x-x_\lambda^+(v)\|\leq\|\nabla h(x_\lambda^+(v))-\nabla h(x)\|\leq L_h\big([x,x_\lambda^+(v)]\big)\cdot \|x-x_\lambda^+(v)\|.$$
    Combining this bound with equation \eqref{lm:grd-vs-grdmap-1} proves the first inequality of Lemma \ref{lemma:grd-vs-grdmap}. Also observe that $\nabla_y^2 D_h(y,x) = \nabla^2h(y)$, we have 
    $$\mu_h([x,x_\lambda^+(v)])\cdot I \preceq \nabla^2_y D_h(y,x) \preceq L_h([x,x_\lambda^+(v)])\cdot I\quad\mbox{for}\quad \forall y\in[x,x_\lambda^+(v)].$$ 
    Combined with the fact that $\nabla_y D_h(y,x)|_{y=x} = 0$, then   \cite[Theorem 2.1.5, Eq.(2.1.10)]{nesterov2018lectures} and \cite[Theorem 2.1.10, Eq.(2.1.24)]{nesterov2018lectures} immediately indicates 
    $$\frac{\left\|\nabla_y D_h(y,x)|_{y=x_\lambda^+(v)}\right\|^2}{2L_h([x,x_\lambda^+(v)])}\leq D_h(x_\lambda^+(v),x)-D_h(x,x)\leq \frac{\left\|\nabla_y D_h(y,x)|_{y=x_\lambda^+(v)}\right\|^2}{2\mu_h([x,x_\lambda^+(v)])}.$$
    Then substituting $D_h(x,x)=0$ and $\lambda(v+u) = \nabla h(x)-\nabla h(x_\lambda^+(v)) = -\nabla_z D_h(y,x)\mid_{y=x_\lambda^+(v)}$ to the above bound proves the second inequality of Lemma \ref{lemma:grd-vs-grdmap}.  
\end{proof}
As a direct corollary of Lemma \ref{lemma:grd-vs-grdmap}, we have the following bounds on the mismatch between the popular stationarity measures and the Fr\'echet measure, whose proof is omitted.  
\begin{corollary}
\label{corollary:mismatch}
    For any $x\in\RR^d$ and strictly convex kernel $h$, let $x_\lambda^+:=\mathbf{T}_{\phi,h}^\lambda(x, \nabla f(x))$ with step size $\lambda < 1/L$. Then it holds that     $$\frac{\mathrm{dist}^2(0,\partial\Psi(x_\lambda^+))}{B_\lambda(x)}\leq  8\kappa_h(\cX)L_h(\cX),\qquad \quad \frac{\mathrm{dist}^2(0,\partial\Psi(x_\lambda^+))}{M_\lambda(x)}\leq 4\kappa_h(\cX)L_h(\cX),$$
    $$\frac{\mathrm{dist}^2(0,\partial\Psi(x_\lambda^+))}{\|\cGphl(x)\|^2}\leq 4L_h^2(\cX),$$
    where $\cX = [x,x_\lambda^+]$. When $\phi = 0$ and hence $\Psi$ is continuously differentiable, we have 
    $$\frac{\|\nabla\Psi(x)\|^2}{B_\lambda(x)}\leq  2\kappa_h(\cX)L_h(\cX),\qquad \frac{\|\nabla\Psi(x)\|^2}{M_\lambda(x)}\leq \kappa_h(\cX)L_h(\cX),\quad\mbox{and}\quad\,\,\frac{\|\nabla\Psi(x)\|^2}{\|\cGphl(x)\|^2}\leq L_h^2(\cX).$$ 
\end{corollary}

As discussed in the introduction, the mismatch factors in Corollary \ref{corollary:mismatch} stand for the ability for a stationarity measure to upper bound the Fr\'echet measure. Take the case $\phi=0$ for example, with $\cX = [x,x_\lambda^+]$, Corollary \ref{corollary:mismatch} indicates that having $\|\cGphl(x)\|^2\leq\epsilon$ only implies $\|\nabla\Psi(x)\|^2\leq L_h^2(\cX)\epsilon$. Therefore, smaller and uniformly bounded mismatch factors are always more desirable, at least for the output.
However, for general smooth-adaptable setting, the  $L_h(\cX)$ factor in Corollary \ref{corollary:mismatch} can be unbounded over $\RR^d$ for many popular non-Lipschitz-smooth kernels. Depending on the kernel Hessian around the output, if the instance is simple and the $L_h(\cX)$ factor is mild, then the existing $O(\epsilon^{-1})$ complexity in terms of $B_\lambda(\cdot)$, $M_\lambda(\cdot)$, or $\|\cGphl(\cdot)\|^2$ can be directly translated to that of the standard Fr\'echet measure. On the other hand, for hard instances where $L_h(\cX)$ is huge or even unbounded, these results will fail to provide meaningful complexity for Fr\'echet measure or finite-step predictions for solution quality. The same instance-dependence happens for the stochastic setting, where the only difference is that for stochastic algorithms whose theoretical complexity is often established for a randomly selected output from for all iterations, the mismatch should also consider all iterations.  

Though overlooked under the topic of smooth-adaptable optimization and BPG method, we would like to point out that the issue of instance-dependent and instance-free bounds is broadly recognized in many other machine learning and optimization topics, and it is worth slightly deviating from our main topic. For example, in bandit problem or policy optimization in reinforcement learning, the instance-dependency often denotes the dependency on certain gap $\epsilon_{\text{gap}}>0$ that varies from instance to instance. For bandit problem, people first prove that the well-known Thompson sampling algorithm exhibit an $O(\frac{\ln T}{\epsilon_{\text{gap}}})$ instance-dependent logarithmic regret \cite{agrawal2012analysis,kaufmann2012thompson}. However, this bound may fail to provide meaningful information as $\epsilon_{\text{gap}}$ can be arbitrarily close to 0. A few years later, people prove an $O(\sqrt{T})$ instance-free worst-case regret for Thompson sampling \cite{bubeck2013prior}. Similar regret dependency on $\epsilon_{\text{gap}}$ can also be observed for two-person Markov games \cite{dou2022gap,xie2020learning}. Finally,  for optimizing offline Markov decision process,  gap-dependent $O(\frac{1}{\epsilon\cdot\epsilon_{\text{gap}}})$ and gap-independent $O(\frac{1}{\epsilon^2})$ complexities  \cite{wang2022gap} are also observed. In terms of our smooth-adaptable problem setting, the $L_h(\cX)$ factor just plays the role of the inverse gap $\epsilon_{\text{gap}}^{-1}$, both are unknown a priori and both can go to infinity in the worst case. Therefore, by removing the hidden  instance-dependent local kernel Hessian scaling in the popular nonstandard stationarity measures, we reveal the fact that the existing BPG complexity results reviewed above, are actually all instance-dependent results that well capture the mild instances while failing to characterize hard or worst-case instances, leaving the instance-free complexity an open gap for BPG type methods.  

It is worth noting that we have omitted the $\kappa_h(\cX)$ in the above discussion of instance-dependency. On one hand, most BPG literature, including our paper, requires the kernel to be globally $\mu$-strongly convex for some $\mu>0$, this immediately indicates $\kappa_h(\cX) \leq L_h(\cX)/\mu$. On the other hand, the KC-regularity to be introduced in Section \ref{subsec:kernel-conditioning} further ensures an instance-free constant bound for $\kappa_h(\cX)$.

Finally, to conclude this subsection, we provide an example of how the hidden $L_h(\cX)$ factor affects the mismatch against the Fr\'echet measure and how it affect convergence rate for hard instances.

\begin{example}
\label{example:counterexample-1}
Consider a bivariate instance of formulation \eqref{prob:main-det} with $\phi=0$:
\begin{equation} 
    \min_{x\in\RR^2} \Psi(x) = \frac{1}{\sqrt{2}+\ln(1+x_1^2)} + x_1^\alpha x_2^2,\nonumber
\end{equation}
where $\alpha \geq4$ is an even integer. 
\end{example}
Consider the case $\alpha =4$, by Proposition \ref{proposition: Poly-kernel-condition}, direct computation gives $\|\nabla^2 \Psi(x)\| \leq 2+6\|x\|^4$ and $\Psi$ is $8$-smooth adaptable to $h(x) = \frac{\|x\|^2}{2}+\frac{\|x\|^{r+2}}{r+2}$, for $\forall r\geq4$. With initial point $x=[1,0]$, we implement the standard BPG method \eqref{defn:BPG-standard} to solve Example \ref{example:counterexample-1} with $\alpha=4$, as shown in Figure \ref{fig:stationarity-comparison-smooth}.

\begin{figure}[H]
    \centering
    \includegraphics[width=0.2325\linewidth]{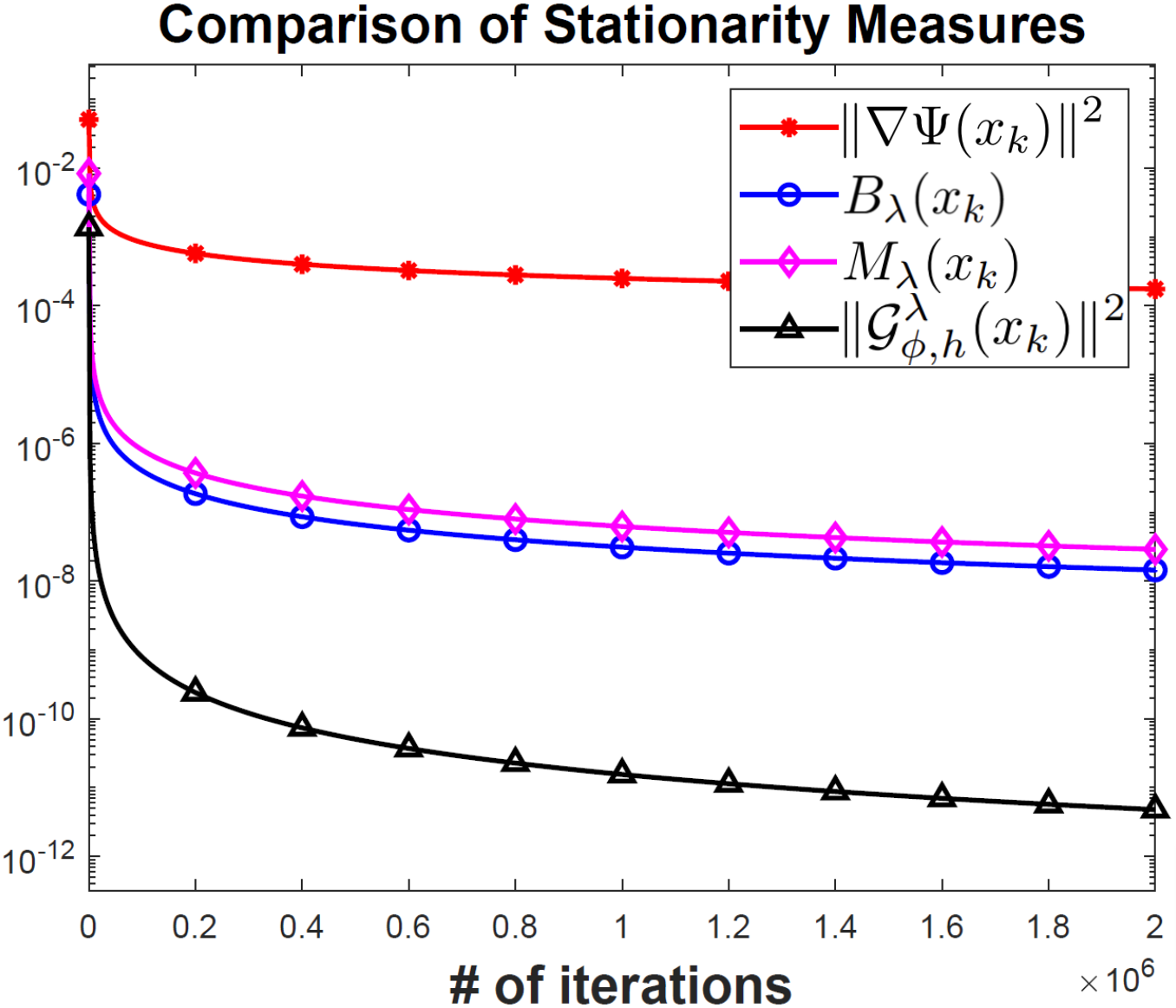} 
    \includegraphics[width=0.2445\linewidth]{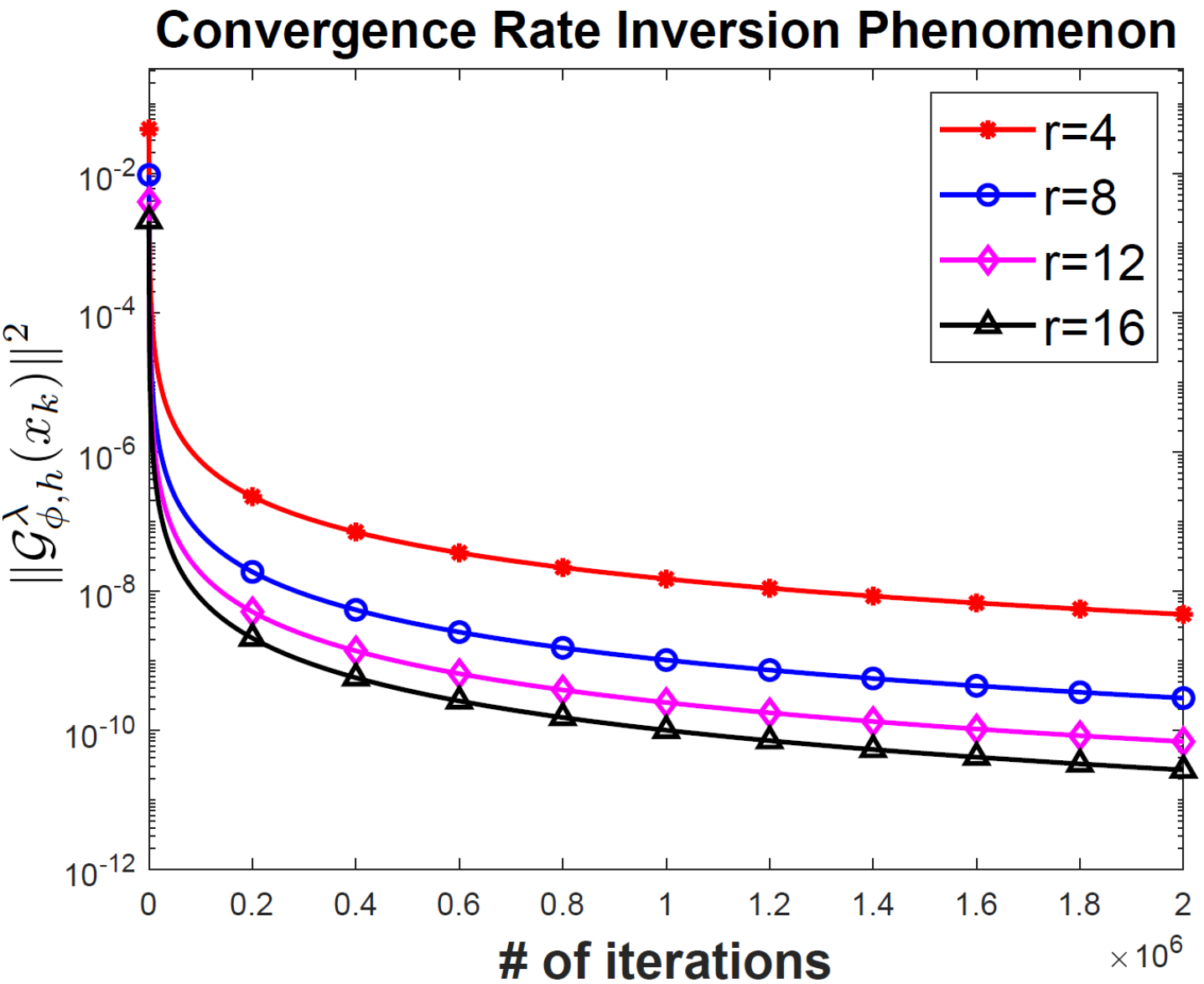}
    \includegraphics[width=0.2395\linewidth]{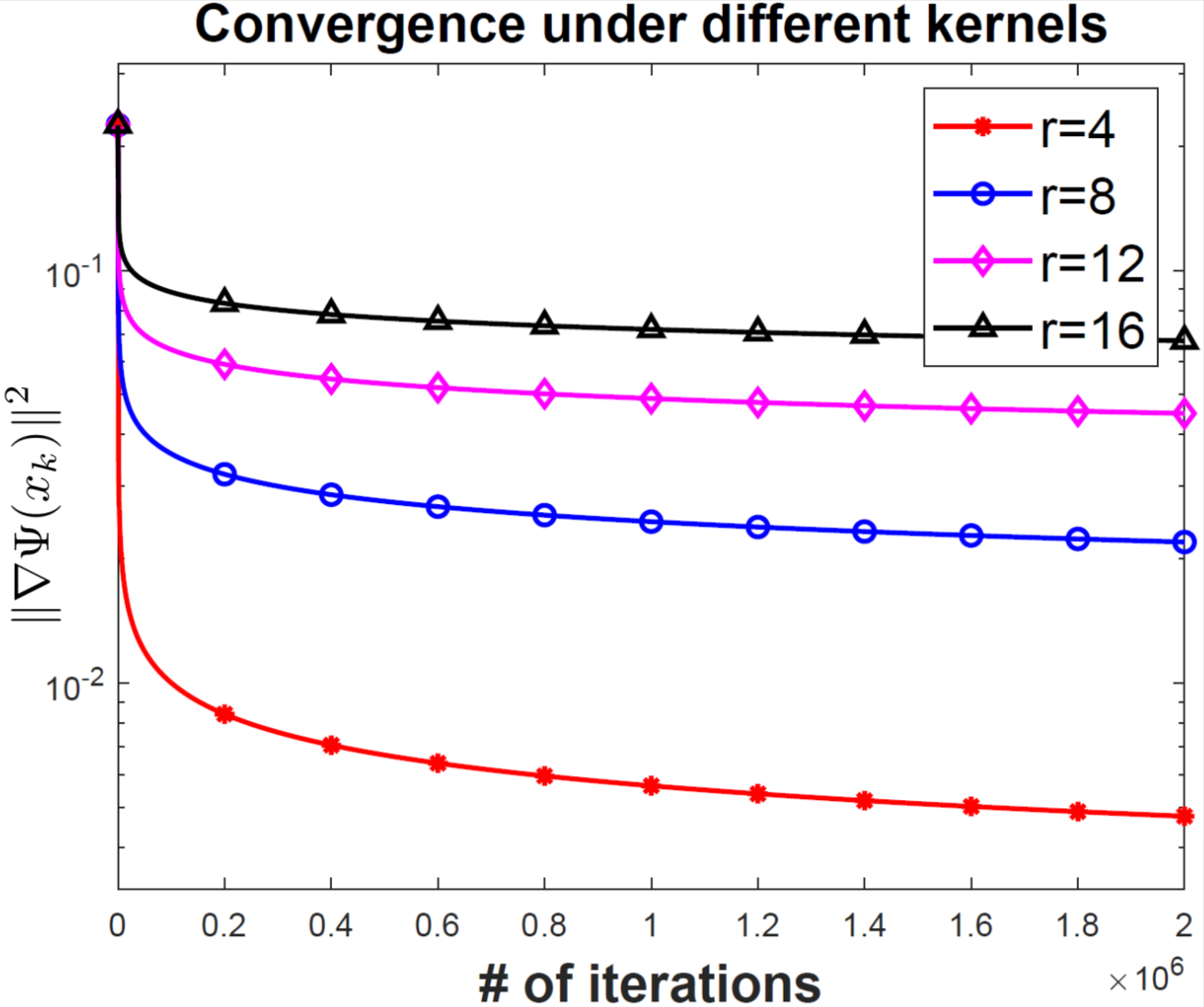}
    \includegraphics[width=0.243\linewidth]{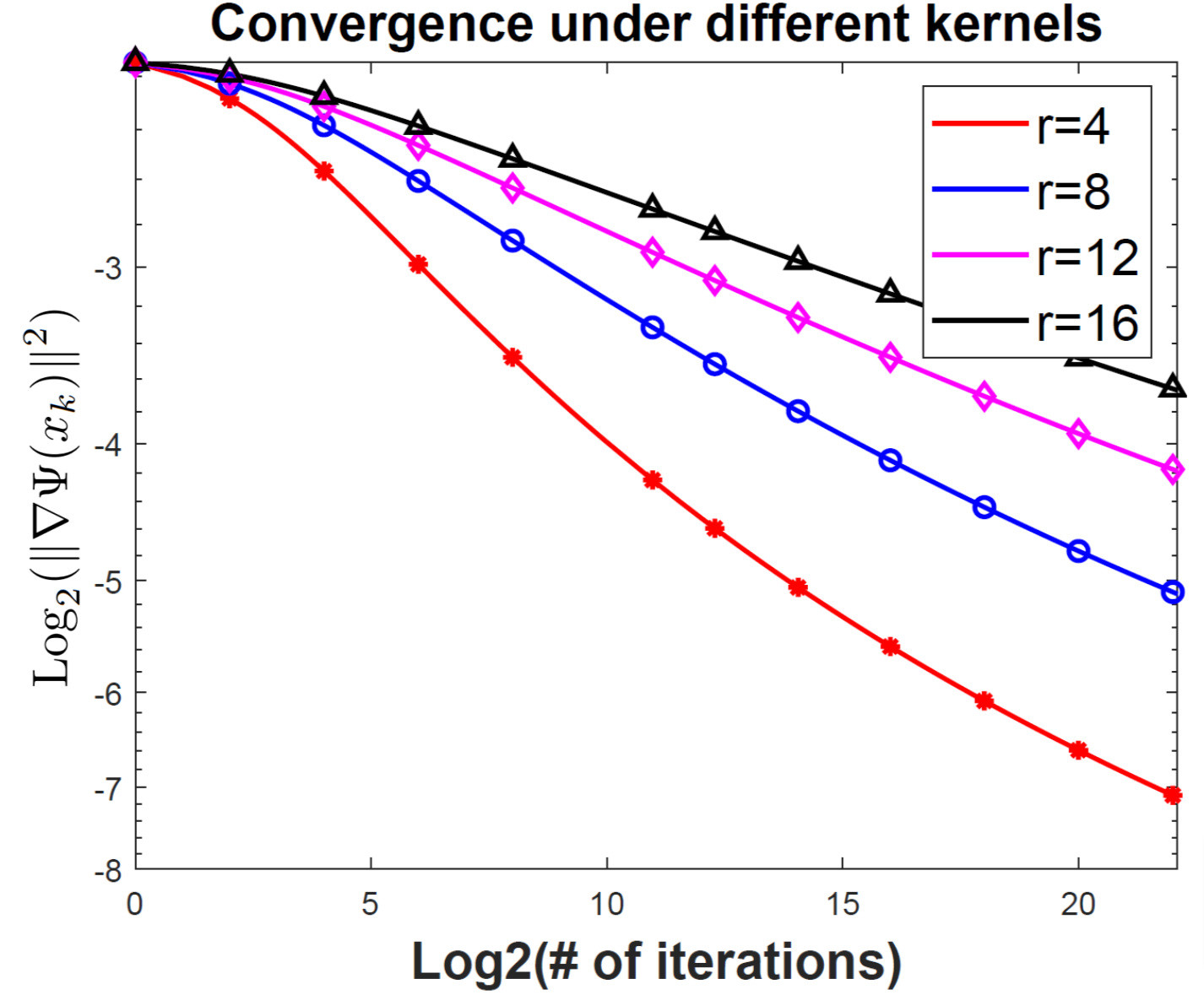}
    \caption{The first figure illustrates the mismatch between the existing stationarity measures and the squared gradient. All curves are plotted from the same sequence $\{x_k\}$ generated with kernel $r=4$. The other three figures illustrate the dependence of convergence rates on different kernels.   }
    \label{fig:stationarity-comparison-smooth}
\end{figure}

It can be observed in the first figure that even when the Bregman residuals and primal gradient mapping are small, the Fr\'echet measure, which reduces to gradient size when $\phi=0$, still remains large. For the mismatch factor discussed in Corollary \ref{corollary:mismatch}, take the squared primal gradient mapping for example, it grows to around $10^8$ while still not reaching the limit, which illustrates the mismatch issue for hard instances in our discussion. For the second and third figures, we illustrate an interesting ``rate inversion'' phenomenon, where we apply polynomial kernels of different degree $r$ to the same problem and report the convergence rate w.r.t. Fr\'echet measure and  primal gradient mapping, respectively. From the second figure, it is suggested that higher degree kernels converges ``faster'' in terms of primal gradient mapping. However, if we look at
Fr\'echet measure in the third figure, the quantity that we truly care about, kernels with lower degrees are more appropriate. Though we only plot primal gradient mapping, the same phenomenon also happens for the two Bregman residual measures. For the last figure, we plot $\log_2 \|\nabla\Psi(x_k)\|^2$ versus $\log_2 k$, whose slope represents the exponent of an $O(T^{-\gamma})$ sublinear rate. From the experiment, it is clear that the polynomial kernels of different degree $r$ result in different exponent $\gamma$ in the sublinear convergence rate, while none of them exhibits a $-1$ slope that corresponds to an $O(1/T)$ rate in the existing BPG literature. This illustrates the inability of instance-dependent bounds to characterize hard instances, while the actual worst-case complexity or convergence rate still remains unknown for BPG methods.

\subsection{A new dual gradient mapping}
Given the above discussion of several popular stationarity measures, it is crucial to discuss the convergence and complexity of BPG methods in terms of the standard Fr\'echet measure, which, unfortunately, does not directly relate to the BPG iterations. It is important to find an appropriate quantity to bridge them in the analysis. To identify such a quantity, let us take $v=\nabla f(x)$ in \eqref{lm:grd-vs-grdmap-1} and take $x_\lambda^+ = \Tphl\big(x,\nabla f(x)\big)$, then slightly rearranging the terms of \eqref{lm:grd-vs-grdmap-1} gives
\begin{equation}
    \label{eqn:grad-map-1}
    \frac{\nabla h(x)-\nabla h(x_\lambda^+)}{\lambda}\in\nabla f(x) + \partial \phi(x_\lambda^+).
\end{equation}
When $\phi = 0$ and $\Psi = f$ is continuously differentiable, the above equality reduces to 
\begin{equation}
    \label{eqn:grad-map-2}
    \frac{\nabla h(x)-\nabla h(x_\lambda^+)}{\lambda}=\nabla f(x) = \nabla\Psi(x).
\end{equation}
Therefore, we introduce a new  gradient mapping via the dual residuals of a BPG step: 
\begin{equation}
    \label{defn:grd-map-new}
    \cDphl(x):=\frac{\nabla h(x)-\nabla h \big(\Tphl\big(x,\nabla f(x)\big)\big)}{\lambda}.
\end{equation}
To differentiate the new gradient mapping from the primal gradient mapping defined by \eqref{defn:GradMap-BPG}, we call it dual gradient mapping. Next, we explain the reason why we call it ``dual''.

Note that the BPG methods are actually mirror descent algorithms with specifically designed kernels, an alternative interpretation of our new  gradient mapping $\cDphl(\cdot)$ can be obtained from the dual space explanation of the mirror descent method, which was originally presented by Nemirovski and Yudin \cite{nemirovskij1983problem}. Suppose $\phi = 0$ and $\nabla \Psi = \nabla f$.  According to their observation, the gradient $\nabla f(x_k)$ is actually  a linear functional on $\RR^d$ and hence is naturally a covector in the dual space of $\RR^d$. When using an 
\begin{wrapfigure}{r}{0.40\textwidth}  
    \centering
    \includegraphics[width=\linewidth]{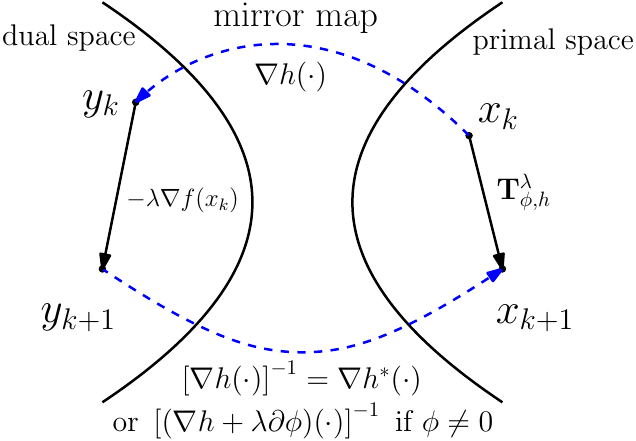}  
    \caption{Dual space interpretation}
    \label{fig:example}
\end{wrapfigure}
$\ell_2$-norm proximal term $D_h(x,x_k)$ with $h(x) = \frac{1}{2}\|x\|^2$,   we are naturally working on   $\mathbb{R}^d$
endowed with $\ell_2$-norm, which is \emph{self-dual}. Therefore, the resulting gradient descent update $x_{k+1} = x_k-\lambda\nabla f(x_k)$ can combine a vector $x_k$ in primal space with the covector $\nabla f(x_k)$ in the dual space. However, when working on non-$\ell_2$-normed spaces that are not self-dual, directly combining them could be problematic. Hence, Nemirovski and Yudin  proposed to map $x_k$ to a point $y_k=\nabla h(x_k)$ in the dual space via the mirror map $\nabla h(\cdot)$, then perform a gradient step in the dual space to obtain $y_{k+1}=y_k - \lambda\nabla f(x_k)$ and map it back to the primal space by inverting the mirror map: $x_{k+1} = \nabla h^{-1}(y_{k+1})$. According to \eqref{eqn:grad-map-2}, such a scheme is equivalent to the BPG iteration \eqref{defn:BPG-standard} with $\phi=0$, if the mirror map is chosen as the gradient of some kernel $h$, and then the inverse mirror map 

\noindent$\nabla h^{-1}(\cdot) = \nabla h^*(\cdot)$ equals the gradient of the convex conjugate of $h$. From this observation, instead of the primal gradient mapping defined on the primal iterates:
$$\cGphl(x_k) = \frac{x_k-x_{k+1}}{\lambda} = \frac{x_k-\nabla h^{*}(\nabla h(x_k)-\lambda\nabla f(x_k))}{\lambda}\neq\nabla f(x_k),$$
which suffers several nonlinear scaling issues incurred by $\nabla h$ and $\nabla h^*$, it is more natural to consider our new gradient mapping from the dual space: 
$$\cDphl(x_k) =\frac{\nabla h(x_k)-\nabla h(x_{k+1})}{\lambda}= \frac{y_k-y_{k+1}}{\lambda} = \nabla f(x_k),$$
which is \emph{invariant} w.r.t. the choice of kernel $h$ and the problem instance.

According to the above discussion, when $\phi = 0$, our new gradient mapping $\cDphl(\cdot) = \nabla\Psi(\cdot)$ exactly recovers the gradient of the objective function regardless of the kernel $h$. When $\nabla f$ is globally Lipschitz continuous s.t. a quadratic kernel $h(x) = \frac{1}{2}\|x\|^2$ is taken, then  $\nabla h(\cdot)$ reduces to the identity mapping and $\cDphl(\cdot) = G_\phi^\lambda(\cdot)$ also exactly recovers the standard proximal gradient mapping  defined in \eqref{defn:GradMap-standard}. Besides these special cases, we provide an exact characterization of the mismatch between the new dual gradient mapping and the Fr\'echet measure based on the $\kappa_h(\cdot)$ quantity defined in \eqref{defn:condition}.  

\begin{lemma}
    \label{lemma:GradMap-nonsmooth}
    Suppose $f$ and $h$ satisfy Assumption \ref{assumption:L-smad}, then for  any $x\in\RR^d$ and $\lambda>0$, we have 
    $${\mathrm{dist}^2(0, \partial \Psi(x_\lambda^+))}/{\|\cDphl(x)\|^2} \leq (1+L\lambda \kappa_h([x,x^+_\lambda]))^2,$$
    where $x_\lambda^+=\Tphl(x,\nabla f(x))$. In particular, when $\phi=0$, we have  
    $\|\nabla\Psi(x)\|^2/\|\cDphl(x)\|^2=1.$
\end{lemma}
\begin{proof}
    By \eqref{eqn:grad-map-1}, we have $\partial\Psi(x_\lambda^+) \ni  \cDphl(x) - \nabla f(x) + \nabla f(x_\lambda^+)$, which further indicates that 
    \begin{eqnarray}
         \mathrm{dist}(0,\partial\Psi(x_\lambda^+)) &\leq& 1 + \|\nabla f(x) - \nabla f(x_\lambda^+)\| \\
        & \leq & \|\cDphl(x)\| + L\cdot L_h([x,x_\lambda^+])\cdot\|x-x_\lambda^+\| \nonumber\\
        & \leq & \|\cDphl(x)\| + \frac{L\cdot L_h([x,x_\lambda^+])}{\mu_h([x,x_\lambda^+])}\cdot\|\nabla h(x) - \nabla h(x_\lambda^+)\| \nonumber\\
        & = & (1+L\lambda \kappa_h([x,x^+_\lambda]))\|\cDphl(x)\|,\nonumber
    \end{eqnarray} 
    where the second inequality is because Assumption \ref{assumption:L-smad}, which indicates $L_f([x,x_\lambda^+])\leq L\cdot L_h([x,x_\lambda^+])$, see Lemma \ref{lemma:B.1}. Then rearranging the terms and squaring both sides proves this lemma.
\end{proof}

\subsection{The kernel conditioning regularity}
\label{subsec:kernel-conditioning}
Compared to the existing stationarity measures discussed in Corollary \ref{corollary:mismatch}, the mismatch between dual gradient mapping and the standard Fr\'echet measure in Lemma \ref{lemma:GradMap-nonsmooth} no longer relies on the potentially unbounded $L_h(\cdot)$ factor. Instead, it relies on $\kappa_h(\cdot)$, which is the ratio between $L_h(\cdot)$ and $\mu_h(\cdot)$. If such a ratio can be globally upper bounded under mild conditions, then our selection of the dual gradient mapping is well justified even if $L_h(\cdot)\to+\infty$. Next, we formalize this discussion as a new kernel-conditioning (KC) regularity assumption, which has been overlooked by the existing BPG literature.

\begin{assumption}[Kernel-conditioning]
    \label{assumption:kernel-conditioning}
    For any $\delta>0$, there exists a constant $\kappa_h^\delta>0$ such that  $$\sup_{\mathcal{X}\subseteq\mathbb{R}^d}\Big\{\kappa_h(\mathcal{X}):\mathrm{diam}(\mathcal{X})\leq \delta\Big\}\leq \kappa_h^\delta,$$
where $\mathrm{diam}(\cX):=\sup\{\|x-y\|: x,y\in\cX\}$ denotes the diameter of the set $\cX$.
\end{assumption} 

Basically, Assumption \ref{assumption:kernel-conditioning} states that when the compact set $\cX$ is not very large, the localized condition number $\kappa_h(\cX)$ over $\cX$ will be uniformly bounded, even when both $\lambda_{\min}(\nabla^2h(x))$ and $\lambda_{\max}(\nabla^2h(x))$ go to $+\infty$. To the best of our knowledge, such a kernel regularity condition has not been considered in the existing works. In later discussion, if a kernel $h$ satisfies Assumption \ref{assumption:kernel-conditioning}, then we will say $h$ is KC-regular. And we will abbreviate kernel conditioning regularity as KC-regularity. As a result, Lemma \ref{lemma:GradMap-nonsmooth} immediately indicates that when $\|x-x_\lambda^+\|\leq \delta$ for some predetermined $\delta>0$, the mismatch ${\mathrm{dist}^2(0, \partial \Psi(x_\lambda^+))}/{\|\cDphl(x)\|^2} \leq (1+L\lambda \kappa_h^\delta)^2$ is at most a constant.  In addition to KC-regularity,  we also  inherits the commonly adopted global $\mu$-strong convexity regularity assumption in this paper, from the existing works \cite[etc.]{bolte2018first,davis2018stochastic,ding2023nonconvex,gao2020randomized,gao2021convergence,hanzely2021fastest,leinertial,li2019provable,mukkamala2020convex,teboulle2018simplified,zhang2018convergence}.

\begin{assumption}[Strong convexity]
    \label{assumption:SC}
    The kernel $h$ is $\mu$-strongly convex for some $\mu>0$.
\end{assumption}

In fact, the KC-regularity is a very robust property that remains stable under various common operations, we summarize this as the following closedness result.

\begin{theorem}[Closedness property]
    \label{theorem:KC-close}
    The KC-regularity is closed under scaling, positive linear combination, and  non-degenerate affine composition, in the sense that:\vspace{0.1cm} \\
    \textbf{\emph{(i).}} If a kernel $h$ is KC-regular, and the matrix $A$ has full column rank, then for any vector $b$, the kernel $h(A\cdot + b)$ is still KC-regular with constant 
    $$\kappa_{h(A\cdot+b)}^\delta \leq \kappa_A^2\cdot\kappa_h^{\|A\|\delta}$$
    where $\kappa_A$ denotes the condition number of the matrix $A$.\vspace{0.1cm}\\
    \textbf{\emph{(ii).}} If a kernel $h$ is KC-regular, then $\alpha h$ is also KC-regular with constant $\kappa_{\alpha h}^\delta = \kappa_h^\delta$ for any $\alpha>0$.\vspace{0.1cm}\\
    \textbf{\emph{(iii).}} If kernel $h$ and $g$ are both KC-regular, then their positive linear combination $\alpha h+\beta g$ is still KC-regular with constant 
    $\alpha h$ is also KC-regular with constant  
    $$\kappa_{\alpha h+\beta g}^\delta \leq \max\{\kappa_h^\delta,\kappa_g^\delta\}$$
    for any positive constants $\alpha,\beta>0$. 
\end{theorem}

\begin{proof}
    To prove (i), let us denote the new kernel as $\omega(x):= h(Ax+b)$. Then direct computation gives $\nabla^2\omega(x) = A^\top\nabla^2h(Ax+b)A$. Denote $\sigma_{\min}(A)$ and $\sigma_{\max}(A)$ the minimum and maximum singular value of $A$, respectively. As $A$ has full column rank, we know $\sigma_{\min}(A)>0$ and the matrix condition number $\kappa_A:=\sigma_{\max}(A)/\sigma_{\min}(A)<+\infty$ if finite. Then it is known that   
    $$\begin{cases}
        \lambda_{\max}\big(\nabla^2\omega(x)\big)\leq \sigma^2_{\max}(A)\cdot\lambda_{\max}\big(\nabla^2h(Ax+b)\big),\\
        \lambda_{\min}\,\big(\nabla^2\omega(x)\big)\geq \sigma^2_{\min}\,(A)\cdot\lambda_{\min}\,\big(\nabla^2h(Ax+b)\big).
    \end{cases}$$  
    Then for  $\forall\delta>0$ and any set $\cX$ with diameter $\mathrm{diam}(\cX)\leq\delta$, we define $\mathcal{Y}:=\{Ax+b:x\in\cX\}$, we have   
    \begin{eqnarray*}
        \kappa_\omega(\mathcal{X}) & = & \frac{\sup\{\lambda_{\max}(\nabla^2\omega(x)):x\in\cX\}}{\inf\{\lambda_{\min}(\nabla^2\omega(x')):x'\in\cX\}}\\
        & \leq & \frac{\sigma^2_{\max}(A)}{\sigma^2_{\min}(A)}\cdot\frac{\sup\{\lambda_{\max}(\nabla^2h(x)):y\in\mathcal{Y}\}}{\inf\{\lambda_{\min}(\nabla^2h(y')):y'\in\mathcal{Y}\}}\\
        & = & \kappa_A^2\cdot \kappa_h(\mathcal{Y}).
    \end{eqnarray*}  
    Note that $\mathrm{diam}(\mathcal{Y})\leq\|A\|\delta$ and $h$ is KC-regular, we have $\kappa_h(\mathcal{Y})\leq \kappa_h^{\|A\|\delta}$. Substituting this bound to the above inequality and taking supremum over all $\cX$ with $\mathrm{diam}(\cX)\leq \delta$ proves (i).

    The proof of (ii) is straightforward. For kernel $\omega(x) := \alpha h(x)$, we have $\nabla^2\omega(x) = \alpha\cdot\nabla^2h(x)$. Such a constant scaling of $\alpha>0$ is automatically canceled out when taking division and hence (ii)  holds.

    To prove (iii), it is sufficient to consider   $\alpha = \beta = 1$. For any set $\cX$ s.t. $\mathrm{diam}(\cX)\leq \delta$, we have 
    \begin{eqnarray*}
        \kappa_{h+g}(\cX) &:=&  \frac{\sup\{\lambda_{\max}(\nabla^2h(x)+\nabla^2g(x)):x\in\cX\}}{\inf\{\lambda_{\min}(\nabla^2h(x)+\nabla^2g(x)):x'\in\cX\}}\\
        & \leq & \frac{\sup\{\lambda_{\max}(\nabla^2h(x))):x\in\cX\}+\sup\{\lambda_{\max}(\nabla^2g(x)):x\in\cX\}}{\inf\{\lambda_{\min}(\nabla^2h(x)):x'\in\cX\}+\inf\{\lambda_{\min}(\nabla^2g(x)):x'\in\cX\}}\\
        & = & \frac{L_h(\cX)+L_g(\cX)}{\mu_h(\cX)+\mu_g(\cX)}. 
    \end{eqnarray*}
    In the above inequalities,  the second line is due to the fact that  for any positive definite matrices $A,B\succ0$, it holds that $\lambda_{\max}(A+B)\leq\lambda_{\max}(A)+\lambda_{\max}(B)$ and $\lambda_{\min}(A+B)\geq \lambda_{\min}(A)+\lambda_{\min}(B).$ 
    Then note that for any $a,b,c,d>0$, let us assume w.l.o.g. that $\frac{a}{b}\geq\frac{c}{d}$. Then direct computation gives
    $\frac{a}{b}-\frac{a+c}{b+d} = \frac{d}{b+d} \left(\frac{a}{b}-\frac{c}{d}\right)\geq0.$ That is, $\frac{a+c}{b+d}\leq \max\big\{\frac{a}{b},\frac{c}{d} \big\}$ always hold. Consequently
    $$\kappa_{h+g}(\cX) \leq \frac{L_h(\cX)+L_g(\cX)}{\mu_h(\cX)+\mu_g(\cX)} \leq \max\left\{\frac{L_h(\cX)}{\mu_h(\cX)},\frac{L_g(\cX)}{\mu_g(\cX)}\right\} \leq \max\Big\{\kappa_{h}^\delta,\kappa_{g}^\delta\Big\}.$$ 
    Then taking supremum over all $\cX$ with $\mathrm{diam}(\cX)\leq \delta$ proves (iii) when $\alpha=\beta=1$. For general $\alpha,\beta>0$, it is sufficient to combine this result with (ii) and obtain 
    $$\kappa_{\alpha h+\beta g}^\delta\leq\max\Big\{\kappa_{\alpha h}^\delta,\kappa_{\beta g}^\delta\Big\} = \max\Big\{\kappa_{h}^\delta,\kappa_{g}^\delta\Big\}.$$
    Hence we complete the proof of Theorem \ref{theorem:KC-close}.
\end{proof}

Theorem \ref{theorem:KC-close} indicates that starting from simple KC-regular kernels, one can construct appropriate new KC-regular kernels or verify KC-regularity for related kernels, by rotation, distortion, translation, or taking combinations. As a detailed example, we show that the power kernels satisfy this condition. 

\begin{proposition}
    \label{proposition: Poly-kernel-condition}
    Let $h(x)=\frac{\alpha}{2}\|x\|^2+\frac{1}{r+2}\|x\|^{r+2}$ be a power kernel for some real number $r\geq0$ and $\alpha>0$. Then this kernel satisfies the following properties: \vspace{0.1cm}\\
    \noindent\textbf{\emph{(i)}.} For any set $\mathcal{X}\subseteq\RR^d$, the local condition number satisfies $\kappa_h(\cX)\leq3r+4$ as long as 
    $$\mathrm{diam}(\cX)\leq \frac{1}{r}\cdot\max\Big\{\alpha^{1/r}\,,\,\min_{u\in\cX}\|u\|\Big\}.$$ 
    \textbf{\emph{(ii)}.} For any positive $\delta>0$, the kernel $h$ satisfies
    $$\kappa_h^\delta\leq \begin{cases}
        (r+1)\max\big\{1,\frac{\delta^r}{\alpha}\big\}+1,&\mbox{ if } r\leq 1\\
        (r+1)\big(1+\big(\frac{\delta^r}{\alpha}\big)^{\frac{1}{r-1}}\big)^{r-1} + 1, &\mbox{ if } r>1
    \end{cases}$$
    In particular, we have $\kappa_h^\delta\leq r+2$ for any $\delta\leq \alpha^\frac{1}{r}$ when $r\in(0,1]$, and we have $\kappa_h^\delta \leq 3r+4$ for all $\delta\leq \alpha^\frac{1}{r}/r$ when $r>1$. They provide uniform upper bounds for $\kappa_h(\cX)$ whenever $\mathrm{diam}(\cX)\leq\delta$. 
\end{proposition}
The verification of this proposition only consists elementary computation, and is moved to Appendix \ref{appdx:proposition: Poly-kernel-condition} for succinctness. For general composition of norm kernel $h(x): = H(\|x\|)$, see \cite{bauschke2017descent}, KC-regularity still applies if $H(\cdot)$ has desirable properties. Due to the closedness of KC-regularity, the kernel $h(x) = {\|x\|^2_A}/{2} + {\|x\|^{\alpha}_B}/{\alpha}$ is KC-regular, where $A,B$ are positive definite matrices and $\alpha>2$. Such a kernel has been applied to solving the subproblems of higher-order methods with H\"{o}lder continuity \cite{grapiglia2020tensor}.

We also note that many popular kernels possess a block-separable structure. That is,  variable $x$ can be partitioned to multiple blocks $x_1,\cdots,x_m$ s.t. $h(x) = \sum_{i=1}^mh_i(x_i)$. For example, \cite{ding2023nonconvex} considered a multi-block  polynomial kernel for neural networks, where each block corresponds to the network parameters in one layer. In most cases, each block $x_i$ is a single variable and the kernel $h$ is element-wisely separable, which is satisfied by most examples in \cite{bauschke2017descent}, where $h_i$ can be exponential, various types of entropy, Hellinger, as well as their regularized variants. For such block-separable kernels, it is natural to consider a block-separable variant of KC-regularity, we shall discuss this extension in Appendix \ref{section:multi-blocks}. At this moment, we focus on the basic single-block discussion to avoid the notational sophistication.

\subsection{A Lipschitz-like bound for gradient difference}
As a final preparation in this section, we would like to derive a Lipschitz-like bound for gradient differences guaranteed under KC-regularity. As discussed in the introduction, popular acceleration techniques for stochastic approximation methods like momentum, shuffling, and variance reduction, 
all rely on bounding stochastic errors by gradient differences, and then controls gradient differences by iterative descents through gradient Lipschitz property. 

In particular, for variance reduction, the fundamental logic is based on a simple insight that \emph{the gradient difference is easier to estimate than the gradient itself} for Lipschitz-smooth function. Roughly speaking, suppose $f(x) = \mathbb{E}_\xi[f_\xi(x)]$, where $f_\xi$ is $L$-smooth. Then the mean squared error (MSE) for a stochastic gradient estimator will be 
$\mathbb{E}[\|\nabla f_\xi(x) - \nabla f(x)\|^2]$, which is often upper bounded by some positive \emph{constant} through a bounded variance assumption. On the other hand, given a reference point $x_{\text{ref}}$, the MSE for estimating $\Delta:= \nabla f(x)-\nabla f(x_{\text{ref}})$ by $\Delta_{\xi} := \nabla f_\xi(x)-\nabla f_\xi(x_{\text{ref}})$ satisfies
$$\EE\left[\|\Delta-\Delta_{\xi}\|^2\right] \leq \EE\left[\|\Delta_{\xi}\|^2\right] = \EE\left[\|\nabla f_\xi(x)-\nabla f_\xi(x_{\text{ref}})\|^2\right] \leq L^2\|x-x_{\text{ref}}\|^2.$$
Even with a single sample $\xi$, the above MSE will automatically go to $0$ if $\|x-x_{\text{ref}}\|\to0$.  
For the variance reduced methods whose design ensures this to happen, 
given an accurate enough estimator $\tilde{\nabla}f(x_{\text{ref}})\approx \nabla f(x_{\text{ref}})$,  then $\tilde{\nabla}f(x_{\text{ref}}) +\Delta_{\xi}$ will give a much more accurate estimation of $\nabla f(x)$. This forms the basic insight why variance reduction accelerates SGD under classic $L$-smoothness condition.

Although for stochastic BPG method, the smooth-adaptability alone is not enough to ensure a global Lipschitz-like bound for gradient differences, fortunately,  KC-regularity  provides a remedy.

\begin{proposition}[Lipschitz-like bound] 
    \label{proposition:Lipschitz} 
    Suppose $f$ is $L$-smooth adaptable to some KC-regular kernel $h$. Let $\delta>0$ and let $\cX$ be any convex set with $\mathrm{diam}(\cX)\leq \delta$. Then for any $x,y\in\cX$, and an arbitrary interpolation point $z_\theta := \theta x + (1-\theta)y$ with $ \theta\in[0,1]$, we have  
    \begin{equation}
        \label{prop:Lipschitz}
        \frac{\|\nabla f(z_\theta)-\nabla f(y)\|^2}{2L^2\mu_h(\cX)}\leq  \theta^2\kappa_h^2(\cX) D_h(x,y),
    \end{equation}
    where by KC-regularity, we know the constant upper bound $\kappa_h(\cX)\leq \kappa_h^\delta$ always hold. 
\end{proposition}
The proof of this proposition is very simple, and is moved to Appendix \ref{appdx:proposition:Lipschitz}. We introduce the interpolation point $z_\theta$ in the proposition mainly for the ease of reference in the latter analysis as we study a variance reduction scheme with interpolation. However, when we set $\theta = 1$ so that $z_\theta = x$, it actually implies a more  interesting insight. Note that $D_h(x,y)\leq{\|\nabla h(x)-\nabla h(y)\|^2}/{2\mu_h(\cX)}$ always holds, substituting it to \eqref{prop:Lipschitz} yields 
\begin{equation}
    \label{key:dual-Lipschitz}
    \|\nabla f(x)-\nabla f(y)\| \leq L\kappa_h^\delta \cdot \|\nabla h(x)-\nabla h(y)\|\qquad\mbox{for}\qquad \forall x,y\in\cX.
\end{equation}
Because the mirror map $\nabla h$ is strictly monotone, it is straightforward to verify that the function defined by $\rho(x,y):=\|\nabla h(x)-\nabla h(y)\|$ satisfies
$$\mbox{(positive definiteness)}\qquad \rho(x,y)\geq 0 \mbox{ and } \rho(x,y)=0 \Longleftrightarrow x=y,\qquad\qquad\qquad\qquad$$
$$\mbox{(symmetry)}\qquad \rho(x,y) = \rho(y,x),\qquad\qquad\qquad\qquad\qquad\qquad\quad$$
$$\mbox{(triangle inequality)}\qquad \rho(x,y) \leq  \rho(x,z) + \rho(z,y),\qquad\qquad\qquad\qquad\qquad\!\!\quad\qquad$$
and is hence a distance metric. Therefore, \eqref{key:dual-Lipschitz} states that although $\nabla f$ is not Lipschitz continuous under the standard $\ell_2$ distance. It is locally but uniformly $L\kappa_h^\delta$-Lipschitz under the distance induced by the mirror map $\nabla h$ in the dual space. By local we mean the Lipschitz bound holds only locally for points inside a  $\delta$-bounded area. However, this bound is also uniform, or global, in the sense that the local Lipschitz constant ($L\kappa_h^\delta$) remains the same throughout the whole space under KC-regularity. This provides a brand-new geometric insight on what $L$-smooth adaptable to a kernel means. Nevertheless, as \eqref{prop:Lipschitz} directly relates gradient differences with Bregman divergence, which further relates to the iterative descents of the algorithms, we will mostly use the bound in Proposition \ref{proposition:Lipschitz}.

Finally, we would like to discuss \cite[Assumption 3]{ding2023nonconvex}, where the authors directly assume the existence of some constant $c$ s.t. $\|\nabla f(x)-\nabla f(y)\|^2\leq c\cdot D_h(y,x)$, which is, to some degree, similar to \eqref{prop:Lipschitz}. Note that  
$\|\nabla f(x)-\nabla f(y)\|^2\leq L_f^2(\cX)\|x-y\|^2$, while $D_h(y,x)\geq\frac{\mu_h(\cX)}{2}\|x-y\|^2$. One may require $\mu_h(\cX)\geq \mathrm{const}\cdot L_f^2(\cX)$ to guarantee the validity of \cite[Assumption 3]{ding2023nonconvex}. This is much stronger than the usual smooth-adaptability assumption the only implies $L_h(\cX)\geq\mathrm{const}\cdot L_f(\cX)$. In case $f$ is smooth adaptable to an $r$-degree polynomial kernel,  then this assumption may require one use a $2r$-degree polynomial kernel.   Indeed, \cite{ding2023nonconvex} justifies this assumption by considering $f(x) = \frac{x^4}{4}$ and $h(x) = \frac{x^2}{2}+\frac{x^8}{8}$, while $f$ is already 1-smooth adaptable to $h(x) = \frac{x^2}{2}+\frac{x^4}{4}.$ However, according to our observation in Example \ref{example:counterexample-1}, unnecessarily doubling the degree can significantly slow down the actual convergence rate of the algorithm, which can also be clearly explained by our instance-free worst-case complexity in latter sections.

\section{Improving the instance-dependent complexity}\label{section:VR}

In this section, we illustrate how the KC-regularity and the resulting Lipschitz-like bound improves the instance-dependent complexity of stochastic BPG method from $O(\epsilon^{-2})$ to $O(\sqrt{n}\epsilon^{-1})$. In particular, combined with a novel probabilistic argument, we provide a simple epoch bound mechanism that can facilitate most episodic stochastic variance reduction techniques such as SVRG \cite{johnson2013accelerating}, SPIDER \cite{fang2018spider}, SARAH and ProxSARAH  \cite{pham2020proxsarah}, etc. To avoid repetition, we only discuss the sample complexity for reducing squared primal gradient mapping, while the proposed technique can be easily extended to bounding Bregman residual and its symmetrized variant discussed in Corollary \ref{corollary:mismatch}. 

\subsection{The general algorithm and analysis framework}
Consider problem \eqref{prob:main-det} with $f(x) = \frac{1}{n}\sum_{i=1}^{n}f_i(x).$ We adopt the following variant of Assumption \ref{assumption:L-smad}. 
\begin{assumption}
    \label{assumption:L-smad-finite-sum}
    For each $i\in[n]$, $f_i$ is $L_i$-smooth adaptable to $h$ for some positive constant $L_{i}>0$. Denote $L:=\sqrt{\frac{1}{n}\sum_i^nL_i^2}$, then $f$ is $L$-smooth adaptable to $h$. 
\end{assumption}
Throughout Section \ref{section:VR}, we will use Assumption \ref{assumption:L-smad-finite-sum}, Assumption \ref{assumption:kernel-conditioning}, and Assumption \ref{assumption:SC}, and we propose a stochastic variance reduced BPG method with epoch-wise bounds  in Algorithm \ref{algorithm:proxSARAH}. 
 
\begin{algorithm2e}
\caption{Stochastic variance reduced BPG method with epoch bounds} 
\label{algorithm:proxSARAH}
\textbf{Input:} Initial point $x_{1,0}$, constant $\delta$, epoch length $\tau$, step size $\eta$,  interpolation factor $\gamma\in(0,1]$.   \\
\For{$s=1,2,3,\cdots,S$}{
    Construct a convex set $\mathcal{X}_s\supseteq {B}(x_{s,0},\delta/2)$ such that $\kappa_h(\mathcal{X}_s)\leq \kappa_h^\delta$\,.  \\\textcolor{gray}{//**Proposition \ref{proposition: Poly-kernel-condition} suggests $\mathcal{X}_s= B\left(x_{s,0},\max\left\{\frac{1}{2r},\frac{\|x_{s,0}\|}{2r+1}\right\}\right)$ for polynomial kernel**//}
    
    \For{$k = 0,1,2,\cdots,\tau-1$}{  
    If $k==0$, compute $v_{s,0}= \nabla f(x_{s,0}) = \frac{1}{n}\sum_{i=1}^n\nabla f_i(x_{s,0})$.\\
    If $k\geq 1$, uniformly sample a mini-batch $\mathcal{B}_{s,k}\subseteq[n]$  with replacement, compute  
    \begin{equation}
        \label{alg:sarah-small}
        v_{s,k} = v_{s,k-1} + \frac{1}{|\mathcal{B}_{s,k}|}\sum_{\xi\in\mathcal{B}_{s,k}} \Big(\nabla f_\xi(x_{s,k})-\nabla f_\xi(x_{s,k-1})\Big)\,.
    \end{equation}\\
    
    Denote $\mathrm{id}_{\cX_s}$ the indicator function of $\cX_s$. Compute the BPG update with

    \begin{equation}
        \label{alg:proxSARAH-update}
        \bar{x}_{s,k+1} =  \Tshe(x_{s,k},v_{s,k})\qquad\mbox{and}\qquad x_{s,k+1} = (1-\gamma)x_{s,k}+\gamma\bar{x}_{s,k+1}\vspace{-0.1cm}
    \end{equation}  \\ 
    \textbf{if}\,\,\,$\mathrm{dist}(x_{s,k+1},\partial \cX_s) \leq \delta/4$\,\,\,\textbf{then}\,\,\,\textbf{break} the inner forloop.
    } 
    Set $\tau_s=k+1$ and $x_{s+1,0} = x_{s,\tau_s}$. 
    } 
\end{algorithm2e}
 
In each epoch of this algorithm, based on a predetermined radius $\delta$ defined in KC-regularity (Assumption \ref{assumption:kernel-conditioning}), we impose an additional convex set constraint $x\in\cX_s$ in which the kernel $h$ has limited condition number.  With this simple mechanism, one can input any episodic variance reduced gradient estimator in place of \eqref{alg:sarah-small}. In this paper, we use the SARAH/SPIDER estimator. In particular, the update of $\bar{x}_{s,k+1}$ in \eqref{alg:proxSARAH-update} incorporates an indicator function $\mathrm{id}_{\cX_s}$, that is, 
\begin{equation} 
        \label{alg:sarah-update}
        \bar{x}_{s,k+1} = \argmin_{x\in\mathcal{X}_s}\,\, \langle v_{s,k}, x\rangle + \phi(x) +  \frac{1}{\eta}D_h(x,x_{s,k}) . 
    \end{equation} 
The purpose for adopting such an indicator function is to guarantee that the whole epoch $\{x_{s,k}\}_{k=0}^{\tau_s}$ stays inside $\cX_s$ so that KC-regularity can help us to bound the gradient estimation errors. However, we should also notice that if ${x}_{s,k}$ is too close to the boundary $\partial \cX_s$ and $\bar{x}_{s,k+1}$ hits $\partial \cX_s$, then the next point $x_{s,k+1}$ will have to take a very conservative step, which could have been a more aggressive step. Then Line 9 provides an early stop scheme for each epoch to prevent such cases. If one removes Line 9 and let every epoch to run full $\tau$ iterations, the algorithm   still works. But if some ${x}_{s,k}$ is close to $\partial \cX_s$ and is forced to take a conservative step in early stage of an epoch, it is quite possible that the future iterations in epoch $s$ will also suffer the same issue, causing a waste of computation. 

Define the true \emph{restricted} primal gradient mapping in epoch $s$ as  
\begin{equation}
    \label{defn:restricted-grd-mapping}
    \cGshe(x_{s,k}):=\frac{x_{s,k}-\hat{x}_{s,k+1}}{\eta}\quad\mbox{with}\quad\hat{x}_{s,k+1}:=\Tshe\big(x_{s,k},\nabla f(x_{s,k})\big),
\end{equation} 
where by ``restricted'' we means this primal gradient mapping incorporates the indicator function of the set constraint $x\in\cX_s$.
Compared to the $\bar{x}_{s,k+1}$ in Algorithm \ref{algorithm:proxSARAH}, $\hat{x}_{s,k+1}$ is constructed with the true gradient $\nabla f(x_{s,k})$.  For this restricted primal gradient mapping, the following lemma holds true. 
\begin{lemma}
    \label{lemma:restricted-grd-mapping} 
    Let $\cGshe(x_{s,k})$, $\hat{x}_{s,k+1}$ and $\bar{x}_{s,k+1}$ be defined by \eqref{defn:restricted-grd-mapping} and \eqref{alg:sarah-update}, respectively. Then 
    \begin{eqnarray}
    \label{lm:grad-mapping} 
        \big\|\cGshe(x_{s,k})\big\|^2 \leq \frac{2\|x_{s,k}-\bar{x}_{s,k+1}\|^2}{\eta^2} + \frac{2\|\mathcal{E}_{s,k}\|^2}{\mu_h^2(\mathcal{X}_s)},
    \end{eqnarray} 
    where $\mathcal{E}_{s,k} = \nabla f(x_{s,k})-v_{s,k}$ denotes the gradient estimation error at $x_{s,k}$.
\end{lemma}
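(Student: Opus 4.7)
The plan is to bound $\|\cGshl(x_{s,k})\|^2 = \lambda^{-2}\|x_{s,k}-\hat{x}_{s,k+1}\|^2$ by splitting the displacement $x_{s,k}-\hat{x}_{s,k+1}$ through the actual iterate $x_{s,k+1}$ via the elementary inequality $\|a+b\|^2 \leq 2\|a\|^2+2\|b\|^2$. This immediately reduces the lemma to controlling $\|x_{s,k+1}-\hat{x}_{s,k+1}\|^2$, which is the squared distance between the outputs of the Bregman proximal mapping $\Tshl(x_{s,k},\cdot)$ evaluated at two different direction vectors ($v_{s,k}$ versus $\nabla f(x_{s,k})$). So the key lemma to establish is a non-expansiveness-type bound on $v \mapsto \Tshl(x_{s,k},v)$ in the $\mathcal{X}_s$-conditioned geometry.

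To get this bound, I would write down the first-order optimality conditions for the two proximal subproblems defining $x_{s,k+1}$ and $\hat{x}_{s,k+1}$, exactly as in equation \eqref{lm:grd-vs-grdmap-1} of Lemma \ref{lemma:grd-vs-grdmap}. Since the objective involves $\phi + \mathrm{id}_{\cX_s}$, there exist subgradients $u_1 \in \partial(\phi+\mathrm{id}_{\cX_s})(x_{s,k+1})$ and $u_2 \in \partial(\phi+\mathrm{id}_{\cX_s})(\hat{x}_{s,k+1})$ such that
\begin{equation*}
\lambda v_{s,k} + \lambda u_1 + \nabla h(x_{s,k+1}) - \nabla h(x_{s,k}) = 0,\qquad \lambda \nabla f(x_{s,k}) + \lambda u_2 + \nabla h(\hat{x}_{s,k+1}) - \nabla h(x_{s,k}) = 0.
\end{equation*}
Subtracting the two identities and taking the inner product with $x_{s,k+1} - \hat{x}_{s,k+1}$ yields three terms: the monotonicity term $\langle u_1-u_2, x_{s,k+1}-\hat{x}_{s,k+1}\rangle \geq 0$ from convexity of $\phi+\mathrm{id}_{\cX_s}$, the co-coercivity term $\langle \nabla h(x_{s,k+1}) - \nabla h(\hat{x}_{s,k+1}), x_{s,k+1} - \hat{x}_{s,k+1}\rangle$, and the perturbation term involving $v_{s,k} - \nabla f(x_{s,k})$.

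Since both $x_{s,k+1}$ and $\hat{x}_{s,k+1}$ lie in $\mathcal{X}_s$ by construction of the subproblems, the strong convexity of $h$ restricted to $\mathcal{X}_s$ lower bounds the co-coercivity term by $\mu_h(\mathcal{X}_s)\|x_{s,k+1}-\hat{x}_{s,k+1}\|^2$. Combined with Cauchy–Schwarz on the perturbation term and cancellation of one factor of $\|x_{s,k+1}-\hat{x}_{s,k+1}\|$, this gives $\|x_{s,k+1}-\hat{x}_{s,k+1}\| \leq \lambda\mu_h(\mathcal{X}_s)^{-1}\|v_{s,k}-\nabla f(x_{s,k})\|$. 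Plugging this back into the split from the first paragraph and dividing by $\lambda^2$ produces exactly \eqref{lm:grad-mapping}.

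I do not expect any genuine obstacle here: the argument is a two-point stability bound for a strongly convex Bregman prox, and the only subtle point is remembering that the $\mathcal{X}_s$-constraint enters both the feasibility (both iterates live in $\mathcal{X}_s$, so the local modulus $\mu_h(\mathcal{X}_s)$ is applicable in the co-coercivity step) and the subdifferential (so monotonicity absorbs the indicator contribution automatically). The factor of $2$ in the final bound is simply the price of using $\|a+b\|^2 \leq 2\|a\|^2 + 2\|b\|^2$ rather than a tighter Young-type split, which would only buy a constant.
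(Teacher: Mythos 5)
Your proof is correct and takes essentially the same approach as the paper: both bound $\|x_{s,k+1}-\hat{x}_{s,k+1}\|$ by a non-expansiveness argument for the Bregman prox and then split via $\|a+b\|^2 \le 2\|a\|^2 + 2\|b\|^2$. The only difference is that the paper packages the comparison of the two subproblem optima through Tseng's three-point property, whereas you subtract the first-order optimality conditions directly and invoke subgradient monotonicity together with the strong convexity of $h$ on $\cX_s$; these manipulations produce the identical intermediate bound $\|x_{s,k+1}-\hat{x}_{s,k+1}\| \le \lambda\,\mu_h(\cX_s)^{-1}\|v_{s,k}-\nabla f(x_{s,k})\|$.
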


\noindent As the proof of this lemma is very standard, it is relegated to Appendix \ref{appendix:sec:VR}. However, we should also bear in mind  that $\cGshe(\cdot)\neq\cGphe(\cdot)$ due to constraint $x\in\cX_s$ in each epoch. It is not the primal gradient mapping that we aim to bound eventually. Given this lemma, we can obtain the following descent result. Different from the standard descent result for stochastic BPG methods such as \cite{ding2023nonconvex}, we need to keep the descent both in terms of the true restricted primal gradient mapping $\|\cGshe(x_{s,k})\|^2$ and the Bregman divergence term $D_h(x_{s,k+1},x_{s,k})$.

\begin{lemma}
\label{lemma:descent-proxSARAH-finite}
    Let $\{x_{s,k}\}_{k=0}^{\tau_s}$ be the $s$-th epoch of Algorithm \ref{algorithm:proxSARAH}, then  we have 
    \begin{eqnarray}
    \Psi(x_{s,k+1})\leq \Psi(x_{s,k})-\frac{\gamma\eta\mu_h(\mathcal{X}_s)}{8}\big\|\cGshe(x_{s,k})\big\|^2\!-\!\bigg(\frac{\gamma}{\eta}- L\kappa_h^\delta\gamma^2\!\bigg)\!D_h(\bar{x}_{s,k+1},x_{s,k}) \!+\!\frac{5\gamma\eta\|\mathcal{E}_{s,k}\|^2}{4\mu_h(\mathcal{X}_s)}.\,\nonumber
    \end{eqnarray} 
\end{lemma}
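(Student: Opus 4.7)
The plan is to combine the extended descent lemma with the first-order optimality of the Bregman prox step, using Assumption \ref{assumption:kernel-conditioning} to absorb the mismatch created by the interpolation $x_{s,k+1} = (1-\gamma)x_{s,k} + \gamma\bar{x}_{s,k+1}$. Concretely, I would first apply Lemma \ref{lemma:descent-lemma} to the pair $(x_{s,k+1}, x_{s,k})$ and add the convex-combination bound $\phi(x_{s,k+1}) \leq (1-\gamma)\phi(x_{s,k}) + \gamma\phi(\bar{x}_{s,k+1})$. Since $x_{s,k+1}-x_{s,k} = \gamma(\bar{x}_{s,k+1}-x_{s,k})$, this produces an inequality in which the linear term splits as $\gamma\langle v_{s,k}, \bar{x}_{s,k+1}-x_{s,k}\rangle + \gamma\langle \mathcal{E}_{s,k}, \bar{x}_{s,k+1}-x_{s,k}\rangle$.

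The step I expect to be the main obstacle is converting the $L\,D_h(x_{s,k+1}, x_{s,k})$ surplus from the descent lemma into a quantity controlled by $D_h(\bar{x}_{s,k+1}, x_{s,k})$. Here I would use that $\mathcal{X}_s\supseteq B(x_{s,0},\delta/2)$ is convex of diameter at most $\delta$, so both $x_{s,k+1}$ and $\bar{x}_{s,k+1}$ lie in $\mathcal{X}_s$. The $L_h(\mathcal{X}_s)$-smoothness of $h$ on $\mathcal{X}_s$ gives $D_h(x_{s,k+1}, x_{s,k}) \leq \frac{L_h(\mathcal{X}_s)}{2}\|x_{s,k+1}-x_{s,k}\|^2 = \frac{L_h(\mathcal{X}_s)\gamma^2}{2}\|\bar{x}_{s,k+1}-x_{s,k}\|^2$, and strong convexity combined with $\kappa_h(\mathcal{X}_s)\leq \kappa_h^\delta$ finally yields $D_h(x_{s,k+1},x_{s,k}) \leq \kappa_h^\delta\gamma^2\,D_h(\bar{x}_{s,k+1}, x_{s,k})$, which is exactly where the $L\kappa_h^\delta\gamma^2$ coefficient originates. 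Without Assumption \ref{assumption:kernel-conditioning} this reduction would cost a factor blowing up with $\|x_{s,k}\|$, which would make the descent vacuous.

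Next, I would exploit the first-order optimality of $\bar{x}_{s,k+1}$ as the $\mathrm{argmin}$ of the $\eta$-Bregman subproblem restricted to $\mathcal{X}_s$. The standard three-point identity $\nabla h(\bar{x}_{s,k+1})-\nabla h(x_{s,k}) = -\eta(v_{s,k}+u)$ for some $u\in\partial\phi(\bar{x}_{s,k+1})$, paired with convexity of $\phi$, gives $\langle v_{s,k}, \bar{x}_{s,k+1}-x_{s,k}\rangle + \phi(\bar{x}_{s,k+1}) - \phi(x_{s,k}) \leq -\frac{1}{\eta}D_h(\bar{x}_{s,k+1}, x_{s,k})$. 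Multiplying by $\gamma$ and combining with the previous paragraph produces a net coefficient of $-\bigl(\frac{\gamma}{\eta} - L\kappa_h^\delta\gamma^2\bigr)$ in front of $D_h(\bar{x}_{s,k+1},x_{s,k})$. The noise cross-term $\gamma\langle\mathcal{E}_{s,k},\bar{x}_{s,k+1}-x_{s,k}\rangle$ is dispatched by Young's inequality and the $\mu_h(\mathcal{X}_s)$-strong convexity of $h$ on $\mathcal{X}_s$: it splits into a term of the form $c_1\gamma\eta\|\mathcal{E}_{s,k}\|^2/\mu_h(\mathcal{X}_s)$ plus a small multiple of $D_h(\bar{x}_{s,k+1},x_{s,k})$, which I would keep small enough to be absorbed later.

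Finally, to manifest the $\tfrac{\gamma\eta\mu_h(\mathcal{X}_s)}{8}\|\cGshe(x_{s,k})\|^2$ term, I invoke the $\eta$-analog of Lemma \ref{lemma:restricted-grd-mapping} that bounds $\|\cGshe(x_{s,k})\|^2 \leq \frac{2\|\bar{x}_{s,k+1}-x_{s,k}\|^2}{\eta^2} + \frac{2\|\mathcal{E}_{s,k}\|^2}{\mu_h^2(\mathcal{X}_s)}$; sacrificing a small amount of the $D_h(\bar{x}_{s,k+1},x_{s,k})$ decrease via $\|\bar{x}_{s,k+1}-x_{s,k}\|^2 \leq 2D_h(\bar{x}_{s,k+1},x_{s,k})/\mu_h(\mathcal{X}_s)$ and consolidating all $\|\mathcal{E}_{s,k}\|^2$ contributions so they sum to $\tfrac{5\gamma\eta}{4\mu_h(\mathcal{X}_s)}$ yields the claimed inequality. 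The chief technical nuisance is the constant bookkeeping: the interpolation parameter $\gamma$ multiplies every piece, and the slices of $D_h(\bar{x}_{s,k+1},x_{s,k})$ sacrificed to the Young step and to the gradient-mapping extraction must be chosen so the surviving coefficient is exactly $\tfrac{\gamma}{\eta}-L\kappa_h^\delta\gamma^2$, mirroring the tuning already carried out for Lemma \ref{lemma:descent-sarah-finite} but with the extra $\gamma$-factors that the interpolation introduces.
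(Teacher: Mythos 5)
Your overall plan mirrors the paper's: start from the extended descent lemma applied across the interpolation, use convexity of $\phi$, apply kernel conditioning to replace $L\,D_h(x_{s,k+1},x_{s,k})$ by $L\kappa_h^\delta\gamma^2 D_h(\bar{x}_{s,k+1},x_{s,k})$, split $\nabla f(x_{s,k})=v_{s,k}+\mathcal{E}_{s,k}$, handle the $v_{s,k}$-part through the optimality of $\bar{x}_{s,k+1}$, dispatch the $\mathcal{E}_{s,k}$ cross-term by Young's inequality, and finish by extracting $\|\cGshe(x_{s,k})\|^2$ via the $\eta$-analog of Lemma~\ref{lemma:restricted-grd-mapping}. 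The paper invokes Tseng's three-point property (Lemma~\ref{lemma:Tseng-3point}) where you instead combine the first-order optimality condition $\nabla h(\bar{x}_{s,k+1})-\nabla h(x_{s,k})=-\eta(v_{s,k}+u)$ with convexity of $\phi$; these are two routes to the same inequality.

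There is, however, a genuine gap in your bookkeeping. Your stated consequence of optimality,
$\langle v_{s,k},\bar{x}_{s,k+1}-x_{s,k}\rangle+\phi(\bar{x}_{s,k+1})-\phi(x_{s,k})\le -\tfrac{1}{\eta}D_h(\bar{x}_{s,k+1},x_{s,k})$,
discards a whole second Bregman term. Tracing your own argument, $\langle v_{s,k}+u,\bar{x}_{s,k+1}-x_{s,k}\rangle=-\tfrac{1}{\eta}\langle\nabla h(\bar{x}_{s,k+1})-\nabla h(x_{s,k}),\bar{x}_{s,k+1}-x_{s,k}\rangle=-\tfrac{1}{\eta}\bigl(D_h(\bar{x}_{s,k+1},x_{s,k})+D_h(x_{s,k},\bar{x}_{s,k+1})\bigr)$, so the true bound carries an extra $-\tfrac{1}{\eta}D_h(x_{s,k},\bar{x}_{s,k+1})$. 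That term is exactly what the paper uses as the absorber: the Young step costs $\tfrac{\gamma\mu_h(\mathcal{X}_s)}{4\eta}\|\bar{x}_{s,k+1}-x_{s,k}\|^2\le\tfrac{\gamma}{2\eta}D_h(x_{s,k},\bar{x}_{s,k+1})$ and the gradient-mapping extraction costs another $\tfrac{\gamma}{2\eta}D_h(x_{s,k},\bar{x}_{s,k+1})$; together they exactly cancel the $-\tfrac{\gamma}{\eta}D_h(x_{s,k},\bar{x}_{s,k+1})$ term, leaving the coefficient $\tfrac{\gamma}{\eta}-L\kappa_h^\delta\gamma^2$ on $D_h(\bar{x}_{s,k+1},x_{s,k})$ untouched and the error budget summing to $\tfrac{5\gamma\eta\|\mathcal{E}_{s,k}\|^2}{4\mu_h(\mathcal{X}_s)}$. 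Your proposal absorbs both quadratics into $D_h(\bar{x}_{s,k+1},x_{s,k})$ instead, which would subtract an extra $\tfrac{\gamma}{\eta}$ (or any positive multiple you choose, with a compensating blowup in the $\|\mathcal{E}_{s,k}\|^2$ coefficient) from $\tfrac{\gamma}{\eta}-L\kappa_h^\delta\gamma^2$, so the coefficient you claim to land on cannot survive. Keeping both Bregman terms from the optimality step, as Tseng's three-point property makes explicit, repairs the constant accounting.
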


\noindent The proof of Lemma \ref{lemma:descent-proxSARAH-finite} is moved to Appendix \ref{appendix:sec:VR}. Next, we bound the error term $\|\mathcal{E}_{s,k}\|^2$, whose proof is kept in the main paper to illustrate how KC-regularity affects the variance bounds.

\begin{lemma}
    \label{lemma:err-sarah} 
    Let $\{x_{s,k}\}_{k=0}^{\tau_s}$ be the $s$-th epoch of Algorithm \ref{algorithm:proxSARAH}. Given any batch size $b>0$, if we set $|\mathcal{B}_{s,k}|=b$ for $k=1,\cdots,\tau_s-1$. Then conditioning on the initial point $x_{s,0}$ of the epoch, we have
    \begin{eqnarray}
    \label{lm:err-sarah-0}
    \mathbb{E}\left[\frac{\|\mathcal{E}_{s,k}\|^2}{\mu_h(\mathcal{X}_s)}\,\Big|\, x_{s,0}\right] \leq  \frac{2\gamma^2L^2(\kappa_h^\delta)^2}{b}\mathbb{E}\bigg[\sum_{j=0}^{k-1} D_h(\bar{x}_{s,j+1},x_{s,j})\,\big|\,x_{s,0}\bigg]\,, 
\end{eqnarray}
as long as $h$ satisfies the kernel conditioning regularity assumption in $\cX_s$. 
\end{lemma}
\begin{proof}
    By \cite[Lemma 2]{pham2020proxsarah}, it is not hard to obtain that 
    \begin{equation}
        \label{lm:err-sarah-0.5}
        \mathbb{E}\Big[\|\mathcal{E}_{s,k}\|^2\,\big|\,x_{s,0}\Big]  \leq  \sum_{j=0}^{k-1} \mathbb{E}\bigg[\frac{1}{bn}\sum_{i=1}^n\|\nabla f_i(x_{s,j+1})-\nabla f_i(x_{s,j})\|^2\,\big|\,x_{s,0}\bigg].
    \end{equation}
    Because each $f_i$ is $L_i$-smooth adaptable to the KC-regular kernel $h$, applying the Lipschitz-like bound \eqref{prop:Lipschitz} to each $f_i$ on   $\bar{x}_{s,j+1}, x_{s,j}$ and their interpolation ${x}_{s,j+1} = \gamma\bar{x}_{s,j+1} + (1-\gamma)x_{s,j}$ immediately yields 
    \begin{eqnarray}
    \label{lm:err-sarah-1}
    \mathbb{E}\left[\frac{\|\mathcal{E}_{s,k}\|^2}{\mu_h(\mathcal{X}_s)}\,\Big|\, x_{s,0}\right] 
    &\leq&  \sum_{j=0}^{k-1}\mathbb{E}\bigg[ \frac{\sum_{i=1}^nL_i^2}{n}\cdot\frac{2\gamma^2  \kappa_h^2(\cX_s)}{b}D_h(\bar{x}_{s,j+1},x_{s,j})\,\big|\,x_{s,0}\bigg]\nonumber.
\end{eqnarray}
Using the fact that $L^2 = \frac{1}{n}\sum_{i=1}^nL_i^2$ and the fact that $\kappa_h(\cX_s)\leq \kappa_h^\delta$ leads to \eqref{lm:err-sarah-0}. 
\end{proof}

After properly bounding the error term $\mathcal{E}_{s,k}$, we obtain Lemma \ref{lemma:proxSARAH-grad-map-mid} for the restricted primal gradient mapping, whose proof is placed in Appendix \ref{appendix:sec:VR}. 
\begin{lemma}
    \label{lemma:proxSARAH-grad-map-mid}
    For any $b,\tau\in\mathbb{Z}_+$, set $\eta = \frac{\sqrt{2\tau}}{\sqrt{7\tau}+\sqrt{2b}}$, $\gamma = \frac{\sqrt{b}}{L\kappa_h^\delta\sqrt{\tau}}$, and $|\mathcal{B}_{s,k}|=b$, $\forall s,k\geq1$, then
    $$\mathbb{E}\left[\sum_{s=1}^S\sum_{k=0}^{\tau_s-1}\frac{\mu_h(\mathcal{X}_s)}{8}\big\|\cGshe(x_{s,k})\big\|^2 +D_h(\bar{x}_{s,k+1},x_{s,k}) \right]\leq \frac{\Delta_\Psi}{\gamma\eta},$$
    where $\Delta_\Psi:=\Psi(x_{1,0})-\inf\{\Psi(x):x\in\RR^d\}$ denotes the initial function value gap. 
\end{lemma}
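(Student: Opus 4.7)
My plan is to telescope the per-step descent inequality of Lemma \ref{lemma:descent-proxSARAH-finite} over the full trajectory, absorb the accumulated gradient-estimation error into the $D_h$-term via a SARAH-style variance bound adapted to the interpolation step \eqref{alg:proxSARAH-update}, and finally verify that the prescribed parameters $(\gamma,\eta)$ leave a residual coefficient of at least $\gamma\eta$ in front of $D_h(\bar{x}_{s,k+1},x_{s,k})$.

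\emph{Step 1 (telescoping).} Summing the inequality of Lemma \ref{lemma:descent-proxSARAH-finite} over $k=0,\dots,\tau_s-1$ within each epoch and then over $s=1,\dots,S$, and using $x_{s+1,0}=x_{s,\tau_s}$, the $\Psi$ values telescope to at most $\Delta_\Psi$. Taking total expectation yields
\[
\mathbb{E}\Bigl[\textstyle\sum_{s,k}\tfrac{\gamma\eta\mu_h(\mathcal{X}_s)}{8}\|\cGshe(x_{s,k})\|^2 + \bigl(\tfrac{\gamma}{\eta}-L\kappa_h^\delta\gamma^2\bigr) D_h(\bar{x}_{s,k+1},x_{s,k})\Bigr] \;\leq\; \Delta_\Psi + \tfrac{5\gamma\eta}{4}\sum_{s,k}\mathbb{E}\Bigl[\tfrac{\|\mathcal{E}_{s,k}\|^2}{\mu_h(\mathcal{X}_s)}\Bigr].
\]

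\emph{Step 2 (adapted variance bound).} I would mirror the martingale-difference argument of Lemma \ref{lemma:err-sarah}, but notice that the interpolation step forces $x_{s,j}-x_{s,j-1}=\gamma(\bar{x}_{s,j}-x_{s,j-1})$, which introduces a new $\gamma^2$ factor. Combining the $L\cdot L_h(\mathcal{X}_s)$-Lipschitzness of each $\nabla f_\xi$ on $\mathcal{X}_s$ with $\|\bar{x}_{s,j}-x_{s,j-1}\|^2\leq 2D_h(\bar{x}_{s,j},x_{s,j-1})/\mu_h(\mathcal{X}_s)$ (both $\bar{x}_{s,j}$ and $x_{s,j-1}$ lie in $\mathcal{X}_s$ because of the indicator in $\Tshe$), and invoking $\kappa_h(\mathcal{X}_s)\leq\kappa_h^\delta$, one obtains
\[
\mathbb{E}\Bigl[\tfrac{\|\mathcal{E}_{s,k}\|^2}{\mu_h(\mathcal{X}_s)}\,\Big|\,x_{s,0}\Bigr]\;\leq\;\tfrac{2L^2(\kappa_h^\delta)^2\gamma^2}{b}\sum_{j=1}^{k}\mathbb{E}\bigl[D_h(\bar{x}_{s,j},x_{s,j-1})\,\big|\,x_{s,0}\bigr].
\]
Summing over $k=0,\dots,\tau_s-1$, swapping the double sum, and using $\tau_s\leq\tau$, inflates this into a per-epoch bound proportional to $\tau\sum_{k=0}^{\tau_s-1}\mathbb{E}[D_h(\bar{x}_{s,k+1},x_{s,k})]$.

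\emph{Step 3 (parameter tuning).} Substituting the variance bound into Step 1, the effective coefficient of $D_h(\bar{x}_{s,k+1},x_{s,k})$ becomes $\frac{\gamma}{\eta}-L\kappa_h^\delta\gamma^2-\frac{5L^2(\kappa_h^\delta)^2\gamma^3\eta\tau}{2b}$. To reach the claim it suffices that this be at least $\gamma\eta$. Dividing through by $\gamma$ and plugging in $\gamma=\sqrt{b}/(L\kappa_h^\delta\sqrt{\tau})$ collapses the constants ($L\kappa_h^\delta\gamma=\sqrt{b/\tau}$ and $\frac{5L^2(\kappa_h^\delta)^2\gamma^2\tau}{2b}=\tfrac{5}{2}$), reducing the requirement to $\eta\sqrt{b/\tau}+\tfrac{7\eta^2}{2}\leq 1$. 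After clearing denominators, this is equivalent to the elementary identity $\sqrt{2b}\cdot(\sqrt{7\tau}+\sqrt{2b})+7\tau\leq(\sqrt{7\tau}+\sqrt{2b})^2$, which is exactly what the prescribed $\eta=\sqrt{2\tau}/(\sqrt{7\tau}+\sqrt{2b})$ enforces with room to spare. A final division of both sides by $\gamma\eta$ produces the stated inequality.

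The main obstacle I expect is Step 2: Lemma \ref{lemma:err-sarah} is stated for the iteration without an interpolation step, so rerunning its proof requires carefully tracking the extra $\gamma^2$ emerging from $\|x_{s,j}-x_{s,j-1}\|^2=\gamma^2\|\bar{x}_{s,j}-x_{s,j-1}\|^2$ and re-expressing everything in terms of $D_h(\bar{x}_{s,j},x_{s,j-1})$ rather than $D_h(x_{s,j},x_{s,j-1})$. Getting this $\gamma^2$ factor right is exactly what enables the parameter choice $\gamma\propto\sqrt{b/\tau}$ to accommodate arbitrary batch sizes $b$ in Step 3, so it is the single most delicate point of the argument.
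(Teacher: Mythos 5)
Your proposal is correct and follows essentially the same route as the paper's own proof: telescope the descent inequality of Lemma \ref{lemma:descent-proxSARAH-finite}, re-derive the SARAH variance bound for the interpolated iterates so that the $\gamma^2$ factor from $x_{s,j}-x_{s,j-1}=\gamma(\bar{x}_{s,j}-x_{s,j-1})$ is retained (the paper does this via Lemma \ref{lemma:proxSARAH-finite-sum} rather than literally rerunning Lemma \ref{lemma:err-sarah}, but the resulting bound $\mathbb{E}[\|\mathcal{E}_{s,k}\|^2/\mu_h(\cX_s)\mid x_{s,0}]\le\frac{2\gamma^2L^2(\kappa_h^\delta)^2}{b}\sum_j\mathbb{E}[D_h(\bar{x}_{s,j},x_{s,j-1})\mid x_{s,0}]$ is identical), swap the double sum to pick up the $\tau$ factor, and verify the residual coefficient $\frac{\gamma}{\eta}-L\kappa_h^\delta\gamma^2-\frac{5\tau\gamma^3\eta L^2(\kappa_h^\delta)^2}{2b}\ge\gamma\eta$. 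Your algebraic check in Step 3 (reducing to $\eta\sqrt{b/\tau}+\frac{7\eta^2}{2}\le1$ and clearing denominators) and the paper's check (factoring out $\gamma\eta$ and showing the bracket equals $1+(\sqrt{14}-\sqrt{3.5})\sqrt{b/\tau}\ge1$) are two equivalent ways of writing the same computation.
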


\noindent There are several significant difficulties in the interpretation of Lemma \ref{lemma:proxSARAH-grad-map-mid}.

First, in Lemma \ref{lemma:proxSARAH-grad-map-mid}, the length $\tau_s$ of each epoch are random variables. Therefore, it is incorrect to simply divide $\sum_{s=1}^S\tau_s$ on both sides and argue $\mathbb{E}[\|\cGshe(x_{s,k})\|^2]\leq O(1/\sum_{s=1}^S\tau_s)$ for some randomly selected $x_{s,k}$. Moreover, if $\tau_s$ are too small compared to $\tau$, then one should frequently restart and take full batch to initialize new epochs, which may cause a bad sample complexity. Therefore, careful probabilistic analyses are required to exclude such event.
\begin{figure}[h]
\centering
    \includegraphics[width=0.99\linewidth]{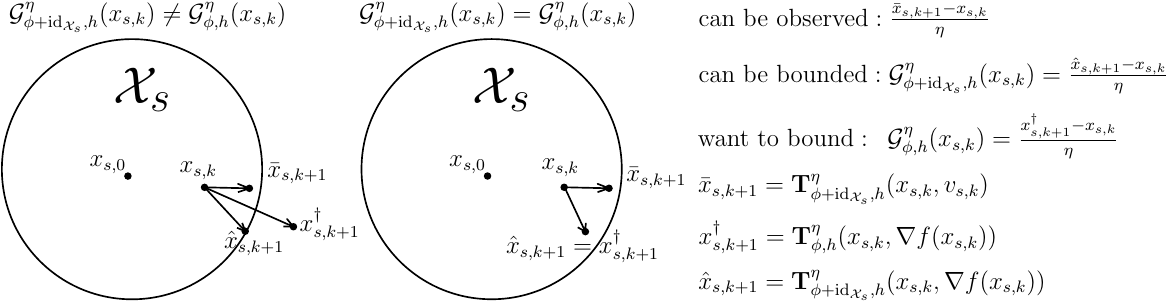}
    \caption{Differences between what we can observe, what we can bound, and what we want to bound.}
  \label{fig:mismatch}
\end{figure}
Second, even if the first issue is resolved, a direct consequence of Lemma \ref{lemma:proxSARAH-grad-map-mid} will be a small restricted primal gradient mapping $\|\cGshe(\cdot)\|^2$.  If the second case of Figure \ref{fig:mismatch} happens, $\hat{x}_{s,k+1}\in\mathrm{int}(\cX_s)$ and the constraint $x\in\cX_s$ is inactive. Then $\hat{x}_{s,k+1}=x_{s,k+1}^\dagger$ and $\|\cGshe(x_{s,k})\|^2 = \|\cGphe(x_{s,k})\|^2$.  However if  $\hat{x}_{s,k+1}\in\partial\cX_s$, one may have $x^\dagger_{s,k+1}\notin\cX_s$ and $\|\cGphe(x_{s,k})\|^2>\|\cGshe(x_{s,k})\|^2$. In this case, having a small restricted primal gradient mapping may not necessarily indicate a small primal gradient mapping. Moreover, observing $\bar{x}_{s,k+1}\in\mathrm{int}(\cX_s)$ also may not necessarily indicate $\hat{x}_{s,k+1}\in\mathrm{int}(\cX_s)$ due to the gradient estimation errors. Therefore, we also need careful probabilistic analyses to show that the bad event $\hat{x}_{s,k+1}\in\partial\cX_s$ may  only happen for limited times with high probability. To resolve the above two issues, let us bound the probability for the following events. 

\begin{lemma}
\label{lemma:event-halway} 
For any $S$ epochs  generated by Algorithm \ref{algorithm:proxSARAH}, define the set $\mathcal{I}_1$ and event $\mathcal{A}_1(m_1)$ as  
$$\mathcal{I}_1:=\big\{s\in[S] :  \tau_s < \tau\big\}\qquad\mbox{and}\qquad\mathcal{A}_1(m_1):=\left\{\omega: |\mathcal{I}_1|\geq m_1\right\}.$$
where $m_1>0$ is an arbitrary positive number. Then it holds that 
$$\mathrm{Prob}\left(\mathcal{A}_1(m_1)\right)\leq \frac{32\gamma\tau\Delta_\Psi}{\eta\mu\delta^2\cdot m_1}\,.$$
\end{lemma}
\begin{proof}
By lemma \ref{lemma:proxSARAH-grad-map-mid}, ignoring the restricted primal gradient mapping terms yields   
\begin{eqnarray}
    \label{lm:event-halfway-1}
    \frac{\Delta_\Psi}{\gamma\eta} &\geq& \mathbb{E}\left[\sum_{s=1}^S\sum_{k=0}^{\tau_s-1}D_h(\bar{x}_{s,k+1},x_{s,k}) \right]\nonumber\\ 
    & \geq & \mathbb{E}\left[\frac{\mu}{2\gamma^2}\sum_{s\in\mathcal{I}_1}\sum_{k=0}^{\tau_s-1}\|{x}_{s,k+1}-x_{s,k}\|^2 \right] \\
    & \geq & \frac{\mu}{2\gamma^2}\cdot \mathrm{Prob}\left(\mathcal{A}_1(m_1)\right) \cdot\mathbb{E}\left[\sum_{s\in\mathcal{I}_1}\sum_{k=0}^{\tau_s-1}\|{x}_{s,k+1}-x_{s,k}\|^2\,\Big|\, \mathcal{A}_1(m_1) \right] \nonumber,
\end{eqnarray}
where the last inequality is due to the fact that   
$$\EE[X] = \mathrm{Prob}(\cA)\cdot\EE\big[X\mid\cA\big] + \mathrm{Prob}(\cA^c)\cdot\EE\big[X\mid\cA^c\big]\geq \mathrm{Prob}(\cA)\cdot\EE\big[X\mid\cA\big]$$
for any non-negative random variable $X\geq0$ and any event $\cA$. Therefore, conditioning on the event $\mathcal{A}_1(m_1)$, for any epoch $s\in\mathcal{I}_1$, we will have $\|x_{s,\tau_s}-x_{s,0}\|\geq\frac{\delta}{4}$ because of Line 3 and Line 9 of Algorithm \ref{algorithm:proxSARAH}. Consequently, for $\forall s\in\mathcal{I}_1$, the triangle inequality and the arithmetic inequality indicate that
\begin{equation}
    \label{lm:event-halfway-2}
    \frac{\delta}{4\tau_s}\leq \frac{\|x_{s,\tau_s}-x_{s,0}\|}{\tau_s}\leq\frac{\sum_{k=0}^{\tau_s-1}\|x_{s,k+1}-x_{s,k}\|}{\tau_s}\leq  \sqrt{\frac{\sum_{k=0}^{\tau_s-1}\|x_{s,k+1}-x_{s,k}\|^2}{\tau_s}}\,.
\end{equation} 
Because $\tau_s\leq\tau$ always holds, we have 
\begin{align}
    \label{lm:event-halfway-3}
    \sum_{k=0}^{\tau_s-1}\|x_{s,k+1}-x_{s,k}\|^2  \geq  \sum_{k=0}^{\tau_s-1}\|x_{s,k+1}-x_{s,k}\|^2\geq\frac{\delta^2}{16\tau_s}\geq\frac{\delta^2}{16\tau}\,.
\end{align}
Note that the above inequalities hold w.p. 1 conditioning on $\cA_1(m_1)$. Combined with \eqref{lm:event-halfway-1}, we have 
$$\frac{\Delta_\Psi}{\gamma\eta} \geq\frac{\mu}{2\gamma^2}\cdot \mathrm{Prob}\left(\mathcal{A}_1(m_1)\right)\cdot\frac{m_1\delta^2}{16\tau}.$$
Rearranging the terms completes the proof. 
\end{proof}
By Lemma \ref{lemma:event-halway}, we show that at least $S-m_1$ epochs never stop early with $1-O(1/m_1)$ probability, which indicates that the constraint set $\cX_s$ remains inactive in these epochs. In the next lemma, we show that with high probability, the majority of iterates will not suffer the issue that $\cGshe(\cdot)\neq\cGphe(\cdot)$.   

\begin{lemma}
    \label{lemma:event-grad-map}
    For any $S$ epochs generated by Algorithm \ref{algorithm:proxSARAH}, define the set $\mathcal{I}_2$ and event $\mathcal{A}_2(m_2)$ as
    $$\mathcal{I}_2:=\left\{(s,k) :  \hat{x}_{s,k+1}\in\partial \mathcal{X}_s, 1\leq s\leq S, 0\leq k\leq \tau_s-1\right\}\quad\mbox{and}\quad\mathcal{A}_2(m_2):=\big\{\omega : |\mathcal{I}_2|\geq m_2\big\}$$
    where  $m_2>0$ is an arbitrary positive number. Then it holds that 
    $$\mathrm{Prob}\left(\mathcal{A}_2(m_2)\right)\leq \frac{128\eta\Delta_\Psi}{\gamma\mu\delta^2\cdot m_2}\,.$$
\end{lemma}
\begin{proof}
Similar to the proof of Lemma \ref{lemma:event-halway}, ignoring the Bregman divergence terms in Lemma \ref{lemma:proxSARAH-grad-map-mid} yields   
\begin{eqnarray}
    \label{lm:event-grad-map-1}
    \frac{\Delta_\Psi}{\gamma\eta} & \geq & \mathbb{E}\left[\sum_{s=1}^S\sum_{k=0}^{\tau_s-1}\frac{\mu}{8}\big\|\cGshe(x_{s,k})\big\|^2\right]\nonumber\\
    & = & \frac{\mu}{8\eta^2}\mathbb{E}\left[\sum_{s=1}^S\sum_{k=0}^{\tau_s-1}\|\hat{x}_{s,k+1}-x_{s,k}\|^2\right]\\
    & \geq & \frac{\mu}{8\eta^2}\mathbb{E}\Bigg[\sum_{(s,k)\in\mathcal{I}_2} \|\hat{x}_{s,k+1}-x_{s,k}\|^2\Bigg]\nonumber\\
    & \geq & \frac{\mu}{8\eta^2}\cdot\mathrm{Prob}\left(\mathcal{A}_2(m_2)\right)\cdot\mathbb{E}\Bigg[\sum_{(s,k)\in\mathcal{I}_2}\|\hat{x}_{s,k+1}-x_{s,k}\|^2\,\Big|\,\mathcal{A}_2(m_2)\Bigg]\nonumber.
\end{eqnarray}
Note that for  $\forall(s,k)\in\mathcal{I}_2$, we have $\mathrm{dist}(x_{s,k},\partial\cX_s)\geq\delta/4$ while $\hat{x}_{s,k+1}\in\partial\cX_s$. Consequently, one must have $\|\hat{x}_{s,k+1}-{x}_{s,k}\|\geq 
\frac{\delta}{4}$. Conditioning on the event $\cA_2(m_2)$, we have $|\mathcal{I}_2|\geq m_2$ and 
$$\mathbb{E}\Bigg[\sum_{(s,k)\in\mathcal{I}_2}\|\hat{x}_{s,k+1}-x_{s,k}\|^2\,\Big|\,\mathcal{A}_2(m_2)\Bigg]\geq \frac{m_2\delta^2}{16}.$$
Substitute this bound to \eqref{lm:event-grad-map-1} proves lemma.  
\end{proof}

\noindent Now we present the final result in the following theorem.  

\begin{theorem}
    \label{theorem:proxSARAH-finite}
    For any constant batch size $|\mathcal{B}_{s,k}|=b\in[n]$, let us set the epoch length as $\tau = \lceil n/b\rceil$, step size $\eta = \frac{\sqrt{2\tau}}{\sqrt{7\tau}+\sqrt{2b}}$, interpolation coefficient $\gamma = \frac{\sqrt{b}}{L\kappa_h^\delta\sqrt{\tau}}$, and total epoch number $S = \big\lceil\frac{16\Delta_\Psi}{\tau\gamma\eta\mu\epsilon}\big\rceil$. Suppose the target accuracy satisfies $\epsilon\leq \frac{\delta^2}{16}\cdot\min\big\{\frac{L^2(\kappa_h^\delta)^2}{b\tau},\frac{1}{9\eta^2}\big\} = O(1/n)$ and let $x_{\mathrm{out}}$ be uniformly randomly selected from all iterations, then there is a high probability event $\mathcal{A}$ such that $$\mathbb{E}\Big[\big\|\cGphe(x_{\mathrm{out}})\big\|^2 \,\big|\, \mathcal{A}\Big]\leq 4\epsilon\qquad\mbox{and}\qquad \mathrm{Prob}\left(\mathcal{A}\right)\geq1 - \frac{8\eta\tau b\epsilon}{L^2(\kappa_h^\delta)^2\delta^2}-\frac{4\sqrt{\epsilon}}{\delta}.$$
    In particular, $\mathrm{Prob}\left(\mathcal{A}\right) \geq 1-O(n\epsilon+\sqrt{\epsilon})\to1$ as $\epsilon\to0.$ Suppose we take the batch size $b=O(n^\alpha)$, $\alpha\in[0,1]$, then the total number of samples consumed is $O\big(\epsilon^{-1}\cdot n^{\max\{\alpha,\frac{1}{2}\}}\big)$.
\end{theorem}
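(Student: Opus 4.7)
The plan follows the template of Theorem \ref{theorem:sarah} but must confront two new complications: the per-epoch iteration counts $\tau_s$ are random (due to the early-termination rule in Line 6 of Algorithm \ref{algorithm:proxSARAH}), and $x_{\mathrm{out}}$ is drawn uniformly over the random normalizer $N := \sum_{s=1}^S \tau_s$. I would start from Lemma \ref{lemma:proxSARAH-grad-map-mid} with $\mu_h(\mathcal{X}_s)\geq\mu$ to obtain the master descent $\EE\bigl[\sum_{s,k}\tfrac{\mu}{8}\|\cGshe(x_{s,k})\|^2 + D_h(\bar x_{s,k+1},x_{s,k})\bigr]\leq \Delta_\Psi/(\gamma\eta)$, and then invoke Lemma \ref{lemma:events-proxSARAH} on the two bad events $\mathcal{A}_1(m_1)=\{|\mathcal{I}_1|\geq m_1\}$ and $\mathcal{A}_2(m_2)$. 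Define the good index set $\mathcal{I}^* = \bigl(\bigcup_{s\in\mathcal{I}_1^c}\{s\}\times\{0,\dots,\tau-1\}\bigr) \cap \mathcal{I}_2^c$ and the success event $\mathcal{A} = \mathcal{A}_1^c(m_1)\cap\mathcal{A}_2^c(m_2)\cap\mathcal{A}_3$ with $\mathcal{A}_3=\{\text{index of }x_{\mathrm{out}}\text{ lies in }\mathcal{I}^*\}$. On $\mathcal{A}$, the restricted and full gradient mappings coincide at $x_{\mathrm{out}}$, i.e.\ $\cGshe(x_{\mathrm{out}})=\cGphe(x_{\mathrm{out}})$.

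For the conditional expectation, note that $N\geq (S-m_1)\tau$ pointwise on $\mathcal{A}_1^c(m_1)$; choosing $m_1\leq S/2$ gives $N\geq S\tau/2$, so the master descent yields $\EE[\|\cGphe(x_{\mathrm{out}})\|^2 \mathbf{1}_\mathcal{A}] \leq \tfrac{2}{S\tau}\EE\bigl[\sum_{s,k}\|\cGshe(x_{s,k})\|^2\bigr] \leq \tfrac{16\Delta_\Psi}{S\tau\gamma\eta\mu}\leq\epsilon$ after substituting $S = \lceil 16\Delta_\Psi/(\tau\gamma\eta\mu\epsilon)\rceil$. Dividing by $\mathrm{Prob}(\mathcal{A})\geq 1/4$ (verified below) produces the target $\EE[\|\cGphe(x_{\mathrm{out}})\|^2\mid\mathcal{A}]\leq 4\epsilon$.

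For the probability bound, decompose $\mathrm{Prob}(\mathcal{A}^c) \leq \mathrm{Prob}(\mathcal{A}_1(m_1)) + \mathrm{Prob}(\mathcal{A}_2(m_2)) + \mathrm{Prob}(\mathcal{A}_3^c\mid\mathcal{A}_1^c\cap\mathcal{A}_2^c)$. The $\mathcal{A}_2$ contribution is balanced by AM-GM between $\mathrm{Prob}(\mathcal{A}_2(m_2))$ and $m_2/(S\tau)$: choosing $m_2 \asymp \sqrt{\eta\Delta_\Psi S\tau/(\gamma\mu\delta^2)}$ makes each piece $\Theta(\eta\sqrt\epsilon/\delta)$, upper bounded by $2\sqrt\epsilon/\delta$ via $\eta<\sqrt{2/7}$, yielding the $\tfrac{4\sqrt{\epsilon}}{\delta}$ term of the claim. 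The $\mathcal{A}_1$ contribution needs a genuinely different treatment, since the naive AM-GM between $\mathrm{Prob}(\mathcal{A}_1(m_1))$ and $m_1/S$ would produce only $O(\sqrt{n\epsilon})$. Instead, I would bound $\mathrm{Prob}(x_{\mathrm{out}}\in\mathcal{I}_1)$ directly using the pointwise inequality $\sum_{s\in\mathcal{I}_1}\tau_s/N\leq |\mathcal{I}_1|/(S-|\mathcal{I}_1|)\leq 2|\mathcal{I}_1|/S$ on $\{|\mathcal{I}_1|\leq S/2\}$ combined with Markov's inequality on the complementary event, obtaining $\mathrm{Prob}(x_{\mathrm{out}}\in\mathcal{I}_1)=O(\EE[|\mathcal{I}_1|]/S)$. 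The expectation bound $\EE[|\mathcal{I}_1|]\leq 32\tau\gamma\Delta_\Psi/(\eta\mu\delta^2)$ is obtained by the same argument used to prove Lemma \ref{lemma:events-proxSARAH}: summing $\sum_k\|x_{s,k+1}-x_{s,k}\|^2\geq\delta^2/(16\tau)$ across $s\in\mathcal{I}_1$ and invoking the total Bregman-divergence bound from Step 1. Substituting $S$ and the identity $\tau^2\gamma^2=\tau b/(L\kappa_h^\delta)^2$ produces the $\tfrac{8\eta\tau b\epsilon}{L^2(\kappa_h^\delta)^2\delta^2}$ term. The hypothesis $\epsilon\leq\tfrac{\delta^2}{16}\min\{L^2(\kappa_h^\delta)^2/(b\tau),1/(9\eta^2)\}$ is precisely what ensures that the chosen $m_1,m_2$ satisfy $m_1\leq S/2$ and $m_2\leq S\tau/4$, validating both the conditional-expectation step and the AM-GM balancing.

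Sample complexity is routine: each epoch uses $n+b\tau_s\leq 2n$ samples, so the total is $S\cdot O(n) = O(n/(\tau\gamma\eta\epsilon))$; substituting the stepsize choices together with $\tau=\lceil n/b\rceil$ and $b=n^\alpha$ gives $\tau\gamma\eta=\Theta(n^{\min\{1/2,1-\alpha\}}/(L\kappa_h^\delta))$, hence the stated $O(n^{\max\{\alpha,1/2\}}/\epsilon)$. The main obstacle is exactly the $\mathcal{A}_1$-contribution in the probability bound: preserving the desired $O(n\epsilon)$ scaling (rather than the $O(\sqrt{n\epsilon})$ that naive AM-GM would give) forces the direct $\EE[|\mathcal{I}_1|]$ argument with Markov truncation at $\{|\mathcal{I}_1|\leq S/2\}$, which is qualitatively different from and sharper than the probabilistic argument used in Theorem \ref{theorem:sarah}.
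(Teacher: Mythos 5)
Your proposal reproduces the paper's descent-lemma infrastructure and the overall event scaffold $\mathcal{A}=\mathcal{A}_1^c(m_1)\cap\mathcal{A}_2^c(m_2)\cap\mathcal{A}_3$, but it resolves the central technical obstacle — avoiding the $O(\sqrt{n\epsilon})$ loss from a naive AM-GM on $m_1$ — via a genuinely different route than the paper. The paper's trick is twofold: it modifies $\mathcal{I}^*$ to simply $\mathcal{I}_2^c$ (so that iterations inside early-terminating epochs are \emph{not} excluded from the good set, meaning no $m_1/S$ term appears in $\mathrm{Prob}(\mathcal{A}_3^c\mid\cdot)$ at all), and then it controls the random normalizer $N=(S-|\mathcal{I}_1|)\tau+\sum_{s\in\mathcal{I}_1}\tau_s$ by the arithmetic–harmonic inequality $\tfrac{S}{N}\leq\tfrac{1}{\tau}+\tfrac{1}{S}\sum_{s\in\mathcal{I}_1}\tau_s^{-1}$, with the residual sum $\sum_{s\in\mathcal{I}_1}\tau_s^{-1}$ bounded through the Bregman-divergence budget (equation \eqref{thm:proxSARAH-finite-2}); the threshold $m_1^*=S/4$ is then fixed rather than AM-GM-balanced. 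Your version keeps the Theorem~\ref{theorem:sarah}-style $\mathcal{I}^*$ that \emph{does} exclude $\mathcal{I}_1$ epochs, and instead bounds $\mathrm{Prob}(x_{\mathrm{out}}\text{'s epoch}\in\mathcal{I}_1)=\mathbb{E}\bigl[\tfrac{\sum_{s\in\mathcal{I}_1}\tau_s}{N}\bigr]$ directly via the pointwise estimate $\tfrac{\sum_{s\in\mathcal{I}_1}\tau_s}{N}\leq\tfrac{|\mathcal{I}_1|}{S-|\mathcal{I}_1|}\leq\tfrac{2|\mathcal{I}_1|}{S}$ on $\{|\mathcal{I}_1|\leq S/2\}$, a trivial $\leq 1$ bound on the complement, and Markov on $\mathbb{E}[|\mathcal{I}_1|]$, giving $\leq 4\mathbb{E}[|\mathcal{I}_1|]/S$. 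Plugging $\mathbb{E}[|\mathcal{I}_1|]\leq 32\gamma\tau\Delta_\Psi/(\eta\mu\delta^2)$ and the choice of $S$ yields $8b\tau\epsilon/(L^2(\kappa_h^\delta)^2\delta^2)$, the same $O(n\epsilon)$ order. Both routes land on the stated complexity; yours is more elementary (no harmonic-mean manipulation of $1/N$, no expansion over the joint law of $(\mathcal{I}_1,\mathcal{I}_2,\{\tau_s\})$), while the paper's pays off with a marginally tighter accounting (it never throws away good iterations inside early-stopped epochs) and a cleaner separation in which the $\mathcal{I}_1$ information enters only through the denominator $N$. One small caution: your written decomposition $\mathrm{Prob}(\mathcal{A}^c)\leq\mathrm{Prob}(\mathcal{A}_1(m_1))+\mathrm{Prob}(\mathcal{A}_2(m_2))+\mathrm{Prob}(\mathcal{A}_3^c\mid\cdot)$ sits awkwardly with the direct $\mathbb{E}[|\mathcal{I}_1|]$-argument — to carry it out consistently you should fix $m_1=S/2$ (needed anyway so that $N\geq S\tau/2$ in the conditional-expectation step), bound $\mathrm{Prob}(\mathcal{A}_1(S/2))\leq 2\mathbb{E}[|\mathcal{I}_1|]/S$ by Markov, and replace the remaining $\mathcal{I}_1$-part of $\mathrm{Prob}(\mathcal{A}_3^c\cap\mathcal{A}_1^c\cap\mathcal{A}_2^c)$ with your unconditional $\leq 4\mathbb{E}[|\mathcal{I}_1|]/S$ estimate, which together still stays $O(n\epsilon)$.
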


Before proceeding to the proof, we would like to give a brief comment on this theorem. First, if we take $b \leq O(\sqrt{n})$, the total sample compelxity reduces to $O(\sqrt{n}\epsilon^{-1})$. However, we should also note that, as a price for imposing $\cX_s$ constraints to activate KC-regularity, this theorem bounds $\mathbb{E}[\|\cGphe(x_{\mathrm{out}})\|^2 \, |\, \mathcal{A}]$  where $\cA$ is a high probability event. Though $\lim_{\epsilon\to0}\mathrm{Prob}(\cA)=1$, this bound is still slightly weaker than the usual in expectation bound on $\mathbb{E}[\|\cGphe(x_{\mathrm{out}})\|^2]$. Therefore, it still remains an interesting question whether one can further improve the analysis technique and obtain the standard in expectation complexity bound.

\begin{proof}
By lemma \ref{lemma:proxSARAH-grad-map-mid}, ignoring the restricted primal gradient mapping terms yields  
\begin{eqnarray} 
\frac{\Delta_\Psi}{\gamma\eta} \geq
\mathbb{E}\bigg[\sum_{s=1}^{S}\sum_{k=0}^{\tau_s-1}D_h(\bar{x}_{s,k+1},x_{s,k})\bigg] 
\geq \frac{\mu}{2\gamma^2}\mathbb{E}\bigg[\sum_{s=1}^{S}\sum_{k=0}^{\tau_s-1}\|x_{s,k+1}-x_{s,k}\|^2\bigg]\,. \nonumber
\end{eqnarray}

By expanding the expectation over all possible $\mathcal{I}_1, \mathcal{I}_2,$ and $\{\tau_s\}$, we have for all $m_1,m_2>0$ that 
\begin{eqnarray} 
\label{thm:proxSARAH-finite-1}
\frac{2\gamma\Delta_\Psi}{\mu\eta} \!\!\!\!& \geq & \!\!\!\!\mathbb{E}\bigg[\sum_{s=1}^{S}\sum_{k=0}^{\tau_s-1}\!\|x_{s,k+1}\!-\!x_{s,k}\|^2\bigg] \\
& \geq &\!\!\!\! \sum_{|\mathcal{I}_1|<m_1}\sum_{|\mathcal{I}_2|< m_2}\sum_{\tau_s\in[\tau-1],s\in\mathcal{I}_1}\!\!\mathrm{Prob}\big(\mathcal{I}_1,\mathcal{I}_2,\{\tau_s\}_{s\in\mathcal{I}_1}\big)\mathbb{E}\bigg[\!\sum_{s=1}^S\!\sum_{k=0}^{\tau_s-1}\!\|x_{s,k+1}-x_{s,k}\|^2 \,\Big|\, \mathcal{I}_1,\mathcal{I}_2,\{\tau_s\}_{s\in\mathcal{I}_1}\!\bigg]\nonumber\\
& \geq &\!\!\!\! \sum_{|\mathcal{I}_1|<m_1}\sum_{|\mathcal{I}_2|< m_2}\sum_{\tau_s\in[\tau-1],s\in\mathcal{I}_1}\!\!\mathrm{Prob}\big(\mathcal{I}_1,\mathcal{I}_2,\{\tau_s\}_{s\in\mathcal{I}_1}\big)\mathbb{E}\bigg[\!\sum_{s\in\mathcal{I}_1}\!\!\sum_{k=0}^{\tau_s-1}\!\|x_{s,k+1}-x_{s,k}\|^2 \,\Big|\, \mathcal{I}_1,\mathcal{I}_2,\{\tau_s\}_{s\in\mathcal{I}_1}\!\bigg]\nonumber 
\end{eqnarray}
Note that for $\forall s\in\mathcal{I}_1$, according to the discussion in Lemma \ref{lemma:event-grad-map}, we have  $\|x_{s,\tau_s}-x_{s,0}\|  \geq \delta/4.$
Repeating the analysis of \eqref{lm:event-halfway-2} and \eqref{lm:event-halfway-3} yields $\sum_{k=0}^{\tau_s-1}\|x_{s,k+1}-x_{s,k}\|^2  \geq \frac{\delta^2}{16\tau_s}$, which always holds true. Then substituting this lower bound to \eqref{thm:proxSARAH-finite-1} gives 
\begin{equation}
    \label{thm:proxSARAH-finite-2}
    \sum_{|\mathcal{I}_1|<m_1}\sum_{|\mathcal{I}_2|< m_2}\sum_{\tau_s\in[\tau-1],s\in\mathcal{I}_1}\mathrm{Prob}\big(\mathcal{I}_1,\mathcal{I}_2,\{\tau_s\}_{s\in\mathcal{I}_1}\big)\cdot\sum_{s\in\mathcal{I}_1}\tau_s^{-1}\leq\frac{32\gamma\Delta_\Psi}{\mu\eta\delta^2}\,.
\end{equation} 
Define the events  
\begin{equation}
    \label{defn:event-A3}
    \mathcal{A}_3:=\big\{\omega: \mbox{the index of } x_{\mathrm{out}}\mbox{ is from }\mathcal{I}_2^c\big\}\quad\mbox{and}\quad \mathcal{A}:=\mathcal{A}_1^c(m_1)\cap\mathcal{A}_2^c(m_2)\cap\mathcal{A}_3.
\end{equation} 
Then we have
\begin{eqnarray}
    \label{thm:proxSARAH-finite-33}
    \!\!\!\!\!\!\!\!\!\!\mathrm{Prob}\left(\mathcal{A}\right)\!\!\! &=&\!\!\! \sum_{|\mathcal{I}_1|<m_1}\sum_{|\mathcal{I}_2|< m_2}\sum_{\tau_s\in[\tau-1],s\in\mathcal{I}_1}\mathrm{Prob}\big(\mathcal{I}_1,\mathcal{I}_2,\{\tau_s\}_{s\in\mathcal{I}_1}\big)\cdot\mathrm{Prob}\big(\cA_3\mid\mathcal{I}_1,\mathcal{I}_2,\{\tau_s\}_{s\in\mathcal{I}_1}\big)\\
    &=&\!\!\! \sum_{|\mathcal{I}_1|<m_1}\sum_{|\mathcal{I}_2|< m_2}\sum_{\tau_s\in[\tau-1],s\in\mathcal{I}_1}\mathrm{Prob}\big(\mathcal{I}_1,\mathcal{I}_2,\{\tau_s\}_{s\in\mathcal{I}_1}\big)\cdot\left(1-\frac{|\mathcal{I}_2|}{(S-|\mathcal{I}_1|)\tau+\sum_{s\in\mathcal{I}_1}\tau_s}\right)\nonumber\\
    & \geq &\!\!\! \sum_{|\mathcal{I}_1|<m_1}\sum_{|\mathcal{I}_2|< m_2}\sum_{\tau_s\in[\tau-1],s\in\mathcal{I}_1}\mathrm{Prob}\big(\mathcal{I}_1,\mathcal{I}_2,\{\tau_s\}_{s\in\mathcal{I}_1}\big)\cdot\left(1-\frac{m_2}{(S-|\mathcal{I}_1|)\tau+\sum_{s\in\mathcal{I}_1}\tau_s}\right).\nonumber
\end{eqnarray}
Note that for any positive numbers $y_1,\cdots,y_S>0$,  the arithmetic-harmonic inequality states that $$\frac{y_1+y_2+\cdots+y_S}{S}\geq\frac{S}{\frac{1}{y_1}+\frac{1}{y_2}+\cdots+\frac{1}{y_S}}\,.$$ 
Applying this inequality gives 
$$\frac{1}{\tau} + \frac{\sum_{s\in\mathcal{I}_1}\tau_s^{-1}}{S}= \frac{(S-|\mathcal{I}|_1)\tau^{-1} + \sum_{s\in\mathcal{I}_1}\tau_s^{-1}}{S}\geq \frac{S}{(S-|\mathcal{I}|_1)\tau+ \sum_{s\in\mathcal{I}_1}\tau_s}\,,$$
which implies that 
$$1-\frac{m_2}{(S-|\mathcal{I}_1|)\tau+\sum_{s\in\mathcal{I}_1}\tau_s}\geq 1-\frac{m_2}{S\tau}-\frac{m_2}{S^2}\sum_{s\in\mathcal{I}_1}\tau_s^{-1}\,.$$
Substituting this bound to \eqref{thm:proxSARAH-finite-33} and setting $m_1^* = S/4$ yields
\begin{eqnarray}
    \label{thm:proxSARAH-finite-44}
    \!\!\!\mathrm{Prob}\left(\mathcal{A}\right) \!\!\!\!& \geq &\!\!\!\! \sum_{|\mathcal{I}_1|<m_1^*}\sum_{|\mathcal{I}_2|< m_2}\sum_{\tau_s\in[\tau-1],s\in\mathcal{I}_1}\mathrm{Prob}\big(\mathcal{I}_1,\mathcal{I}_2,\{\tau_s\}_{s\in\mathcal{I}_1}\big)\cdot\left(1-\frac{m_2}{S\tau}-\frac{m_2}{S^2}\sum_{s\in\mathcal{I}_1}\tau_s^{-1}\right)\nonumber\\
    & \overset{(i)}{\geq} &\!\!\!\! \left(1-\frac{m_2}{S\tau}\right)\mathrm{Prob}\left(\mathcal{A}_1^c(m_1^*)\cap\mathcal{A}_2^c(m_2)\right) - \frac{m_2}{S^2}\cdot\frac{32\gamma\Delta_\Psi}{\mu\eta\delta^2}\\
    & \geq & \!\!\!\! \left(1-\frac{m_2}{S\tau}\right)\big(1-\mathrm{Prob}\left(\mathcal{A}_1(m_1^*)\right)-\mathrm{Prob}\left(\mathcal{A}_2(m_2)\right)\big) - \frac{m_2}{S^2}\cdot\frac{32\gamma\Delta_\Psi}{\mu\eta\delta^2}\nonumber\\
    & \overset{(ii)}{\geq} & \!\!\!\! 1 - \frac{8\eta\tau b\epsilon}{L^2(\kappa_h^\delta)^2\delta^2} - \frac{128\eta\Delta_\Psi}{\gamma\mu\delta^2\cdot m_2} - \frac{m_2}{S\tau} - \frac{m_2}{S^2}\cdot\frac{32\gamma\Delta_\Psi}{\mu\eta\delta^2}\nonumber\\
    & \overset{(iii)}{\geq} & \!\!\!\! 1 - \frac{8\eta\tau b\epsilon}{L^2(\kappa_h^\delta)^2\delta^2} - \frac{128\eta\Delta_\Psi}{m_2\gamma\mu\delta^2} - \frac{9m_2}{8S\tau}\nonumber
\end{eqnarray}
where (i) is due to \eqref{thm:proxSARAH-finite-2} and the fact that 
\begin{equation}
    \label{thm:proxSARAH-finite-55}
    \sum_{|\mathcal{I}_1|<m_1^*}\sum_{|\mathcal{I}_2|< m_2}\sum_{\tau_s\in[\tau-1],s\in\mathcal{I}_1}\mathrm{Prob}\big(\mathcal{I}_1,\mathcal{I}_2,\{\tau_s\}_{s\in\mathcal{I}_1}\big) = \mathrm{Prob}\big(\mathcal{A}_1^c(m_1^*)\cap\mathcal{A}_2^c(m_2)\big)
\end{equation}
(ii) is due to Lemma \ref{lemma:event-halway} with $m_1^*=S/4$ and $S = \big\lceil\frac{16\Delta_\Psi}{\tau\gamma\eta\mu\epsilon}\big\rceil$: 
$$\mathrm{Prob}\big(\mathcal{A}_1(S/4)\big)\leq \frac{32\gamma\tau\Delta_\Psi}{\mu\delta^2S/4} = \frac{16\Delta_\Psi}{S\tau\gamma\eta\mu}\cdot\frac{8\gamma^2\tau^2\eta}{\delta^2} \leq \frac{8\tau b\epsilon}{L^2(\kappa_h^\delta)^2\delta^2},$$
and (iii) is because we require $\epsilon\leq \frac{\delta^2}{16}\cdot\min\left\{\frac{L^2(\kappa_h^\delta)^2}{b\tau},\frac{1}{9\eta^2}\right\} = O(1/n)$ such that 
$$\frac{m_2}{S^2}\cdot\frac{32\gamma\Delta_\Psi}{\mu\eta\delta^2} = \frac{m_2}{S\tau}\cdot\frac{16\Delta_\Psi}{S\tau\gamma\eta\mu}\cdot\frac{2\gamma^2\tau^2}{\delta^2}\leq \frac{m_2}{8S\tau}$$
Therefore, to maximize the above probability, we can choose $m_2^*=\sqrt{\frac{S\tau\eta\Delta_\Psi}{\gamma\mu}}\cdot\frac{32}{3\delta}$ such that
$$\frac{128\eta\Delta_\Psi}{m_2^*\gamma\mu\delta^2} + \frac{9m_2^*}{8S\tau} = 2\sqrt{\frac{128\eta\Delta_\Psi}{m_2^*\gamma\mu\delta^2} \cdot  \frac{9m_2^*}{8S\tau}} = \sqrt{\frac{\eta\Delta_\Psi}{\gamma\mu S\tau}}\cdot\frac{24}{\delta}\leq \frac{6\eta\sqrt{\epsilon}}{\delta}\leq \frac{4\sqrt{\epsilon}}{\delta}\,, $$
where the last inequality is due to the fact that $\eta = \frac{\sqrt{2\tau}}{\sqrt{7\tau}+\sqrt{2b}}\leq\sqrt{2/7}.$ Combining all the above discussion, we can conclude that  
$$\mathrm{Prob}\left(\mathcal{A}\right)\geq1 - \frac{8\eta\tau b\epsilon}{L^2(\kappa_h^\delta)^2\delta^2}-\frac{4\sqrt{\epsilon}}{\delta} = 1-O(n\epsilon+\sqrt{\epsilon})$$
when taking $m_1^* = S/4$ and $m_2^* = \sqrt{\frac{S\tau\eta\Delta_\Psi}{\gamma\mu}}\cdot\frac{32}{3\delta}.$ Note that the requirement on $\epsilon$ further implies that $\mathrm{Prob}(\cA_1(m_1^*))\leq 1/2$ and $\mathrm{Prob}(\cA_2(m_2^*))\leq 1/4$, and hence $\mathrm{Prob}\big(\cA_1^c(m_1^*)\cap\cA_2^c(m_2^*)\big)\geq1/4$. By ignoring the Bregman divergence terms of Lemma \ref{lemma:proxSARAH-grad-map-mid}, we obtain
\begin{eqnarray*}
    \frac{\Delta_\Psi}{\gamma\eta} &\geq& \frac{\mu}{8} \mathbb{E}\bigg[\sum_{s=1}^S\sum_{k=0}^{\tau_s-1}\big\|\cGshe(x_{s,k})\big\|^2\bigg]  \geq \frac{\mu}{32}\mathbb{E}\bigg[\sum_{(s,k)\in\mathcal{I}_2^c}\big\|\cGshe(x_{s,k})\big\|^2\,\Big|\,\cA_1^c(m_1^*)\cap\cA_2^c(m_2^*)\bigg] \nonumber.
\end{eqnarray*}
Conditioning on $\cA_1^c(m_1^*)\cap\cA_2^c(m_2^*)$, regardless of the random sets $\mathcal{I}_1, \mathcal{I}_2$ and $\{\tau_s\}_{s\in\mathcal{I}_1}$, we have  
\begin{eqnarray}
    |\mathcal{I}_2^c| = (S-|\mathcal{I}_1|)\tau+\sum_{s\in\mathcal{I}_1}\tau_s - |\mathcal{I}_2|\geq  \frac{3S\tau}{4} - m_2^*\geq S\tau\Big(\frac{3}{4}-\frac{8\eta\sqrt{\epsilon}}{3\delta}\Big) \geq \frac{S\tau}{2}.\nonumber
\end{eqnarray}
where the second last inequality is because $|\mathcal{I}_2|\leq m_2^*\leq \frac{8S\tau\eta\sqrt{\epsilon}}{3\delta}$, and the last inequality is because  $\frac{8\eta\sqrt{\epsilon}}{3\delta}\leq 2/9< 1/4$ since we require $\epsilon\leq\frac{\delta^2}{16}\cdot\frac{1}{9\eta^2}$. As a result, 
\begin{eqnarray}
\label{thm:proxSARAH-finite-66}
    \frac{\Delta_\Psi}{\gamma\eta} &\geq& \frac{\mu}{32}\cdot\frac{S\tau}{2}\cdot\mathbb{E}\bigg[\frac{|\mathcal{I}_2^c|}{S\tau/2}\cdot\frac{\sum_{(s,k)\in\mathcal{I}_2^c}\big\|\cGshe(x_{s,k})\big\|^2}{|\mathcal{I}_2^c|}\,\Big|\,\cA_1^c(m_1^*)\cap\cA_2^c(m_2^*)\bigg] \\
    & \geq & \frac{\mu S\tau}{64}\mathbb{E}\bigg[\frac{\sum_{(s,k)\in\mathcal{I}_2^c}\big\|\cGshe(x_{s,k})\big\|^2}{|\mathcal{I}_2^c|}\,\Big|\,\cA_1^c(m_1^*)\cap\cA_2^c(m_2^*)\bigg] \nonumber\\
    & = & \frac{\mu S\tau}{64}\mathbb{E}\Big[\big\|\cGphe(x_{\mathrm{out}})\big\|^2\,\big|\,\cA_1^c(m_1^*)\cap\cA_2^c(m_2^*)\cap\cA_3\Big]\,, \nonumber
\end{eqnarray}
where the last equality is due to the definition of $x_{\mathrm{out}}$, $\cA_3$, and the fact that $\cGshe(\cdot)$ coincides with $\cGphe(\cdot)$ in $\mathcal{I}_2^c$. As a result, we have
$$\mathbb{E}\Big[\big\|\cGphe(x_{\mathrm{out}})\big\|^2\,\big|\,\cA\Big]\leq \frac{64\Delta_\Psi}{\gamma\eta\mu\tau S}\leq 4\epsilon.$$
Given the choice of $S$, $\gamma$, $\eta$, and the fact that $\tau = \lceil n/b\rceil$, the total sample complexity will be 
$$S(n+b\tau) = (n+b\tau)\cdot\bigg\lceil\frac{16\Delta_\Psi}{\tau\gamma\eta\mu\epsilon}\bigg\rceil = O\left(\frac{L\kappa_h^\delta\Delta_\Psi\sqrt{n}}{\mu\epsilon}\cdot\Big(1+\sqrt{b/\tau}\Big)\right),$$
which indicates an $O(\sqrt{n}\epsilon^{-1})$ sample complexity for all $b\leq O(\tau)$ (or equivalently, $b\leq O(\lceil\sqrt{n}\rceil)$). If larger batch size $b = \lceil n^\alpha\rceil$ with $\alpha\in(1/2,1]$ is taken, then we obtain an $O(n^\alpha\epsilon^{-1})$ complexity. 
\end{proof}

\subsection{Solving subproblems for $\bar{x}_{s,k+1}$ update}
From the above analysis, we can observe that the key purpose of introducing the epoch-wise constraint $x\in\cX_s$ is to restrict the iterations within a reasonably bounded region where the kernel conditioning regularity (Assumption \ref{assumption:kernel-conditioning}) is activated. 
However, this may also bring difficulties in solving the subproblem. In this subsection, we will discuss a few possible solution approaches to subproblem  \eqref{alg:proxSARAH-update}. 

According to our discussion in Section \ref{subsec:kernel-conditioning}, most of the popular kernels either takes the form of the composition of norm $h(x) := H(\|x\|)$, its block-separable variant $h(x) := \sum_i^mH_i(\|x_i\|)$, or the element-separable case $h(x) := \sum_{i=1}^dh_i(x_i)$. Next, let us discuss them one by one. 

\subsubsection{Element-separable cases}
First, let us briefly discuss the simplest scenario. When $h$ and $\phi$ are element-separable, as shall be discussed in Appendix \ref{section:multi-blocks}, the set $\cX_s$ will take a Cartesian product form of $\cX_s:=\cX_s^1\times\cdots\times\cX_s^d$ where each $\cX_s^i$ is a simple 1-dimensional closed interval and subproblem \eqref{alg:proxSARAH-update} becomes separable. For each element $x_i$, it reduces to solving a problem of form 
\begin{equation}
    \label{eqn:separable}
    \min_{x_i\in\RR}\,\, a_i\cdot x_i + \eta\phi_i(x_i) + h_i(x_i)\quad\mbox{s.t.}\quad x_i\in[b_i,c_i],
\end{equation}
for some constants $a_i,b_i,c_i$. As a 1-dimensional convex problem, it can be easily solved by 
\begin{enumerate}
    \item[(i).] Suppose problem \eqref{eqn:separable} has a closed form solution when removing the constraint $x_i\in[b_i,c_i]$, and we denote this solution by $x_i^*$. If $x_i^*\in[b_i,c_i]$, then it is optimal to problem \eqref{eqn:separable}, otherwise the optimal solution will be the better one between $\{b_i,c_i\}$.
    \item[(ii).] Suppose problem \eqref{eqn:separable} does not have a closed form solution even without constraint $x_i\in[b_i,c_i]$. Then we can apply either golden section search or other one-dimensional optimization method to obtain a solution. When golden section search is applied, at most $O(\ln \epsilon_{\mathrm{tol}}^{-1})$ time is required to obtain a point that is $\epsilon_{\mathrm{tol}}$-close to the optimal solution. 
\end{enumerate}
Therefore, as a large class of kernel functions, the element-separable kernels are always easy to handle, regardless of the availability of a closed form solution. 
\subsubsection{General non-element-separable cases}
In this subsection, we will consider the general block-separable case where each $h_i(x_i)$ is a general kernel. When $x = [x_1,\cdots,x_m]$ is partitioned in to $m$ blocks, similar to the element-separable case, the set $\cX_s$ will take a Cartesian product form of $\cX_s:=\cX_s^1\times\cdots\times\cX_s^m$. Then subproblem \eqref{alg:proxSARAH-update} will reduce to solving 
\begin{equation}
    \label{eqn:separable-block}
    \min_{x_i}\,\, \langle u_i,x_i\rangle + \eta\phi_i(x_i) + h_i(x_i)\quad\mbox{s.t.}\quad x_i\in\cX_s^i,
\end{equation}
for some vector $u_i$, and for each block $x_i$, $1\leq i\leq m$.  

Due to the separability of the subproblems, it is sufficient to restrict the discussion to the single block case where $m=1$. From now on, we will focus on this single block scenario and discuss how it can be solved. First of all, let us bound the number of times that the constraint $x\in\cX_s$ is active. 
\begin{lemma}
    \label{lemma:event-active}
    For any $S$ epochs generated by Algorithm \ref{algorithm:proxSARAH}, define the set $\mathcal{I}_4$ as
    $$\mathcal{I}_4:=\left\{(s,k) :  \mbox{constraint } x\in\cX_s\mbox{ is active at iteration } (s,k)\right\},$$
    then it holds that 
    $\mathbb{E}\big[|\mathcal{I}_4|\big] \leq \frac{32L\kappa_h^\delta\Delta_\Psi}{\mu\delta^2}\cdot\left(1+\sqrt{\frac{7n}{2b^2}}\right) \leq \frac{2\epsilon}{\delta^2}\cdot S\tau$.
\end{lemma}
\begin{proof}
    Ignoring the restricted primal gradient mapping terms in Lemma \ref{lemma:proxSARAH-grad-map-mid} yields   
\begin{eqnarray} 
    \label{haha}
    \frac{\Delta_\Psi}{\gamma\eta}  \geq  \mathbb{E}\left[\sum_{s=1}^S\sum_{k=0}^{\tau_s-1}D_h(\bar{x}_{s,k+1},x_{s,k})\right] 
    \geq  \mathbb{E}\left[\frac{\mu}{2}\sum_{(s,k)\in\mathcal{I}_4} \|\bar{x}_{s,k+1}-x_{s,k}\|^2\right]\geq \frac{\mu\delta^2}{32}\mathbb{E}\big[|\mathcal{I}_4|\big],
\end{eqnarray}
where the last inequality is because Line 9 of Algorithm \ref{algorithm:proxSARAH}, which suggests $\|\bar{x}_{s,k+1}-x_{s,k}\|\geq\delta/4$ when the set constraint $x\in\cX_s$ is active. Dividing both sides by ${\mu\delta^2}/{32}$ and substitute the values of $\eta,\gamma,\tau,b$ and $S$ in Theorem \ref{theorem:proxSARAH-finite} proves the lemma. 
\end{proof}
\noindent It can be observed that the upper bound on $\mathbb{E}[|\mathcal{I}_4|]$ is at most an $O(\epsilon)$-fraction of  the total iteration number. And it decreases as the batch size $b$ increase, when we take a large batch size $b = O(\sqrt{n})$, the factor $\sqrt{7n/2b^2}=O(1)$. Moreover, we should note that in the second inequality of \eqref{haha}, we adopted a very loose bound by omitting all the $(s,k)\notin\mathcal{I}_4$ and using  $\mu$ to lower bound $\mu_h([x_{s,k},\bar{x}_{s,k+1}])$, which is potentially much larger than $\mu$. Therefore, the actual cardinality of $\mathcal{I}_4$ can potentially be much smaller than the bound in Lemma \ref{lemma:event-active}.

Overall, on the average, the constraint $x\in\cX_s$ will become active for at most a constant amount of time. Therefore, a convenient heuristic in this case will be first solving the subproblem \eqref{alg:proxSARAH-update} without the constraint $x\in\cX_{s}$, then in most cases we will obtain a point inside $\cX_{s}$, which will also be optimal to the original subproblem \eqref{alg:proxSARAH-update} with constraint. In these cases, if the kernel $h$ allows a closed form solution for the unconstrained variant of \eqref{alg:proxSARAH-update}, then such a closed form solution can be utilized for most of the iterations. Otherwise, one can use proximal gradient  method \cite{beck2017first} to solve \eqref{alg:proxSARAH-update} without constraint. As the iterates' distance to the optimal solution is non-expansive for proximal gradient method, all iterations will stay in a well bounded area in which the condition number of $h$ is controlled by the kernel conditioning regularity, hence providing an $O(\kappa_h^\delta\ln \epsilon_{\mathrm{tol}}^{-1})$ iterations complexity for any target tolerance $\epsilon_{\mathrm{tol}}$. 
However, if the solving the unconstrained version of \eqref{alg:proxSARAH-update} gives a point outside $\cX_s$, then we will have to consider the original constrained subproblem \eqref{alg:proxSARAH-update}. In this case, if the problem does not have nonsmooth term, i.e. $\phi=0$, then one may use projected gradient method that still has an $O(\kappa_h^\delta\ln \epsilon_{\mathrm{tol}}^{-1})$ iterations complexity. Now, suppose $\phi\neq0$ and the proximal operator of $\mathrm{id}_{\cX_s}+\phi$ is not available, then we can solve the following splitting reformulation
\begin{equation}
    \label{prob:splitting}
    \min_{x,y}\,\, \frac{h(x)+h(y)}{2} + \phi(x) + \mathrm{id}_{\cX_s}(y)\quad\mbox{s.t.}\quad x-y = 0.
\end{equation}
For the linear consensus constraint $x-y=0$, the corresponding coefficient matrix is $[I_{d\times d},-I_{d\times d}]$ and its condition number is 1. Then many primal-dual algorithms can achieve an $O(\kappa_h^\delta\ln \epsilon_{\mathrm{tol}}^{-1})$ complexity for finding an $\epsilon_{\mathrm{tol}}$-optimal solution, see \cite{zhu2023unified}. Due to Lemma \ref{lemma:event-active}, we only need to deal with this scenario for limited times. This is also how we implement the subproblem solvers in the experiments.

\subsubsection{Composition of norm cases}
Finally, we consider a special case of the composition of norm kernels. According to previous discussion, we only need to discuss the single block case $h(x) = H(\|x\|)$, and then the multi-block case will be straightforward due to the separable structure. Following the discussion of Lemma \ref{lemma:event-active}, the constraint $x\in\cX_s$ can be active for only limited iterations. In these cases, iterative methods can be used to solve the subproblem efficiently because the kernel conditioning regularity guarantees a mild condition number. While for the most cases, one can solve an unconstrained version of the subproblem \eqref{alg:proxSARAH-update}, which has the form
\begin{equation}
    \label{eqn:customize-gen}
    \min_{x}\,\, \langle u, x\rangle + \eta\phi(x) + H(\|x\|),
\end{equation}
for some vector $u\in\RR^d$. Suppose $H(\cdot)$ is a strictly convex, monotonically increasing, and nonnegative function, we discuss a few examples where \eqref{eqn:customize-gen} can be efficiently solved. 
\begin{example}
    \label{example:basic}
    Consider a basic scenario where $\phi(x) = 0$, then \eqref{eqn:customize-gen} reduces to $\min_{x}\,\, \langle u, x\rangle + H(\|x\|).$
    This problem can be solved by a 1-dimensional search.
\end{example}

Due to the monotonicity of $H(\cdot)$, one can observe that the optimal solution should take the form $x = -\alpha\cdot{u}/{\|u\|}$ for some scalar $\alpha\geq0$. Then problem \eqref{eqn:customize-gen} is equivalent to a 1-dimension problem 
\begin{equation}
    \label{eqn:customize-gen-1D}
    \min_{\alpha}\,\, H(\alpha)-\|u\|\cdot\alpha\quad\mathrm{s.t.}\quad   \alpha\geq0.
\end{equation}
Computing the objective gradient of \eqref{eqn:customize-gen-1D} gives $H'(\alpha)-\|u\|$. Because $H(\cdot)$ is strictly convex and increasing, we know $H'(\cdot)\geq0$ and $H'(\cdot)$ is increasing on $[0,+\infty)$, then we know $\alpha^* = 0$ if $H'(0) \geq \|u\|$. Otherwise, there is a unique solution $\alpha^*$ s.t. $H'(\alpha^*) = \|u\|$. 
In some cases, the equation $H'(\alpha) = \|u\|$ allows a closed form solution, then we can directly adopt it, see \cite{bolte2018first}. If no closed-form solution is available, then one can apply either Newton's method or a binary search to find the root. Based on this observation, let us consider a few more examples. 
\begin{example}
    \label{example:ell-1-norm}
    Consider $\ell_1$-regularization term $\phi(x) = \beta\|x\|_1$ for some $\beta>0$. Define the index sets $J_1:=\{i\in[d]:|u_i|\leq \beta\eta\}$ and $J_2=[d]\backslash J_1$. Then the optimal solution $x^*$ to problem \eqref{eqn:customize-gen} will satisfy $x^*_{J_1} = 0$ and $x^*_{J_2} = \argmin\,\,\langle u_{J_2}-\eta\beta\cdot\mathrm{sign}(u_{J_2}),x_{J_2} \rangle + H(\|x_{J_2}\|)$.  
\end{example}
\noindent For any $x$, consider any $i\in J_1$, since $|u_i|\leq \beta\eta$, we know $u_ix_i+\beta\eta|x_i|\geq0$. Hence setting $x_i^*=0$ will minimize the objective value w.r.t. $x_i$, regardless of the other elements of $x$. According to the discussion of Example \ref{example:basic} and the definition of the index set $J_2$, we know $$\mathrm{sign}(x^*_{J_2}) = -\mathrm{sign}(u_{J_2}-\eta\beta\cdot\mathrm{sign}(u_{J_2})) = -\mathrm{sign}(u_{J_2}),$$
therefore $\langle u_{J_2}-\eta\beta\cdot\mathrm{sign}(u_{J_2}),x^*_{J_2} \rangle = \langle u_{J_2},x^*_{J_2} \rangle + \eta\beta\|x^*_{J_2}\|_1$ and $x^*_{J_2}$ also solves the original subproblem $\min\langle u_{J_2},x_{J_2} \rangle +\eta\beta\|x_{J_2}\|_1 +  H(\|x_{J_2}\|)$.\vspace{0.2cm} 

In fact, this result can be  generalized to the group $\ell_1$/$\ell_2$ norm that promotes group sparsity.  
\begin{example}
    \label{example:ell-1-ell-2-norm}
    Suppose $x = [x_1,x_2,\cdots,x_m]$ can be separated into $m$ groups, where each $x_i\in\RR^{d_i}$ is a subvector of $x$.   Consider the group $\ell_1/\ell_2$-regularization $\phi(x) = \sum_{i=1}^m\beta\|x_i\|$ for some $\beta>0$. Then problem \eqref{eqn:customize-gen} can be solved by the following procedure:\vspace{0.1cm}\\ 
    \emph{(i)}. For any $i\in[m]$, let $u_i$ be the subvector of $u$ that corresponds to $x_i$.  Let us define the index sets $J_1:=\{i\in[m]:\|u_i\|\leq \beta\eta\}$ and $J_2=[m]\backslash J_1$. Then for any $i\in J_1$, set $x^*_i = 0$ for any $i\in J_1$. \vspace{0.1cm}\\
    \emph{(ii)}. Construct and solve a new problem 
    $\alpha^* = \argmin_{\alpha\in\RR^{|J_2|}} \,\, \sum_{i\in J_2}(\eta\beta-\|{u}_i\|)\cdot\alpha_i + H(\|\alpha\|)$.\vspace{0.1cm}\\
    \emph{(iii)}. The optimal solution to problem \eqref{eqn:customize-gen} is $x_i^*=0$ for $\forall i\in J_1$, and  $x_i^* = -\frac{\alpha_i^*\cdot{u}_i}{\|{u}_i\|}$ for $\forall i\in J_2.$
\end{example}
\noindent Similar to Example \ref{example:ell-1-norm}, for any $i\in J_1$, we must have  $\langle u_i,x_i\rangle+\beta\eta\|x_i\|\geq0$. Hence setting $x_i^*=0$ will minimize the objective value w.r.t. $x_i$, regardless of the other groups of $x$.
Through a similar argument to Example \ref{example:basic}, each $x_i$ with $i\in J_2$ should take the form of $x_i = -\alpha_i\cdot {u}_i/\|{u}_i\|$ when it is optimal. Hence $\|x_i\|=\alpha_i$ and $\|x\| = \|\alpha\|$. Then we can rewrite the above problem as 
$$\min_{\alpha\in\RR^{|J_2|}} \,\, \sum_{i\in J_2}(\eta\beta-\|{u}_i\|)\cdot\alpha_i + H(\|\alpha\|) 
 \quad\mbox{s.t.}\quad \alpha\geq 0.$$
because $\eta\beta-\|{u}_i\|<0$ for all $i\in J_2$, the $\alpha\geq0$ constraint can be relaxed. Then we can apply the approach for Example \ref{example:basic} to obtain the optimal $\alpha$. \vspace{0.2cm}

As a summary for this subsection, Lemma \ref{lemma:event-active} indicates that the constraint $x\in\cX_s$ can be active for at most $O(\epsilon)$-fraction of the total iterations. Therefore, most of the subproblems \eqref{alg:proxSARAH-update} will actually be unconstrained. If this subproblem allows closed-form solution or can be efficiently evaluated without the constraint $x\in\cX_s$, then we can first ignore this constraint and obtain a solution $\tilde{x}$. If $\tilde{x}\in\cX_s$, then it will be solution to the subproblem \eqref{alg:proxSARAH-update}. Otherwise, we solve the original constrained problem with an appropriate iterative algorithm. Because the KC-regularity guarantees a mild condition number for the subproblem, it will take $O(\kappa_h^\delta\ln\epsilon^{-1}_{\mathrm{tol}})$ iterations for any tolerance $\epsilon_{\mathrm{tol}}>0$.

\section{Instance-free complexity under dual gradient mapping}
\label{section:GrdMap-new}
In Section \ref{section:VR}, we have established an improved $O(\sqrt{n}\epsilon^{-1})$ complexity for finding $\epsilon$-small squared primal gradient mapping. By Corollary \ref{corollary:mismatch}, having $\|\cGphl(x_{\text{out}})\|^2\leq \epsilon$ implies $\mathrm{dist}^2(0,\partial\Psi(x_{\text{out}}))\leq 4L_h^2(\cX)\cdot\epsilon$ for some output $x_{\text{out}}$, where $\cX$ denote the convex hull of all the iterations. In particular, the reason why we use the mismatch factor for $\cX$ instead of the output $x_{\text{out}}$ is that, for stochastic algorithms, $x_{\text{out}}$ is often randomly selected among all iterations. It provides a very desirable $O(\sqrt{n}\epsilon^{-1})$ complexity for finding solution with $O(\epsilon)$-small Fr\'echet measure  when $L_h^2(\cX)$ is mild, which clearly depends on the landscape of the input instance. Therefore, such a bound may fail for hard instances like Example \ref{example:counterexample-1} whose mismatch factor is unreasonably large or even unbounded. Therefore, it is also important to obtain a robust and stable instance-free (worst-case) complexity that holds for all problem instances. Note that the KC-regularity guarantees an $O(1)$ constant mismatch for the dual gradient mapping $\cDphl(\cdot)$, in this section, we will illustrate how to obtain the instance-free complexity for bounding the Fr\'echet measure by analyzing the dual gradient mapping.

\subsection{An adaptive step size control for BPG method}
\label{subsection:TBPG}
Because such an instance-free bound is not available for the basic deterministic setting,  let us start the discussion from the basic deterministic case for the ease of understanding. First of all, we propose a simple mechanism to adaptively determine the step sizes: 
\begin{equation}
    \label{defn:TBPG}
    x_{k+1} = \Tphlk\big(x_k,\nabla f(x_k)\big)\qquad\mbox{with}\qquad \lambda_k = \min\left\{\frac{1}{2L},\,\frac{\mu\delta}{3\rho},\,\frac{\mu\delta}{\|\nabla f(x_k)\|+\rho}\right\}
\end{equation}
where $L$ is introduced by Assumption \ref{assumption:L-smad},  $\mu$ is introduced by Assumption \ref{assumption:SC}, $\delta$ chosen so that $\kappa_h^\delta$ is mild, see Assumption \ref{assumption:kernel-conditioning}, and $\rho$ comes from the following bounded subgradient assumption on the non-differentiable term $\phi$, which will be used throughout Section \ref{section:GrdMap-new}. 

\begin{assumption}
\label{assumption:Lipschtiz-h}
There exists a constant $\rho>0$ such that $\sup_{u\in\partial \phi(x)}\|u\|\leq \rho$ for any $x\in\RR^d$.
\end{assumption}
\noindent In particular, if $\phi = 0$, then $\rho = 0$, the step size rule reduces to $\lambda_k = \min\big\{\frac{1}{2L},\,\frac{\mu\delta}{\|\nabla f(x_k)\|}\big\}$. Intuitively, requiring $\lambda_k\leq\frac{1}{2L}$ ensures that \eqref{defn:TBPG} is a descent step, while requiring $\lambda_k\leq \frac{\mu\delta}{\|\nabla f(x_k)\|+\rho}$ ensures that 
\begin{equation}
\label{lm:TBPG-descent-0}
\mu\|x_{k+1}-x_k\| \leq \|\nabla h(x_{k+1}) - \nabla h(x_k)\| \overset{\eqref{lm:grd-vs-grdmap-1}}{\leq} \mu\delta\cdot\frac{\|\nabla f(x_k)+u_{k+1}\|}{\|\nabla f(x_k)\|+\rho}\leq \mu\delta, 
\end{equation}
where $u_{k+1}\in\partial \phi(x_{k+1})$ and it satisfies $\|u_{k+1}\|\leq\rho$. That is, this part of step size design guarantees that $\|x_{k+1}-x_{k}\|\leq \delta$ so that kernel conditioning can be activated and more results can be exploited. Finally, $\lambda_k\leq \frac{\delta\mu}{3\rho}$ is only a technical requirement that simplifies the proof.  
Given this observation, we can apply the kernel condition regularity to obtain the following result for the adaptive scheme \eqref{defn:TBPG}.
 
\begin{lemma}
\label{lemma:TBPG-descent}
Under Assumptions \ref{assumption:L-smad}, \ref{assumption:kernel-conditioning}, \ref{assumption:SC}, and \ref{assumption:Lipschtiz-h}, the update \eqref{defn:TBPG} simultaneously satisfies both  
$$\Psi(x_{k+1})\leq  \Psi(x_k) -\frac{3L\mu_h([x_{k},x_{k+1}])}{2}\cdot\|x_{k+1}-x_{k}\|^2$$
and
$$\Psi(x_{k+1}) \leq  \Psi(x_k) - \frac{\min\left\{\rho\mu\delta,\,\,3\min\big\{ \frac{1}{2L}, \frac{\mu\delta}{3\rho}\big\}\cdot\big\|\cDphlk(x_k)\big\|^2\right\}}{4L_h([x_{k},x_{k+1}])}.$$
Moreover, the two successive iterates are $\delta$-close: $\|x_{k+1}-x_k\|\leq\delta$.  
\end{lemma}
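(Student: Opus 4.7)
The starting point is the optimality condition of the Bregman proximal subproblem defining $x_{k+1}=\Tphlk(x_k,\nabla f(x_k))$: there exists $u\in\partial\phi(x_{k+1})$ with
$$\nabla f(x_k)+u\;=\;\lambda_k^{-1}\bigl(\nabla h(x_k)-\nabla h(x_{k+1})\bigr)\;=\;\cDphlk(x_k),$$
so that the new gradient mapping coincides with the subdifferential-based residual. Combining the convexity inequality $\phi(x_{k+1})-\phi(x_k)\le\langle u,x_{k+1}-x_k\rangle$ with the extended descent lemma (Lemma \ref{lemma:descent-lemma}) for $f$, and using the identity $\langle\nabla h(x_{k+1})-\nabla h(x_k),x_{k+1}-x_k\rangle=D_h(x_{k+1},x_k)+D_h(x_k,x_{k+1})$, I would obtain the symmetrized descent bound
$$\Psi(x_{k+1})-\Psi(x_k)\;\le\;-(\lambda_k^{-1}-L)\,D_h(x_{k+1},x_k)\;-\;\lambda_k^{-1}\,D_h(x_k,x_{k+1}).$$
Because the step-size rule enforces $\lambda_k\le 1/(2L)$, both coefficients are nonnegative, and in particular $\lambda_k^{-1}-L\ge L$ and $\lambda_k^{-1}\ge 2L$.

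The first stated inequality is then immediate: apply strong convexity of $h$ restricted to the segment $[x_k,x_{k+1}]$ in the form $D_h(y,x)\ge\tfrac{\mu_h([x,y])}{2}\|y-x\|^2$ to \emph{both} Bregman terms in the symmetrized descent. The $L+2L=3L$ combined prefactor is exactly where the $\tfrac{3L\mu_h([x_k,x_{k+1}])}{2}\|x_{k+1}-x_k\|^2$ in the claim comes from; keeping the second Bregman term rather than discarding it is essential here.

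The distance bound $\|x_{k+1}-x_k\|\le\delta$ I would read directly off Lemma \ref{lemma:grd-vs-grdmap} applied with $v=\nabla f(x_k)$: it produces the same subgradient $u\in\partial\phi(x_{k+1})$ satisfying $\mu_h([x_k,x_{k+1}])\|x_{k+1}-x_k\|\le\lambda_k\|\nabla f(x_k)+u\|$. Using $\|u\|\le\rho$ (Assumption \ref{assumption:Lipschtiz-h}), $\mu_h\ge\mu$ (Assumption \ref{assumption:kernel-conditioning}), and the third clause of the step-size rule $\lambda_k\le\mu\delta/(\|\nabla f(x_k)\|+\rho)$, the triangle inequality immediately yields $\mu\|x_{k+1}-x_k\|\le\lambda_k(\|\nabla f(x_k)\|+\rho)\le\mu\delta$.

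The second, $\cDphlk$-based descent is the delicate part and hinges on a case split over which of the three candidates attains $\lambda_k$. Invoking Lemma \ref{lemma:grd-vs-grdmap} a second time with $\|\nabla f(x_k)+u\|=\|\cDphlk(x_k)\|$ yields $D_h(x_{k+1},x_k)\ge\lambda_k^2\|\cDphlk(x_k)\|^2/(2L_h([x_k,x_{k+1}]))$; plugging this into the symmetrized descent gives a descent of order $\lambda_k\|\cDphlk(x_k)\|^2/L_h([x_k,x_{k+1}])$. When $\lambda_k=\min\{1/(2L),\mu\delta/(3\rho)\}$ (the first two candidates), this is already the $\|\cDphlk\|^2$-branch of the $\min$ in the claim. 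The main obstacle is the third branch $\lambda_k=\mu\delta/(\|\nabla f(x_k)\|+\rho)$, on which $\|\cDphlk(x_k)\|$ is a priori uncontrolled. The unlocking observation is that this branch being active forces $\mu\delta/(\|\nabla f(x_k)\|+\rho)\le\mu\delta/(3\rho)$, hence $\|\nabla f(x_k)\|\ge 2\rho$; combined with $\|u\|\le\rho$ this gives $\|\cDphlk(x_k)\|\ge\|\nabla f(x_k)\|-\rho\ge(\|\nabla f(x_k)\|+\rho)/3$, which converts $\lambda_k\|\cDphlk(x_k)\|^2$ into the absolute lower bound $\rho\mu\delta$ (up to a universal constant) and yields the $\rho\mu\delta/L_h$ branch. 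To track the exact $1/4$ constant in the statement I would carry both Bregman terms through the computation rather than discarding $D_h(x_k,x_{k+1})$.
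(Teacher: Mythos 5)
Your proposal is correct and follows essentially the same route as the paper: symmetrized descent from the extended descent lemma plus the three-point property, strong convexity applied to both Bregman terms for the first inequality, the optimality condition plus the step-size cap for $\|x_{k+1}-x_k\|\le\delta$, and the same case split on the active branch of $\lambda_k$ with the key observation that the third branch forces $\|\nabla f(x_k)\|\ge 2\rho$. The paper's proof also keeps both Bregman terms (bounding each by $\lambda_k^2\|\cDphlk(x_k)\|^2/(2L_h)$) to obtain the $3/4$ prefactor, exactly as you anticipate.
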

\begin{proof}

First, let us establish the descent results of this lemma. By standard  analysis, we have
\begin{eqnarray}
\label{lm:TBPG-descent-1}
\Psi(x_{k+1}) & \leq & \Psi(x_k) - \Big(\frac{1}{\lambda_k}-L\Big)D_h(x_{k+1},x_k) - \frac{1}{\lambda_k}D_h(x_k,x_{k+1})\nonumber\\
& \leq & \Psi(x_k) - \frac{1}{2\lambda_k}D_h(x_{k+1},x_k) - \frac{1}{\lambda_k}D_h(x_k,x_{k+1})\\
& \leq & \Psi(x_k) -\frac{3L\mu_h([x_{k},x_{k+1}])}{2}\cdot\|x_{k+1}-x_{k}\|^2\nonumber,
\end{eqnarray}
where the second inequality is due to $\lambda_k\leq 1/2L$. By \cite[Theorem 2.1.5, Eq.(2.1.10)]{nesterov2018lectures}, we also have 
\begin{eqnarray}
\label{lm:TBPG-descent-2}
D_h(x_{k+1},x_{k}) \geq  \frac{\|\nabla h(x_{k})-\nabla h(x_{k+1})\|^2}{2L_h([x_{k},x_{k+1}])} = \frac{\lambda_k^2\cdot\|\cDphlk(x_k)\|^2}{2L_h([x_{k},x_{k+1}])}.
\end{eqnarray}
A similar inequality also holds for $D_h(x_k,x_{k+1})$. Suppose $\lambda_k = \frac{\mu\delta}{\|\nabla f(x_k)\|+\rho}$, then this situation may only happen if  $\|\nabla f(x_k)\|\geq2\rho$ such that  $\frac{\mu\delta}{\|\nabla f(x_k)\|+\rho}\leq\frac{\mu\delta}{3\rho}$. In this situation, with $L\leq\frac{1}{2\lambda_k}$, the second row of \eqref{lm:TBPG-descent-1} and \eqref{lm:TBPG-descent-2} indicate that  
\begin{eqnarray} 
\Psi(x_{k+1}) & \leq & \Psi(x_k) - \frac{3\lambda_k\cdot\|\cDphlk(x_k)\|^2}{4L_h([x_{k},x_{k+1}])} = \Psi(x_k) - \frac{3\|\cDphlk(x_k)\|}{4L_h([x_{k},x_{k+1}])}\cdot\frac{\mu\delta\|\nabla f(x_k)+u_{k+1}\|}{\|\nabla f(x_k)\|+\rho},\nonumber
\end{eqnarray}
where the last inequality is due to the fact that $\cDphlk(x_k) = \nabla f(x_k)+u_{k+1}$. Because $\|\nabla f(x_k)\|\geq2\rho$ and $\|u_{k+1}\|\leq\rho$, we have $\|\cDphlk(x_k)\|\geq\rho$ and 
$$\frac{\mu\delta\|\nabla f(x_k)+u_{k+1}\|}{\|\nabla f(x_k)\|+\rho}\geq \frac{\mu\delta(\|\nabla f(x_k)\|-\|u_{k+1}\|)}{\|\nabla f(x_k)\|+\rho}\geq\frac{\mu\delta}{3}.$$
Consequently, we have 
$$\Psi(x_{k+1}) \leq  \Psi(x_k) - \frac{\rho\mu\delta}{4L_h([x_{k},x_{k+1}])}.$$
If $\lambda_k = \min\big\{ \frac{1}{2L}, \frac{\mu\delta}{3\rho}\big\}$, the second row of \eqref{lm:TBPG-descent-1} and \eqref{lm:TBPG-descent-2} indicate that 
\begin{eqnarray} 
\Psi(x_{k+1}) \leq \Psi(x_k) - \frac{3\min\big\{ \frac{1}{2L}, \frac{\mu\delta}{3\rho}\big\}}{4L_h([x_{k},x_{k+1}])}\cdot\big\|\cDphlk(x_k)\big\|^2\nonumber.
\end{eqnarray}
Therefore, no matter which value $\lambda_k$ takes, it will at least achieve the minimum descent among the two cases. Hence we complete proof of the lemma.   
\end{proof}

Combining the above results, we can obtain the following bound on the iterations. 

\begin{lemma}
\label{lemma:TBPG-limit-movement}
Consider the update \eqref{defn:TBPG}, for any target accuracy $\epsilon\leq\max\big\{\rho^2, \frac{2L\rho\mu\delta}{3}\big\}$,  we have
\begin{equation} 
\Psi(x_{k+1}) \leq \Psi(x_k)-\frac{3}{4}\sqrt{\frac{\epsilon}{\kappa_h^\delta}\cdot\min\bigg\{1\,, \frac{2L\mu\delta}{3\rho}\bigg\}}\cdot\|x_{k+1}-x_{k}\|,\nonumber
\end{equation} 
as long as $\big\|\cDphlk(x_k)\big\|^2\geq\epsilon$. Denote $T_\epsilon:=\min\big\{k: \|\cDphlk(x_k)\|^2\leq \epsilon, k\geq 0 \big\}$, then 
    \begin{equation} 
        \max_{0\leq k\leq T_\epsilon} \|x_k-x_0\|\leq R_\epsilon:= \frac{4}{3}\sqrt{\max\bigg\{1\,,\frac{3\rho}{2L\mu\delta}\bigg\}}\cdot\frac{\sqrt{\kappa_h^\delta}\Delta_\Psi}{\sqrt{\epsilon}}.\nonumber
    \end{equation} 
\end{lemma}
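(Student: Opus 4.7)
The plan is to combine the two descent inequalities from Lemma \ref{lemma:TBPG-descent} via a geometric-mean trick, then telescope to bound the total path length $\sum_k\|x_{k+1}-x_k\|$.

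\textbf{Step 1 (Set up the two descents).} By Lemma \ref{lemma:TBPG-descent}, for every $k$ we simultaneously have the quadratic-in-step descent
$$\Psi(x_k)-\Psi(x_{k+1})\geq \tfrac{3L\mu_h([x_k,x_{k+1}])}{2}\,\|x_{k+1}-x_k\|^2\;=:\;A_k\|x_{k+1}-x_k\|^2,$$
and the gradient-driven descent
$$\Psi(x_k)-\Psi(x_{k+1})\geq \frac{\min\!\big\{\rho\mu\delta,\;3\min\{\tfrac1{2L},\tfrac{\mu\delta}{3\rho}\}\,\|\cDphlk(x_k)\|^2\big\}}{4L_h([x_k,x_{k+1}])}\;=:\;B_k.$$
Lemma \ref{lemma:TBPG-descent} also guarantees $\|x_{k+1}-x_k\|\le\delta$, so by Assumption \ref{assumption:kernel-conditioning} the local condition number satisfies $\kappa_h([x_k,x_{k+1}])\le \kappa_h^\delta$, i.e.\ $L_h([x_k,x_{k+1}])\le \kappa_h^\delta\,\mu_h([x_k,x_{k+1}])$.

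\textbf{Step 2 (Simplify $B_k$ under the assumption $\|\cDphlk(x_k)\|^2\ge\epsilon$).} I would split into the two cases that determine which term achieves $\min\{\tfrac{1}{2L},\tfrac{\mu\delta}{3\rho}\}$. If $\tfrac{1}{2L}\le\tfrac{\mu\delta}{3\rho}$ (equivalently $2L\mu\delta/(3\rho)\ge 1$), the inner quantity equals $\tfrac{1}{2L}$, and the condition $\epsilon\le \tfrac{2L\rho\mu\delta}{3}$ in the hypothesis ensures $3\epsilon/(2L)\le\rho\mu\delta$, so the outer min equals $3\epsilon/(2L)$. In the opposite case, the inner min is $\tfrac{\mu\delta}{3\rho}$, and $\epsilon\le\rho^2$ forces the outer min to be $\epsilon\mu\delta/\rho$. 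A unified bookkeeping yields
$$B_k\;\ge\;\frac{3\epsilon}{4L_h([x_k,x_{k+1}])}\cdot\min\!\Big\{\tfrac{1}{2L},\tfrac{\mu\delta}{3\rho}\Big\}\;=\;\frac{3\epsilon}{8L\,L_h([x_k,x_{k+1}])}\cdot\min\!\Big\{1,\tfrac{2L\mu\delta}{3\rho}\Big\}.$$

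\textbf{Step 3 (Geometric-mean trick).} Since both $A_k\|x_{k+1}-x_k\|^2$ and $B_k$ lower-bound the nonnegative quantity $\Psi(x_k)-\Psi(x_{k+1})$, so does their geometric mean:
$$\Psi(x_k)-\Psi(x_{k+1})\;\ge\;\sqrt{A_kB_k}\cdot\|x_{k+1}-x_k\|.$$
Using $L_h/\mu_h\le\kappa_h^\delta$ from Step 1, the product simplifies as
$$A_kB_k\;\ge\;\frac{3L\mu_h([x_k,x_{k+1}])}{2}\cdot\frac{3\epsilon\,\min\{1,2L\mu\delta/(3\rho)\}}{8L\,L_h([x_k,x_{k+1}])}\;\ge\;\frac{9\epsilon}{16\kappa_h^\delta}\cdot\min\!\Big\{1,\tfrac{2L\mu\delta}{3\rho}\Big\},$$
which gives exactly the claimed per-step descent
$$\Psi(x_k)-\Psi(x_{k+1})\;\ge\;\tfrac{3}{4}\sqrt{\tfrac{\epsilon}{\kappa_h^\delta}\,\min\{1,\tfrac{2L\mu\delta}{3\rho}\}}\cdot\|x_{k+1}-x_k\|.$$

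\textbf{Step 4 (Telescoping and bounding the excursion).} For any $k\le T_\epsilon$, Step 3 applies at every index $0,\ldots,k-1$. Summing and telescoping gives
$$\tfrac{3}{4}\sqrt{\tfrac{\epsilon}{\kappa_h^\delta}\,\min\{1,\tfrac{2L\mu\delta}{3\rho}\}}\sum_{j=0}^{k-1}\|x_{j+1}-x_j\|\;\le\;\Psi(x_0)-\Psi(x_k)\;\le\;\Delta_\Psi.$$
The triangle inequality $\|x_k-x_0\|\le\sum_{j=0}^{k-1}\|x_{j+1}-x_j\|$ then yields
$$\|x_k-x_0\|\;\le\;\tfrac{4}{3}\sqrt{\tfrac{\kappa_h^\delta}{\epsilon}\,\max\{1,\tfrac{3\rho}{2L\mu\delta}\}}\cdot\Delta_\Psi\;=\;R_\epsilon,$$
which is the stated bound.

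The main obstacle I anticipate is verifying the case analysis of Step 2: one must check carefully that, across both regimes for $\min\{1/(2L),\mu\delta/(3\rho)\}$, the hypothesis $\epsilon\le\max\{\rho^2,2L\rho\mu\delta/3\}$ always selects the gradient-driven branch of the outer min in $B_k$, so that the $\epsilon$-dependence surfaces cleanly in the final product $A_kB_k$. Everything else is a direct invocation of Lemma \ref{lemma:TBPG-descent}, the kernel-conditioning assumption, and a telescoping argument.
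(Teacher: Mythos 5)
Your proof is correct and follows essentially the same approach as the paper, which also combines the two descent inequalities of Lemma \ref{lemma:TBPG-descent} via a geometric-mean estimate before telescoping and applying the triangle inequality (the paper passes through the arithmetic mean $\tfrac{a+b}{2}\ge\sqrt{ab}$, but this is the same idea as your direct argument that $X\ge a$, $X\ge b$ implies $X\ge\sqrt{ab}$). One small clarification on the point you flagged as the potential obstacle in Step 2: the hypothesis $\epsilon\le\max\{\rho^2,\,2L\rho\mu\delta/3\}$ does \emph{not} guarantee that the outer min in $B_k$ is achieved by the $\|\cDphlk(x_k)\|^2$-dependent branch; it is instead equivalent to $\rho\mu\delta\ge 3\min\{\tfrac1{2L},\tfrac{\mu\delta}{3\rho}\}\epsilon$, so that together with $\|\cDphlk(x_k)\|^2\ge\epsilon$, \emph{both} branches of the outer min are at least $3\min\{\tfrac1{2L},\tfrac{\mu\delta}{3\rho}\}\epsilon$, from which your bound $B_k\ge\tfrac{3\epsilon}{8L\,L_h([x_k,x_{k+1}])}\min\{1,\tfrac{2L\mu\delta}{3\rho}\}$ follows regardless of which branch is active.
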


\begin{proof}
    First of all, our requirement on the target accuracy indicates that $\rho\mu\delta\geq3\min\big\{ \frac{1}{2L}, \frac{\mu\delta}{3\rho}\big\}\cdot\epsilon$. Then the second inequality of Lemma \ref{lemma:TBPG-descent} indicates that 
    \begin{equation}
    \label{lm:TBPG-limit-movement-1}
          \Psi(x_{k+1}) \leq  \Psi(x_k) - \frac{3\min\big\{ \frac{1}{2L}, \frac{\mu\delta}{3\rho}\big\}}{4L_h([x_{k},x_{k+1}])}\cdot\epsilon
    \end{equation}
    as long as $\|\cDphlk(x_k)\|^2\geq\epsilon$.
    Combined with the first inequality of Lemma \ref{lemma:TBPG-descent}, we have  
    \begin{eqnarray}
        \Psi(x_{k+1}) & \leq &  \Psi(x_k) - \frac{1}{2}\bigg(\frac{3L\mu_h([x_{k},x_{k+1}])}{2}\|x_{k+1}-x_{k}\|^2+\frac{3\min\big\{ \frac{1}{2L}, \frac{\mu\delta}{3\rho}\big\}}{4 L_h([x_{k},x_{k+1}])}\cdot\epsilon\bigg)\nonumber\\
        & \leq & \Psi(x_k)-\sqrt{\frac{9\mu_h([x_{k},x_{k+1}])}{16L_h([x_{k},x_{k+1}])}\cdot 2L\cdot\min\bigg\{ \frac{1}{2L}\,, \frac{\mu\delta}{3\rho}\bigg\}\cdot\epsilon\|x_{k+1}-x_{k}\|^2}\nonumber\\
        & \leq & \Psi(x_k)-\frac{3}{4}\sqrt{\frac{\epsilon}{\kappa_h^\delta}\cdot\min\bigg\{1\,, \frac{2L\mu\delta}{3\rho}\bigg\}}\cdot\|x_{k+1}-x_{k}\|\,,\nonumber
    \end{eqnarray}
    where the last inequality is due to KC-regularity and $\|x_k-x_{k+1}\|\leq \delta$ (Lemma \ref{lemma:TBPG-descent}). This proves the first part of the lemma. Next, we show the bound on the maximum movement before $T_\epsilon$. By the definition of $T_\epsilon$, we have $\|\nabla \cDphlk(x_k)\| ^2 > \epsilon$ for $k \leq T_\epsilon-1$. As a result    $$\sum_{k=0}^{T_\epsilon-1}\|x_{k+1}-x_k\|\leq\frac{4}{3}\sqrt{\max\bigg\{1\,,\frac{3\rho}{2L\mu\delta}\bigg\}}\cdot\frac{\sqrt{\kappa_h^\delta}\Delta_\Psi}{\sqrt{\epsilon}} = R_\epsilon.$$
    Applying triangle inequality to the above bound proves the rest of the lemma.  
\end{proof}

\noindent Denote the instance's level set as $\mathrm{Lev}_0:=\left\{x: \Psi(x)\leq\Psi(x_0)\right\}$, then the following theorem holds. 

\begin{theorem}
    \label{theorem:TBPG-deterministic}
    Under the setting of Lemma \ref{lemma:TBPG-limit-movement} and let $\cX_\epsilon := \mathrm{Lev}_0\cap B(x_0,R_\epsilon)$ be a compact set, then 
    $$ T_\epsilon\leq \max\bigg\{\frac{8L}{3},\frac{4\rho}{\mu\delta}\bigg\}\cdot\frac{L_h(\cX_\epsilon)\Delta_\Psi}{\epsilon}$$
    where $L_h(\cX_\epsilon)$ may depend on $\epsilon$ for hard instances. Moreover, the solution $x_{T_\epsilon}$ and $x_{T_\epsilon+1}$ satisfy
    $$\big\|\mathcal{D}_{\phi,h}^{\lambda_{T_\epsilon}}(x_{T_\epsilon})\big\|^2\leq \epsilon\qquad\mbox{and}\qquad\mathrm{dist}^2\big(0, \partial \Psi(x_{T_\epsilon+1})\big)\leq \Big(1+\frac{\kappa_h^{\delta}}{2}\Big)^{\!2}\!\!\cdot\epsilon\,.$$
    In the special case where $\phi=0$, we have $\|\nabla\Psi(x_{T_\epsilon})\|^2\leq \epsilon.$
\end{theorem}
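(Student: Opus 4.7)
The overall plan is to combine the two descent inequalities from Lemma \ref{lemma:TBPG-descent} with the containment result of Lemma \ref{lemma:TBPG-limit-movement} to get a uniform bound on $L_h([x_k,x_{k+1}])$, then sum the descent and invoke Lemma \ref{lemma:GradMap-nonsmooth} at termination. First, I would observe that the first descent inequality in Lemma \ref{lemma:TBPG-descent} shows $\Psi$ is non-increasing along the iteration, so $x_k\in\mathrm{Lev}_0$ for every $k$. Combined with the bound $\max_{0\leq k\leq T_\epsilon}\|x_k-x_0\|\leq R_\epsilon$ from Lemma \ref{lemma:TBPG-limit-movement}, this forces $x_k\in\cX_\epsilon=\mathrm{Lev}_0\cap B(x_0,R_\epsilon)$, and since $B(x_0,R_\epsilon)$ is convex (and noting that $L_h$ of a segment is bounded by $L_h$ of any set containing it, which we take to be the relevant compact region), we obtain $L_h([x_k,x_{k+1}])\leq L_h(\cX_\epsilon)$ for all $k\leq T_\epsilon-1$.

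Next, for any $k\leq T_\epsilon-1$ we have $\|\cDphlk(x_k)\|^2>\epsilon$ by definition of $T_\epsilon$. The hypothesis $\epsilon\leq\max\{\rho^2,\tfrac{2L\rho\mu\delta^2}{3}\}$ is exactly what makes $3\min\{\tfrac{1}{2L},\tfrac{\mu\delta}{3\rho}\}\epsilon\leq\rho\mu\delta$, so the $\min$ in the second descent inequality of Lemma \ref{lemma:TBPG-descent} resolves to the gradient-mapping term. Substituting and using the uniform bound on $L_h$, we get
\begin{equation*}
\Psi(x_{k+1})\leq \Psi(x_k)-\frac{3\min\{1/(2L),\mu\delta/(3\rho)\}}{4L_h(\cX_\epsilon)}\cdot\epsilon.
\end{equation*}
Telescoping from $k=0$ to $T_\epsilon-1$ and bounding the left by $\Delta_\Psi$ yields
\begin{equation*}
T_\epsilon\leq \frac{4L_h(\cX_\epsilon)\Delta_\Psi}{3\epsilon\cdot\min\{1/(2L),\mu\delta/(3\rho)\}}=\max\Big\{\frac{8L}{3},\frac{4\rho}{\mu\delta}\Big\}\cdot\frac{L_h(\cX_\epsilon)\Delta_\Psi}{\epsilon},
\end{equation*}
which is exactly the advertised complexity.

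Finally, for the stationarity claims, $\|\mathcal{D}_{\phi,h}^{\lambda_{T_\epsilon}}(x_{T_\epsilon})\|^2\leq\epsilon$ holds by the definition of $T_\epsilon$. For the subdifferential bound at $x_{T_\epsilon+1}$, I would apply Lemma \ref{lemma:GradMap-nonsmooth} with $x=x_{T_\epsilon}$, $x_\lambda^+=x_{T_\epsilon+1}$, and $\lambda=\lambda_{T_\epsilon}$. The precondition $\|x-x_\lambda^+\|\leq\delta$ is supplied by the last assertion of Lemma \ref{lemma:TBPG-descent}. Since the step size rule enforces $\lambda_{T_\epsilon}\leq 1/(2L)$, we have $L\lambda_{T_\epsilon}\kappa_h^\delta\leq\kappa_h^\delta/2$, so Lemma \ref{lemma:GradMap-nonsmooth} delivers $\mathrm{dist}^2(0,\partial\Psi(x_{T_\epsilon+1}))\leq(1+\kappa_h^\delta/2)^2\epsilon$. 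The special case $\phi=0$ then follows from the identity $\cDphl(x)=\nabla\Psi(x)$ already derived in Section \ref{section:preliminary}.

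The main conceptual hurdle is not any single inequality but rather reconciling the adaptive, problem-dependent nature of $L_h([x_k,x_{k+1}])$ with the desire for a closed-form iteration bound. Lemma \ref{lemma:TBPG-limit-movement} is the load-bearing result here: without an a priori compact region of iterates, the right-hand side of the descent inequality could degenerate and the telescoping argument would fail. Once that containment is in hand, the rest is essentially bookkeeping through the definitions of $R_\epsilon$ and $\lambda_{T_\epsilon}$.
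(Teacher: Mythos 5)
Your proposal is correct and follows essentially the same route as the paper: containment of the iterates in $\cX_\epsilon$ via Lemma \ref{lemma:TBPG-limit-movement}, telescoping the per-step descent $\frac{3\min\{1/(2L),\mu\delta/(3\rho)\}}{4L_h(\cX_\epsilon)}\epsilon$ (which is exactly \eqref{lm:TBPG-limit-movement-1}), and then invoking Lemma \ref{lemma:GradMap-nonsmooth} with $\lambda_{T_\epsilon}\leq 1/(2L)$. One small imprecision: the hypothesis $3\min\{1/(2L),\mu\delta/(3\rho)\}\epsilon\leq\rho\mu\delta$ does not force the inner $\min$ in Lemma \ref{lemma:TBPG-descent} to ``resolve to the gradient-mapping term'' (since $\|\cDphlk(x_k)\|^2$ can exceed $\epsilon$ by an arbitrary amount); rather, it ensures that \emph{both} branches of the $\min$ are lower-bounded by $3\min\{1/(2L),\mu\delta/(3\rho)\}\epsilon$, which is what the telescoping actually needs.
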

\noindent As a remark, for the polynomial kernel where $\mu$ and $\delta$ are $O(1)$, the maximal iterations before finding a point with $\epsilon$-small Fr\'echet measure is reduced to $ T_\epsilon\leq O\big(\frac{\max\{L,\rho\}\cdot L_h(\cX_\epsilon)\Delta_\Psi}{\epsilon}\big)$.
 
\begin{proof}
    By Lemma \ref{lemma:TBPG-descent} and \ref{lemma:TBPG-limit-movement}, it is straightforward that  $\big\{x_k : k\leq T_\epsilon\big\}\subseteq \mathrm{Lev}_0\cap B(x_0,R_\epsilon)=\cX_\epsilon$. Hence,   $L_h([x_{k},x_{k+1}])\leq L_h(\cX_\epsilon)<+\infty$ for $k\leq T_\epsilon-1$. Substituting this upper bound to \eqref{lm:TBPG-limit-movement-1} and then summing the resulting inequalities up for $k\leq T_\epsilon-1$ yields 
    \begin{equation}
        \label{thm:TBPG-det-1}
        \Delta_\Psi\geq \sum_{k=0}^{T_\epsilon-1}\frac{3\min\big\{ \frac{1}{2L}, \frac{\mu\delta}{3\rho}\big\}}{4L_h([x_{k},x_{k+1}])}\cdot\epsilon \geq 
        \frac{3\min\big\{ \frac{1}{2L}, \frac{\mu\delta}{3\rho}\big\}}{4L_h(\cX_\epsilon)} \epsilon \cdot T_\epsilon\,,\nonumber
    \end{equation} 
    which proves the first inequality of the theorem. Note that by Lemma \ref{lemma:TBPG-descent}, our adaptive step size control strategy guarantees that $\|x_{T_\epsilon}-x_{T_\epsilon+1}\|\leq \delta$, then the second inequality of the theorem directly follows Lemma \ref{lemma:GradMap-nonsmooth} and the fact that $\lambda_k\leq 1/2L$. For the differentiable case where $\phi = 0$, the result directly follows the definition of $T_\epsilon$ and the fact that $\cDphl(\cdot) = \nabla f(\cdot) = \nabla \Psi(\cdot)$ for any $\lambda>0$.
\end{proof}

As commented in the theorem, the constant $L_h(\cX_\epsilon)$ with $R_\epsilon=O(1/\sqrt{\epsilon})$ potentially depends on $\epsilon$. For example, for a degree-$(\alpha+2)$ polynomial kernel $h(x) =  {\|x\|^2}/{2} +  {\|x\|^{\alpha+2}}/{(\alpha+2)}$, then the worst-case pessimistic estimation gives $L_h(\cX_\epsilon) = O(\epsilon^{-\frac{\alpha}{2}})$, which, by Theorem \ref{theorem:TBPG-deterministic}, suggests an $O(\epsilon^{-\frac{\alpha+2}{2}})$ instance-free complexity for making the Fr\'echet measure $\epsilon$-small, whose tightness is confirmed by the constructing the following worst-case problem instance.

\begin{proposition}
\label{proposition:example}
For any predetermined accuracy $\epsilon>0$ and polynomial kernel $h(x) = \frac{\|x\|^2}{2} + \frac{\|x\|^{2+\alpha}}{2+\alpha}$ with even integer $\alpha\geq2$,  the  instance in Example \ref{example:counterexample-1} satisfies: (i). $f$ is $(\alpha^2+4)$-smooth adaptable to $h$. (ii). For any $\epsilon<8/\alpha^2$ and $x_1\geq1$, any solution $x$ with  $\|\nabla \Psi(x)\|^2\leq\epsilon$ should satisfy $x_1\geq \Omega\big(\frac{1}{\sqrt{\epsilon}\ln^2\epsilon^{-1}}\big)$. (iii). Let $\{(x_1^k,x_2^k)\}$ be generated by the standard BPG \eqref{defn:BPG-standard} with $\lambda<1/L$ or our adaptive  variant \eqref{defn:TBPG}, suppose the initial point is $x^0=(1,0)$, then  $\min\left\{\|\nabla \Psi(x^k)\|^2:k\leq T\right\}\geq \Tilde{\Omega}\big(T^{-\frac{2}{\alpha+2}}\big).$
\end{proposition}

The proof of Proposition \ref{proposition:example} is relegated to Appendix \ref{appendix:example:counter-1}. Through this proposition,  the potential $\epsilon$-dependence in $L_h(\cX_\epsilon)$ provided by Theorem \ref{theorem:TBPG-deterministic} is in fact a tight characterization of the iteration complexity for the smooth-adaptable problem classes associated with polynomial kernels.  In addition, the argument (ii) indicates that the $O(1/\sqrt{\epsilon})$ bound on $R_\epsilon$ is also tight. In fact, 
we can easily generalize this property to any kernel that satisfies Assumption \ref{assumption:kernel-conditioning}, hence proving the tightness of the $R_\epsilon$. 

Finally, it is also worth noting that for the vanilla BPG update \eqref{defn:BPG-standard} with constant step size $\lambda_k = 1/2L$, a slightly different analysis can also provide a similar bound  $R_\epsilon = O(\Delta_\Psi/\sqrt{\epsilon})$ and $T_\epsilon\leq O(L_h(\cX_\epsilon)\Delta_\Psi/\epsilon)$. However, as the step size control plays a significant role in the next section where a SPIDER style variance reduction is introduced, we only discuss \eqref{defn:TBPG} here for succinctness.

\subsection{Adaptive step size control with stochastic variance reduction}
\label{subsec:TBPG-SVR}
Note that the exact gradient norm $\|\nabla f(\cdot)\|$ is required in the adaptive step size control scheme \eqref{defn:TBPG}, which is inaccessible in the stochastic setting. Moreover, as both $R_\epsilon$ and $T_\epsilon$ are random variables, the complex interplay between them makes a sheer in-expectation analysis insufficient to bound the sample complexity. Instead, a high probability bound will be favorable in the following discussion. Basically, we will still adopt the framework of Algorithm \ref{algorithm:proxSARAH}, while removing the bound constraint $\cX_s$ and the early stop mechanism (Line 9) of each epoch, the responsibility to maintain kernel conditioning will be inherited by the step size control policy. In other words, we set $\cX_s=\RR^d$ in Algorithm \ref{algorithm:proxSARAH}. In addition, we modify the update \eqref{alg:proxSARAH-update} with the following update under adaptive step size control:  
\begin{equation}
    \label{defn:TBPG-svr-1}
    \bar{x}_{s,k+1} = \Tshesk\big(x_{s,k},v_{s,k}\big)\qquad\mbox{with}\qquad \eta_{s,k} = \min\left\{\frac{1}{2\kappa_h^\delta L},\,\frac{\mu\delta}{3\rho},\,\frac{\mu\delta}{\|v_{s,k}\|+\rho}\right\}\,,\quad
\end{equation}
\begin{equation}
    \label{defn:TBPG-svr-2}
    x_{s,k+1} = x_{s,k} + \gamma_{s,k}(\bar{x}_{s,k+1}-x_{s,k}) \qquad\mbox{with}\qquad \gamma_{s,k} = \min\left\{1\,,\,\frac{\sqrt{\epsilon}/2L(\kappa_h^\delta)^2}{\|\nabla h(x_{s,k}) -\nabla h(\bar{x}_{s,k+1})\|}\right\}\,.
\end{equation}
By slightly modifying the analysis of \eqref{lm:TBPG-descent-0} and \eqref{lm:descent-proxSARAH-finite-1}, we obtain the following descent result for the update \eqref{defn:TBPG-svr-1} and \eqref{defn:TBPG-svr-2}, whose proof is omitted.
\begin{lemma}
    \label{lemma:TBPG-svr-descent}
    Given Assumption \ref{assumption:kernel-conditioning} and \ref{assumption:L-smad-finite-sum}, the update \eqref{defn:TBPG-svr-1} and \eqref{defn:TBPG-svr-2} satisfy  $\|\bar{x}_{s,k+1}\!-\!x_{s,k}\| \!\leq\! \delta$ and 
    \begin{equation}
        \Psi(x_{s,k+1}) \leq \Psi(x_{s,k}) - \frac{\gamma_{s,k}}{2\eta_{s,k}}\!\cdot\! D_h(\bar{x}_{s,k+1},x_{s,k}) - \frac{\gamma_{s,k}}{\eta_{s,k}}\!\cdot\! D_h(x_{s,k},\bar{x}_{s,k+1}) + \gamma_{s,k}\|\bar{x}_{s,k+1}-x_{s,k}\|\!\cdot\!\|\mathcal{E}_{s,k}\|\nonumber
    \end{equation} 
    where $\mathcal{E}_{s,k}=\nabla f(x_{s,k})-v_{s,k}$ is the gradient estimation error. 
\end{lemma}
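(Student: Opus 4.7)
The target lemma has two assertions: the step-boundedness $\|\bar{x}_{s,k+1}-x_{s,k}\|\leq\delta$ and the descent inequality with the two (asymmetric) Bregman divergence terms plus an error term. I would prove the step-boundedness first because it mirrors \eqref{lm:TBPG-descent-0} almost verbatim: the optimality condition of the proximal subproblem defining $\bar{x}_{s,k+1}$ produces some $u_{s,k+1}\in\partial\phi(\bar{x}_{s,k+1})$ with $\nabla h(\bar{x}_{s,k+1})-\nabla h(x_{s,k})=-\eta_{s,k}(v_{s,k}+u_{s,k+1})$; combining $\mu$-strong convexity of $h$, the triangle inequality $\|v_{s,k}+u_{s,k+1}\|\leq\|v_{s,k}\|+\rho$ (Assumption \ref{assumption:Lipschtiz-h}), and the definition of $\eta_{s,k}$ gives $\mu\|\bar{x}_{s,k+1}-x_{s,k}\|\leq\eta_{s,k}(\|v_{s,k}\|+\rho)\leq\mu\delta$. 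This is exactly the analogue of \eqref{lm:TBPG-descent-0} with $v_{s,k}$ replacing $\nabla f(x_k)$.

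For the descent inequality I would carry out four bookkeeping steps. (i) Apply the extended descent lemma (Lemma \ref{lemma:descent-lemma}) to the pair $(x_{s,k+1},x_{s,k})$ and use convexity of $h$ via Jensen's inequality to obtain $D_h(x_{s,k+1},x_{s,k})\leq\gamma_{s,k}D_h(\bar{x}_{s,k+1},x_{s,k})$, so that $f(x_{s,k+1})\leq f(x_{s,k})+\gamma_{s,k}\langle\nabla f(x_{s,k}),\bar{x}_{s,k+1}-x_{s,k}\rangle+L\gamma_{s,k}D_h(\bar{x}_{s,k+1},x_{s,k})$. (ii) Use convexity of $\phi$ on the interpolation to bound $\phi(x_{s,k+1})\leq\phi(x_{s,k})+\gamma_{s,k}(\phi(\bar{x}_{s,k+1})-\phi(x_{s,k}))$. (iii) Exploit the optimality condition of $\bar{x}_{s,k+1}$ in its three-point form: combining $v_{s,k}+u_{s,k+1}=\eta_{s,k}^{-1}(\nabla h(x_{s,k})-\nabla h(\bar{x}_{s,k+1}))$ with convexity of $\phi$ and the identity $\langle\nabla h(b)-\nabla h(a),b-a\rangle=D_h(b,a)+D_h(a,b)$ yields
\begin{equation*}
\langle v_{s,k},\bar{x}_{s,k+1}-x_{s,k}\rangle+\phi(\bar{x}_{s,k+1})-\phi(x_{s,k})\leq-\eta_{s,k}^{-1}\bigl(D_h(\bar{x}_{s,k+1},x_{s,k})+D_h(x_{s,k},\bar{x}_{s,k+1})\bigr).
\end{equation*}
(iv) Decompose $\nabla f(x_{s,k})=v_{s,k}+\mathcal{E}_{s,k}$, multiply through by $\gamma_{s,k}$, and assemble the pieces to get
\begin{equation*}
\Psi(x_{s,k+1})\leq\Psi(x_{s,k})+\gamma_{s,k}\langle\mathcal{E}_{s,k},\bar{x}_{s,k+1}-x_{s,k}\rangle-\gamma_{s,k}\Bigl(\tfrac{1}{\eta_{s,k}}-L\Bigr)D_h(\bar{x}_{s,k+1},x_{s,k})-\tfrac{\gamma_{s,k}}{\eta_{s,k}}D_h(x_{s,k},\bar{x}_{s,k+1}).
\end{equation*}
A final Cauchy--Schwarz on the error inner product produces the $\gamma_{s,k}\|\bar{x}_{s,k+1}-x_{s,k}\|\cdot\|\mathcal{E}_{s,k}\|$ term stated in the lemma.

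The only tight spot is the absorption step that converts $\gamma_{s,k}(\eta_{s,k}^{-1}-L)$ into the claimed $\gamma_{s,k}/(2\eta_{s,k})$. For this I would use the first branch of the step-size rule $\eta_{s,k}\leq1/(2\kappa_h^\delta L)$ together with $\kappa_h^\delta\geq1$ (Assumption \ref{assumption:kernel-conditioning}), which forces $\eta_{s,k}\leq1/(2L)$ and therefore $\eta_{s,k}^{-1}-L\geq 1/(2\eta_{s,k})$. I do not expect any genuine obstacle: the whole argument is a proximal-gradient analysis combined with Jensen's inequality and the symmetric three-point identity, exactly as advertised by the hint pointing at \eqref{lm:TBPG-descent-0} and \eqref{lm:descent-proxSARAH-finite-1}. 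The one thing to be careful about is that, unlike the proxSARAH analysis of Lemma \ref{lemma:descent-proxSARAH-finite}, here I must \emph{retain} the asymmetric divergence $D_h(x_{s,k},\bar{x}_{s,k+1})$ rather than discard it, since later sections will use it to control $\|\nabla h(x_{s,k})-\nabla h(\bar{x}_{s,k+1})\|^2$ (i.e.\ the new gradient mapping $\cDphl$) via Lemma \ref{lemma:grd-vs-grdmap}.
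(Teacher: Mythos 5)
Your proof is correct and matches the route the paper's hint points to (combining the step-boundedness argument of \eqref{lm:TBPG-descent-0} with the interpolation-descent chain of \eqref{lm:descent-proxSARAH-finite-1}), so there is no substantive discrepancy, only one bookkeeping choice worth flagging. To bound the interpolated Bregman divergence you invoke Jensen on the convex map $y\mapsto D_h(y,x_{s,k})$ to get the linear estimate
\begin{equation*}
D_h(x_{s,k+1},x_{s,k})\le \gamma_{s,k}\,D_h(\bar{x}_{s,k+1},x_{s,k}),
\end{equation*}
whereas the paper's template in the proof of Lemma \ref{lemma:descent-proxSARAH-finite} instead goes through the quadratic bound $D_h(x_{s,k+1},x_{s,k})\le \tfrac{1}{2}L_h([x_{s,k},\bar x_{s,k+1}])\,\gamma_{s,k}^2\|\bar x_{s,k+1}-x_{s,k}\|^2\le \gamma_{s,k}^2\kappa_h^\delta D_h(\bar x_{s,k+1},x_{s,k})$, which needs the step-boundedness $\|\bar x_{s,k+1}-x_{s,k}\|\le\delta$ and Assumption \ref{assumption:kernel-conditioning} already at this stage. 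Both absorb correctly under the step-size rule: your version needs only $\eta_{s,k}\le 1/(2L)$, which follows from $\eta_{s,k}\le 1/(2\kappa_h^\delta L)$ and $\kappa_h^\delta\ge 1$, while the paper's version needs $\eta_{s,k}\le 1/(2\kappa_h^\delta L)$ together with $\gamma_{s,k}\le 1$; the Jensen route is a hair simpler because it uses kernel conditioning only once, in the absorption. One small remark on hypotheses: the lemma's statement lists only Assumptions \ref{assumption:kernel-conditioning} and \ref{assumption:L-smad-finite-sum}, but the step rule \eqref{defn:TBPG-svr-1} and your step-boundedness derivation also invoke $\|u_{s,k+1}\|\le\rho$ from Assumption \ref{assumption:Lipschtiz-h}, which is in force throughout Section \ref{section:GrdMap-new}; it is worth noting that this standing assumption is actually being used.
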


To establish the counterpart of Lemma \ref{lemma:TBPG-descent}, a high probability bound on $\mathcal{E}_{s,k}$ is required. However, simply applying the standard Azuma-Hoeffding inequality may incur additional dependence on problem dimension. To avoid such a dependence, we need the following large deviation bound for vector-valued martingale in 2-smooth normed spaces from \cite{juditsky2008large}. 

\begin{definition}
\label{definition:regular-space}
Let $(E,\trinorm{\cdot})$ denote a finite-dimensional space $E$ equipped with some norm $\trinorm{\cdot}$. We say the space $(E,\trinorm{\cdot})$ (and the norm $\trinorm{\cdot}$ on $E$) is $\kappa$-regular for some $\kappa\in[1,+\infty)$, if there exists a constant $\kappa_+\in[1,\kappa]$ and a norm $\trinorm{\cdot}_+$ on $E$ such that the function $p(x):=\trinorm{x}_+^2$ is $\kappa_+$-smooth and $\trinorm{x}_+$ is $\kappa/\kappa_+$-compatible with $\trinorm{\cdot}$. That is, for $\forall x,y\in E$, we have
$$p(x+y)\leq p(x) + \langle\nabla p(x),y\rangle + \kappa p(y)\qquad\mbox{and}\qquad\trinorm{x}^2\leq \trinorm{x}_+^2 \leq \frac{\kappa}{\kappa_+}\trinorm{x}^2.$$ 
\end{definition}
\noindent We should notice that the $\kappa$ and $\kappa_+$ here has nothing to do with the condition numbers that are widely used throughout the paper. 

\begin{theorem}[Theorem 2.1-(ii), \cite{juditsky2008large}]
    \label{theorem:MDS-LargeDev}
    Suppose $(E,\trinorm{\cdot})$ is $\kappa$-regular for some $\kappa\geq 1$ and $\{\zeta_t\}_{t\geq0}$ is an $E$-valued martingale difference sequence w.r.t. the filtration $\{\mathcal{F}_t\}_{t\geq0}$ and default $\zeta_0=0$. Suppose $\{\zeta_t\}_{t\geq0}$ satisfies the following light-tail property: 
    $$\mathbb{E}\bigg[\exp\left\{\frac{\trinorm{\zeta_t}^\alpha}{\sigma_t^\alpha}\right\}\,\big|\,\mathcal{F}_{t-1}\bigg]\leq \exp\{1\}, \quad \forall t\geq1.$$
    When $\alpha = 2$, for any $N,\gamma\geq0$, it holds that $$\mathrm{Prob}\left(\trinorm{\sum_{i=0}^N\zeta_i}\geq(\sqrt{\kappa}+\gamma)\sqrt{2\sum_{i=0}^N\sigma_i^2}\right)\leq \exp\left\{-\frac{\gamma^2}{3}\right\}.$$
\end{theorem}
Consider $(\mathbb{R}^d,\|\cdot\|)$ where $\|\cdot\|$ stands for the standard Euclidean (L-2) norm that we use throughout this paper. Setting $\kappa=\kappa_+=1$ and $\trinorm{\cdot}=\trinorm{\cdot}_+ = \|\cdot\|$ in Definition \ref{definition:regular-space}, then straight computation shows that $(\mathbb{R}^d,\|\cdot\|)$ is $1$-regular. As a result, we have the following bound for $\mathcal{E}_{s,k}.$

\begin{lemma}
    \label{lemma:TBPG-error}
    Suppose Assumptions \ref{assumption:kernel-conditioning} and  \ref{assumption:L-smad-finite-sum} hold. For any epoch $s\geq1$  and let $p_s = \frac{6q}{\pi^2s^2\tau}$ for some $q\in(0,1)$. Suppose we select $|\cB_{s,k}|=b_s$ for all $0\leq k\leq \tau-1$, then 
    \begin{equation*} 
        \|\mathcal{E}_{s,k}\|\leq \left(1+\sqrt{3\ln\left(\frac{1}{p_s}\right)}\right)\cdot\frac{\sqrt{2\tau\epsilon}\cdot L_{\max}}{\sqrt{b_s}\cdot L\kappa_h^\delta}
    \end{equation*}
    with probability at least $1-p_s$.
\end{lemma}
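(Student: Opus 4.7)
\textbf{Proof proposal for Lemma \ref{lemma:TBPG-error}.} The plan is to write $\mathcal{E}_{s,k}$ as a vector-valued martingale whose increments we can bound \emph{deterministically} in norm, and then invoke Theorem \ref{theorem:MDS-LargeDev} with $\kappa=1$ (valid since $(\mathbb{R}^d,\|\cdot\|)$ is $1$-regular, as remarked just below Theorem \ref{theorem:MDS-LargeDev}). Concretely, fix $s$ and let $\xi^s_{k',j}$ denote the $j$-th sample drawn (with replacement) from $\mathcal{B}_{s,k'}$. For $1\le k'\le k$ and $1\le j\le b_s$, set
\[
\zeta^s_{k',j} \;=\; \tfrac{1}{b_s}\Big[\big(\nabla f_{\xi^s_{k',j}}(x_{s,k'})-\nabla f_{\xi^s_{k',j}}(x_{s,k'-1})\big)-\big(\nabla f(x_{s,k'})-\nabla f(x_{s,k'-1})\big)\Big].
\]
Using the SARAH recursion \eqref{alg:sarah-small} and $v_{s,0}=\nabla f(x_{s,0})$ one verifies telescopically that $\mathcal{E}_{s,k}=\sum_{k'=1}^k\sum_{j=1}^{b_s}\zeta^s_{k',j}$, and ordering the indices $(k',j)$ lexicographically gives a martingale difference sequence with respect to the natural filtration.

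The main obstacle, and the only part that is non-routine, is obtaining a sharp \emph{almost-sure} bound on $\|\zeta^s_{k',j}\|$ in the correct units. I will proceed as follows. First, Lemma \ref{lemma:TBPG-svr-descent} guarantees $\|\bar x_{s,k'}-x_{s,k'-1}\|\le\delta$, and since $x_{s,k'}-x_{s,k'-1}=\gamma_{s,k'-1}(\bar x_{s,k'}-x_{s,k'-1})$, the segment $[x_{s,k'-1},x_{s,k'}]\subseteq[x_{s,k'-1},\bar x_{s,k'}]$ also has diameter at most $\delta$, so Assumption \ref{assumption:kernel-conditioning} yields $L_h([x_{s,k'-1},x_{s,k'}])\le\kappa_h^\delta\,\mu_h([x_{s,k'-1},x_{s,k'}])$. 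Second, I unpack the adaptive $\gamma_{s,k'-1}$ in \eqref{defn:TBPG-svr-2}: splitting on whether the minimum is attained by $1$ or by the explicit ratio, and using the standard lower bound $\|\nabla h(\bar x_{s,k'})-\nabla h(x_{s,k'-1})\|\ge \mu_h([x_{s,k'-1},\bar x_{s,k'}])\|\bar x_{s,k'}-x_{s,k'-1}\|$ together with monotonicity of $\mu_h$ under set inclusion, in both cases I obtain the uniform estimate
\[
\mu_h([x_{s,k'-1},x_{s,k'}])\,\|x_{s,k'}-x_{s,k'-1}\|\;\le\;\frac{\sqrt{\epsilon}}{2L(\kappa_h^\delta)^2}.
\]
Third, by Assumption \ref{assumption:L-smad-finite-sum} each $\nabla f_\xi$ is $L_\xi\,L_h([\cdot])$-Lipschitz on the segment, so
\[
\|\zeta^s_{k',j}\|\le\frac{L_{\max}\,L_h([x_{s,k'-1},x_{s,k'}])\,\|x_{s,k'}-x_{s,k'-1}\|}{b_s}\le\frac{L_{\max}\,\kappa_h^\delta}{b_s}\cdot\frac{\sqrt\epsilon}{2L(\kappa_h^\delta)^2}=\frac{L_{\max}\sqrt\epsilon}{2L\kappa_h^\delta\,b_s}\,=:\,\sigma^s_{k',j}.
\]

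The rest is a mechanical application of the concentration inequality. Because $\|\zeta^s_{k',j}\|\le\sigma^s_{k',j}$ almost surely, the sub-Gaussian tail hypothesis of Theorem \ref{theorem:MDS-LargeDev} with $\alpha=2$ is trivially met. Summing the squared bounds over at most $k\cdot b_s\le\tau b_s$ increments gives $\sum(\sigma^s_{k',j})^2\le\tau b_s\cdot L_{\max}^2\epsilon/(4L^2(\kappa_h^\delta)^2b_s^2)$. Applying Theorem \ref{theorem:MDS-LargeDev} with $\kappa=1$ and $\gamma=\sqrt{3\ln(1/p_s)}$ then yields, with probability at least $1-\exp(-\gamma^2/3)=1-p_s$,
\[
\|\mathcal{E}_{s,k}\|\le\big(1+\sqrt{3\ln(1/p_s)}\big)\sqrt{2\sum(\sigma^s_{k',j})^2}\;\le\;\big(1+\sqrt{3\ln(1/p_s)}\big)\cdot\frac{\sqrt{2\tau\epsilon}\,L_{\max}}{\sqrt{b_s}\,L\kappa_h^\delta},
\]
which is the claimed bound (any leftover factor of $\sqrt{2}$ can be absorbed into the constant in the definition of $\gamma_{s,k}$, or by keeping the sharper $k\le\tau-1$). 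The only place the new adaptive step-size rule \eqref{defn:TBPG-svr-2} enters the analysis is in converting $\gamma_{s,k'-1}\|\bar x_{s,k'}-x_{s,k'-1}\|$ into a dimensionless $\sqrt\epsilon$-scale movement; everything else is standard SARAH/SPIDER variance bookkeeping combined with the kernel-conditioning assumption.
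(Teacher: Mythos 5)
Your overall structure matches the paper exactly: same martingale decomposition of $\mathcal{E}_{s,k}$ into increments $\zeta_{k',j}^s$, same observation that $(\RR^d,\|\cdot\|)$ is $1$-regular so Theorem \ref{theorem:MDS-LargeDev} applies, and same use of the adaptive $\gamma_{s,k}$ rule together with kernel conditioning to get a deterministic sub-Gaussian parameter for each increment. However, the key step — bounding $\|\zeta_{k',j}^s\|$ almost surely — has a genuine gap.

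The intermediate estimate you invoke, $\mu_h\big([x_{s,k'-1},x_{s,k'}]\big)\,\|x_{s,k'}-x_{s,k'-1}\|\le\frac{\sqrt\epsilon}{2L(\kappa_h^\delta)^2}$, does not follow from the argument you give. You claim it by ``monotonicity of $\mu_h$ under set inclusion,'' but that monotonicity runs the \emph{wrong way}: since $[x_{s,k'-1},x_{s,k'}]\subseteq[x_{s,k'-1},\bar x_{s,k'}]$ and $\mu_h$ is a minimum over the set, you get $\mu_h\big([x_{s,k'-1},x_{s,k'}]\big)\ge\mu_h\big([x_{s,k'-1},\bar x_{s,k'}]\big)$. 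The step size rule \eqref{defn:TBPG-svr-2}, combined with the lower bound $\|\nabla h(\bar x_{s,k'})-\nabla h(x_{s,k'-1})\|\ge\mu_h\big([x_{s,k'-1},\bar x_{s,k'}]\big)\|\bar x_{s,k'}-x_{s,k'-1}\|$, only gives $\mu_h\big([x_{s,k'-1},\bar x_{s,k'}]\big)\,\|x_{s,k'}-x_{s,k'-1}\|\le\frac{\sqrt\epsilon}{2L(\kappa_h^\delta)^2}$ — with $\mu_h$ over the \emph{longer} segment. If you then push through $L_h([\text{short}])\le\kappa_h^\delta\mu_h([\text{short}])$ as you do, and correct the intermediate claim to what is actually provable, you pick up a spurious extra $\kappa_h^\delta$ factor, which propagates into the final bound. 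The correct argument, as in the paper's proof, pairs $L_h$ and $\mu_h$ over the \emph{same} (longer) segment: use $L_h([x_{s,k'-1},x_{s,k'}])\le L_h([x_{s,k'-1},\bar x_{s,k'}])$ (a $\max$, so monotonicity in the right direction) together with $\frac{L_h([x_{s,k'-1},\bar x_{s,k'}])}{\mu_h([x_{s,k'-1},\bar x_{s,k'}])}\le\kappa_h^\delta$, giving $L_h([x_{s,k'-1},x_{s,k'}])\,\|x_{s,k'}-x_{s,k'-1}\|\le\frac{\kappa_h^\delta\sqrt\epsilon}{2L(\kappa_h^\delta)^2}$ directly, without ever needing a bound on $\mu_h([\text{short}])\|\Delta x\|$.

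A second, smaller issue: your Lipschitz bound on $\|\zeta_{k',j}^s\|$ is missing a factor of $2$. The increment is a \emph{difference} of two Lipschitz-increments, $\big(\nabla f_\xi(x_{k'})-\nabla f_\xi(x_{k'-1})\big)-\big(\nabla f(x_{k'})-\nabla f(x_{k'-1})\big)$, so the triangle inequality gives $\|\zeta\|\le\frac{2L_{\max}L_h([x_{s,k'-1},x_{s,k'}])\|x_{s,k'}-x_{s,k'-1}\|}{b_s}$, not $L_{\max}L_h\cdots/b_s$. This missing $2$ is exactly why your computed final bound comes out a factor of $2$ \emph{smaller} than the statement; with both corrections (the $L_h$/$\mu_h$ pairing and the factor of $2$) you recover the paper's per-increment parameter $\sigma_{k',j}^s=\frac{L_{\max}\sqrt\epsilon}{L\kappa_h^\delta b_s}$, and Theorem \ref{theorem:MDS-LargeDev} with $\kappa=1$, $\gamma=\sqrt{3\ln(1/p_s)}$ then gives the claimed $(1+\gamma)\frac{\sqrt{2\tau\epsilon}L_{\max}}{\sqrt{b_s}L\kappa_h^\delta}$ exactly; your suggestion that the discrepancy ``can be absorbed into the constant in $\gamma_{s,k}$'' is not the correct resolution.
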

\begin{proof}
    Fix any epoch index $s\geq1$, consider the sequence $\{\zeta_{k,j}^s\}$ defined as  
    $$\zeta_{k,j}^s = \frac{1}{|\mathcal{B}_{s,k}|}\left(\big(\nabla f_{\xi_{k,j}^s}(x_{s,k})- \nabla f_{\xi_{k,j}^s}(x_{s,k-1})\big) - \big(\nabla f(x_{s,k})-\nabla f(x_{s,k-1})\big)\right).$$
    In the above definition, the index $k$ runs through $1\leq k\leq\tau-1$, and the index $j$ can take value from $1\leq j\leq |\mathcal{B}_{s,k}|$ given each $k$. For each $(k,j)$ in our index range, $\xi_{k,j}^s$ stands for the $j$-th sample from the batch $\mathcal{B}_{s,k}\subseteq[n]$. Then by direct computation, we have  
    $\mathcal{E}_{s,k} = \sum_{k'=1}^k\sum_{j=1}^{|\mathcal{B}_{s,k'}|} \zeta_{k',j}^s$ and $\{\zeta_{k,j}^s\}$ forms a martingale difference sequence if the index $(k,j)$ runs in a lexicographical order. Note that 
    \begin{eqnarray}
        \|\zeta_{k,j}^s\| &\leq& \frac{2L_{\max}L_h([x_{s,k-1},x_{s,k}])\|x_{s,k-1}-x_{s,k}\|}{b_s}\nonumber\\
        &\leq& \frac{2L_{\max}L_h([x_{s,k-1},x_{s,k}])}{b_s}\cdot\frac{\sqrt{\epsilon}/2L(\kappa_h^\delta)^2}{\|\nabla h({x}_{s,k-1})-\nabla h(\bar{x}_{s,k})\|}\cdot\|{x}_{s,k-1}-\bar{x}_{s,k}\|\nonumber\\
        &\leq& \frac{2L_{\max}L_h([x_{s,k-1},x_{s,k}])}{b_s}\cdot\frac{\sqrt{\epsilon}/2L(\kappa_h^\delta)^2}{\mu_h([x_{s,k-1},\bar{x}_{s,k}])\|{x}_{s,k-1}-\bar{x}_{s,k}\|}\cdot\|{x}_{s,k-1}-\bar{x}_{s,k}\|\nonumber\\
        &\leq& \frac{L_{\max}\cdot\sqrt{\epsilon}}{L\kappa_h^\delta\cdot b_s}\,,\nonumber
    \end{eqnarray} 
    where the last inequality is because $\|x_{s,k-1}-\bar{x}_{s,k}\|\leq \delta$, Assumption \ref{assumption:kernel-conditioning}, and the fact that 
    $$\gamma_{s,k}\leq 1\quad\Longrightarrow\quad [x_{s,k-1},x_{s,k}]\subseteq[x_{s,k-1},\bar{x}_{s,k}]\quad\Longrightarrow\quad L_h([x_{s,k-1},x_{s,k}])\leq L_h([x_{s,k-1},\bar{x}_{s,k}]).$$
    As this bound holds almost surely, we have $\EE\left[\exp\left\{\frac{\|\zeta_{k,j}^s\|^2}{(L_{\max}\sqrt{\epsilon}/L\kappa_h^\delta b_s)^2}\right\}\right] \leq \exp\{1\}$. Applying Theorem \ref{theorem:MDS-LargeDev} to this martingale difference sequence gives
    \begin{equation*} 
    \mathrm{Prob}\left(\|\mathcal{E}_{s,k}\| \geq \frac{(1+\gamma)\sqrt{2\tau\epsilon}\cdot L_{\max}}{\sqrt{b_s}\cdot L\kappa_h^\delta}\right)\leq \exp\left\{-\frac{\gamma^2}{3}\right\}.
    \end{equation*}
    Finally, setting $\gamma = \sqrt{3\ln (1/p_s)}$ gives $\exp\{-\gamma^2/3\} = p_s$, which proves the lemma.    
\end{proof}
Let us define $\tcDshesk(x_{s,k}):=\frac{\nabla h(x_{s,k})-\nabla h(\bar{x}_{s,k+1})}{\eta_{s,k}}$ as the stochastic surrogate of the the exact dual gradient mapping $\cDshesk(x_{s,k})$, then we have the following lemma. 
\begin{lemma}
\label{lemma:TBPG-svr-descent-final}
    Let us set $\tau = \lceil\sqrt{n}\rceil$, and $|\mathcal{B}_{s,k}|= b_s = 8\lceil\sqrt{n}\rceil(2+6\ln(1/p_s)) L_{\max}^2/L^2$. 
    For $(s,k)$-th iteration of the update \eqref{defn:TBPG-svr-1} and \eqref{defn:TBPG-svr-2}, as long as $\|\tcDshesk(x_{s,k})\|^2\geq\epsilon$, it holds w.p. at least $1-p_s$ that
    $$\Psi(x_{s,k+1}) \leq \Psi(x_{s,k}) - \frac{\sqrt{\epsilon}\|{x}_{s,k+1}-x_{s,k}\|}{4\kappa_h^\delta}.$$
    Suppose the target accuracy satisfies $\epsilon\leq \max\left\{2L\kappa_h^\delta\mu\delta\rho/3,\rho^2\right\}$, then we also have 
    $$\Psi(x_{s,k+1}) \leq \Psi(x_{s,k}) -  \frac{\min\{1/2\kappa_h^\delta L, \frac{\mu\delta}{3\rho}\}\cdot\epsilon}{4(\kappa_h^\delta)^2 L_h([x_{s,k},\bar{x}_{s,k+1}])}.$$
\end{lemma}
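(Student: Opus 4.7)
My plan is to start from the descent inequality in Lemma \ref{lemma:TBPG-svr-descent} while keeping the two Bregman divergence terms separate rather than collapsing them. The first preliminary step is to plug the chosen $\tau=\lceil\sqrt{n}\rceil$ and $b_s=8\tau(2+6\ln(1/p_s))L_{\max}^2/L^2$ into Lemma \ref{lemma:TBPG-error}; routine simplification using $(1+\sqrt{3\ln(1/p_s)})^2\le 2(1+3\ln(1/p_s))$ yields $\|\mathcal{E}_{s,k}\|\le \sqrt{\epsilon}/(2\kappa_h^\delta)$ with probability at least $1-p_s$. All subsequent arguments can be treated as deterministic conditional on this event.

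For the first inequality, I introduce $\Delta:=\nabla h(x_{s,k})-\nabla h(\bar{x}_{s,k+1})$ and $d:=\bar{x}_{s,k+1}-x_{s,k}$, so that $\|x_{s,k+1}-x_{s,k}\|=\gamma_{s,k}\|d\|$ and the hypothesis $\|\tcDshesk(x_{s,k})\|^2\ge\epsilon$ reads $\|\Delta\|\ge\eta_{s,k}\sqrt{\epsilon}$; Lemma \ref{lemma:TBPG-svr-descent} also gives $\|d\|\le\delta$. Applying the standard bound $D_h(y,x)\ge\|\nabla h(y)-\nabla h(x)\|^2/(2L_h([x,y]))$ to both divergences and summing with the coefficients from Lemma \ref{lemma:TBPG-svr-descent} yields a negative part at least $3\gamma_{s,k}\|\Delta\|^2/(4\eta_{s,k}L_h)$. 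Splitting one factor as $\eta_{s,k}\sqrt{\epsilon}$ and the other as $\mu_h\|d\|$ (strong convexity of $h$) and invoking $\mu_h/L_h\ge 1/\kappa_h^\delta$ from Assumption \ref{assumption:kernel-conditioning}, this negative part reduces to $3\sqrt{\epsilon}\|x_{s,k+1}-x_{s,k}\|/(4\kappa_h^\delta)$. The noise term $\gamma_{s,k}\|d\|\cdot\|\mathcal{E}_{s,k}\|$ is at most $\sqrt{\epsilon}\|x_{s,k+1}-x_{s,k}\|/(2\kappa_h^\delta)$, and the difference yields the first claim.

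For the second inequality I would instead keep the negative part in the form $\gamma_{s,k}\|\Delta\|\sqrt{\epsilon}/(4L_h)$ (obtained by dropping only one $\|\Delta\|$ factor) and split by which constraint is active. When $\gamma_{s,k}<1$, its definition pins $\gamma_{s,k}\|\Delta\|=\sqrt{\epsilon}/(2L(\kappa_h^\delta)^2)$, delivering descent $\ge\epsilon/(8L(\kappa_h^\delta)^2L_h)$. When $\gamma_{s,k}=1$ and $\eta_{s,k}\in\{1/(2\kappa_h^\delta L),\,\mu\delta/(3\rho)\}$, the bound $\|\Delta\|\ge\eta_{s,k}\sqrt{\epsilon}\ge\min\{1/(2\kappa_h^\delta L),\,\mu\delta/(3\rho)\}\sqrt{\epsilon}$ suffices. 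The main obstacle is the subcase $\gamma_{s,k}=1$ with $\eta_{s,k}=\mu\delta/(\|v_{s,k}\|+\rho)$ and $\|v_{s,k}\|>2\rho$, since $\eta_{s,k}$ may be arbitrarily small. The resolution is to exploit the first-order optimality identity $\Delta=\eta_{s,k}(v_{s,k}+u_{s,k+1})$ with $\|u_{s,k+1}\|\le\rho$ (Assumption \ref{assumption:Lipschtiz-h}) to obtain $\|\Delta\|\ge\mu\delta/3$, so the descent is $\ge\mu\delta\sqrt{\epsilon}/(12L_h)$. Combined with the fact that $\gamma_{s,k}=1$ forces $\sqrt{\epsilon}\ge 2L(\kappa_h^\delta)^2\mu\delta/3$ via $\|\Delta\|\le\sqrt{\epsilon}/(2L(\kappa_h^\delta)^2)$, the hypothesis $\epsilon\le\max\{2L\kappa_h^\delta\mu\delta\rho/3,\rho^2\}$ is exactly what is needed to verify $\min\{1/(2\kappa_h^\delta L),\,\mu\delta/(3\rho)\}\le\mu\delta(\kappa_h^\delta)^2/(3\sqrt{\epsilon})$, which converts this descent into the target form.
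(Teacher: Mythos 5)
Your proposal is correct and follows essentially the same structure as the paper's proof: the same SARAH error estimate reduces to $\|\mathcal{E}_{s,k}\|\le\sqrt{\epsilon}/(2\kappa_h^\delta)$ with probability $1-p_s$; the first inequality uses the Bregman–gradient co-coercivity bound together with $\|\Delta\|\geq\eta_{s,k}\sqrt{\epsilon}$, $\|\Delta\|\geq\mu_h\|d\|$, and $L_h/\mu_h\le\kappa_h^\delta$ (your only variation is to lower-bound both Bregman terms and then subtract the noise, where the paper first cancels the full-coefficient term against the noise and keeps the half-coefficient term — same result); and the second inequality rests on the same three-case split on which of the $\gamma$- and $\eta$-constraints is active, with the same key step in the awkward subcase ($\gamma=1$, $\eta_{s,k}=\mu\delta/(\|v_{s,k}\|+\rho)$, $\|v_{s,k}\|>2\rho$) of using $\Delta=\eta_{s,k}(v_{s,k}+u_{s,k+1})$ and $\|u_{s,k+1}\|\le\rho$ to get a floor of order $\mu\delta/3$. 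One small nit: the derived constraint $\sqrt{\epsilon}\ge 2L(\kappa_h^\delta)^2\mu\delta/3$ is not actually needed — the hypothesis $\epsilon\le\max\{2L\kappa_h^\delta\mu\delta\rho/3,\rho^2\}$ already verifies $\min\{1/(2\kappa_h^\delta L),\mu\delta/(3\rho)\}\le\mu\delta(\kappa_h^\delta)^2/(3\sqrt{\epsilon})$ via the short split on which term attains the max, so that observation is a harmless detour rather than an essential ingredient.
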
 
\begin{proof}
First of all, by Lemma \ref{lemma:TBPG-error}, setting $b_s=8\lceil\sqrt{n}\rceil(2+6\ln(1/p_s)) L_{\max}^2/L^2$ gives 
\begin{equation}
    \label{lm:TBPG-svr-descent-final-1} \mathrm{Prob}\left(\|\mathcal{E}_{s,k}\|\leq\frac{\sqrt{\epsilon}}{2\kappa_h^\delta}\right)\geq 1- p_s\,.
\end{equation} 
Given $\|\tcDshesk(x_{s,k})\|\geq\sqrt{\epsilon}$, we have with probability at least $1-p_s$ that 
\begin{eqnarray*}
    &&\frac{\gamma_{s,k}}{\eta_{s,k}}\cdot D_h(\bar{x}_{s,k+1},x_{s,k}) - \gamma_{s,k}\cdot\|\bar{x}_{s,k+1}-x_{s,k}\|\cdot\|\mathcal{E}_{s,k}\|\\
    &\geq&  \frac{\gamma_{s,k}}{\eta_{s,k}}\cdot \frac{\|\nabla h(x_{s,k})-\nabla h(\bar{x}_{s,k+1})\|^2}{2L_h([x_{s,k},\bar{x}_{s,k+1}])}-\gamma_{s,k}\cdot\|\bar{x}_{s,k+1}-x_{s,k}\|\cdot\frac{\sqrt{\epsilon}}{2\kappa_h^\delta}\\
    &\geq&  \frac{\gamma_{s,k}}{\eta_{s,k}}\cdot \frac{\mu_h([x_{s,k},\bar{x}_{s,k+1}])\cdot\|x_{s,k}-\bar{x}_{s,k+1}\|\cdot\|\nabla h(x_{s,k})-\nabla h(\bar{x}_{s,k+1})\|}{2L_h([x_{s,k},\bar{x}_{s,k+1}])}-\gamma_{s,k}\cdot\|\bar{x}_{s,k+1}-x_{s,k}\|\cdot\frac{\sqrt{\epsilon}}{2\kappa_h^\delta}\\
    & \geq & \gamma_{s,k}\|\bar{x}_{s,k+1}-x_{s,k}\|\cdot\left(\frac{\|\tcDshesk(x_{s,k})\|}{2\kappa_h^\delta}-\frac{\sqrt{\epsilon}}{2\kappa_h^\delta}\right)\\
    &\geq& 0\,.
\end{eqnarray*} 
Combining the above inequality with Lemma \ref{lemma:TBPG-svr-descent}, we have 
\begin{eqnarray}
    \label{lm:TBPG-svr-descent-final-2}
    \Psi(x_{s,k+1}) &\leq& \Psi(x_{s,k}) - \frac{\gamma_{s,k}}{2\eta_{s,k}}\cdot D_h(\bar{x}_{s,k+1},x_{s,k})\nonumber\\
    &\leq& \Psi(x_{s,k}) - \gamma_{s,k}\|\bar{x}_{s,k+1}-x_{s,k}\|\cdot\frac{\|\tcDshesk(x_{s,k})\|}{4\kappa_h^\delta}\\
    &=& \Psi(x_{s,k}) - \frac{\|{x}_{s,k+1}-x_{s,k}\|\cdot\|\tcDshesk(x_{s,k})\|}{4\kappa_h^\delta}\nonumber\\
    &\leq& \Psi(x_{s,k}) - \frac{\sqrt{\epsilon}\|{x}_{s,k+1}-x_{s,k}\|}{4\kappa_h^\delta}\,.\nonumber
\end{eqnarray}
This proves the first inequality of the lemma. Next, let us prove the $\Psi(x_{s,k+1}) \leq \Psi(x_{s,k}) - \Omega(\epsilon)$ result by discussing the following cases: \vspace{0.1cm}

\noindent\textbf{case 1.} 
When $\gamma_{s,k} = \frac{\sqrt{\epsilon}/2L(\kappa_h^\delta)^2}{\|\nabla h(x_{s,k}) -\nabla h(\bar{x}_{s,k+1})\|}$, regardless of $\eta_{s,k}$, the second row of \eqref{lm:TBPG-svr-descent-final-2} indicates that  
\begin{eqnarray*}
    \Psi(x_{s,k+1}) &\leq& \Psi(x_{s,k}) - \frac{\sqrt{\epsilon}\|\bar{x}_{s,k+1}-x_{s,k}\|/2L(\kappa_h^\delta)^2}{\|\nabla h(x_{s,k}) -\nabla h(\bar{x}_{s,k+1})\|}\cdot\frac{\|\tcDshesk(x_{s,k})\|}{4\kappa_h^\delta}\\
    &\leq & \Psi(x_{s,k}) - \frac{\sqrt{\epsilon}\cdot\|\tcDshesk(x_{s,k})\|}{8L(\kappa_h^\delta)^3L_h([x_{s,k},\bar{x}_{s,k+1}])}\\
    & \leq & \Psi(x_{s,k}) - \frac{\epsilon}{8L(\kappa_h^\delta)^3L_h([x_{s,k},\bar{x}_{s,k+1}])}.
\end{eqnarray*}

\noindent\textbf{case 2.} If $\gamma_{s,k} = 1$ and $\eta_{s,k} = \frac{\mu\delta}{\|v_{s,k}\|+\rho}$. This case may happen only if $\frac{\mu\delta}{\|v_{s,k}\|+\rho}\leq \frac{\mu\delta}{3\rho}$, namely, only if $\|v_{s,k}\|\geq2\rho.$ Note that $\tcDshesk(x_{s,k}) = v_{s,k}+u_{s,k+1}$ for some $u_{s,k+1}\in\partial \phi(\bar{x}_{s,k+1})$, in this case, we have $\|\tcDshesk(x_{s,k})\|\geq\|v_{s,k}\|-\rho\geq\rho$. Then the second row of \eqref{lm:TBPG-svr-descent-final-2} gives  
\begin{eqnarray}
\label{lm:TBPG-svr-descent-final-3}
    \Psi(x_{s,k+1}) &\leq& \Psi(x_{s,k}) - 1\cdot\|\bar{x}_{s,k+1}-x_{s,k}\|\cdot\frac{\|\tcDshesk(x_{s,k})\|}{4\kappa_h^\delta}\nonumber\\
    &\leq & \Psi(x_{s,k}) - \frac{\|\nabla h(x_{s,k})-\nabla h(\bar{x}_{s,k+1})\|}{L_h([x_{s,k},\bar{x}_{s,k+1}])}\cdot\frac{\|\tcDshesk(x_{s,k})\|}{4\kappa_h^\delta}\nonumber\\
    & = & \Psi(x_{s,k}) - \frac{\|\tcDshesk(x_{s,k})\|}{4\kappa_h^\delta L_h([x_{s,k},\bar{x}_{s,k+1}])}\cdot\eta_{s,k}\|\tcDshesk(x_{s,k})\|\\
    & = & \Psi(x_{s,k}) - \frac{\|\tcDshesk(x_{s,k})\|}{4\kappa_h^\delta L_h([x_{s,k},\bar{x}_{s,k+1}])}\cdot\frac{\mu\delta\|v_{s,k}+u_{s,k+1}\|}{\|v_{s,k}\|+\rho}\nonumber\\
    &\leq & \Psi(x_{s,k}) - \frac{\mu\delta\rho}{12\kappa_h^\delta L_h([x_{s,k},\bar{x}_{s,k+1}])}\,.\nonumber
\end{eqnarray}

\noindent\textbf{case 3.} If $\gamma_{s,k} = 1$ and $\eta_{s,k} = \min\big\{\frac{1}{2\kappa_h^\delta L}, \frac{\mu\delta}{3\rho}\big\}$. In this case, the third row of \eqref{lm:TBPG-svr-descent-final-3} indicates that  
\begin{eqnarray*}
    \Psi(x_{s,k+1}) &\leq& \Psi(x_{s,k}) - \frac{\|\tcDshesk(x_{s,k})\|}{4\kappa_h^\delta L_h([x_{s,k}),\bar{x}_{s,k+1}])}\cdot\eta_{s,k}\|\tcDshesk(x_{s,k})\|\\
    & \leq & \Psi(x_{s,k}) - \frac{\min\Big\{\frac{1}{2\kappa_h^\delta L}, \frac{\mu\delta}{3\rho}\Big\}}{4\kappa_h^\delta L_h([x_{s,k}),\bar{x}_{s,k+1}])}\cdot\epsilon\,.
\end{eqnarray*}
Combining cases 1,2, and 3, we know the least descent among the three cases are guaranteed to be achieved. Note that if $\epsilon\leq \max\left\{2L\kappa_h^\delta\mu\delta\rho/3\,,\rho^2\right\}$, direct computation shows that  
$\frac{\min\{1/2\kappa_h^\delta L\,,\, \frac{\mu\delta}{3\rho}\}\cdot\epsilon}{4(\kappa_h^\delta)^2 L_h([x_{s,k},\bar{x}_{s,k+1}])}$ lower bounds the descents in all three cases, which completes the proof.  
\end{proof}
Consequently, define $(S_\epsilon,K_\epsilon):=\argmin_{s,k}\big\{(s-1)\tau+k: \|\tcDshesk(x_{s,k})\|^2\leq \epsilon\big\}$ as the first time that we find a point $\|\tcDshesk(x_{s,k})\|^2\leq \epsilon$, and set 
$R_\epsilon : = \max_{s,k}\big\{\|x_{s,k}-x_{1,0}\|:(s-1)\tau+k\leq (S_\epsilon-1)\tau+K_\epsilon\big\}$ as the maximum traveling distance until finding such a point. Then the following theorem holds while the proof is omitted.
\begin{theorem}
    \label{theorem:TBPG-svr}
    Let us set the parameters according to Lemma  \ref{lemma:TBPG-error} and \ref{lemma:TBPG-svr-descent-final}. Consider the target accuracy $\epsilon\leq \max\left\{2L\kappa_h^\delta\mu\delta\rho/3,\rho^2\right\}$, let $\cX_\epsilon := \mathrm{Lev}_0\cap B(x_{1,0},R_\epsilon)$ be a compact set, then with probability at least $1-q$, all the following arguments hold:    $$R_\epsilon\leq\frac{4\kappa_h^\delta\Delta_\Psi}{\sqrt{\epsilon}}\qquad\mbox{and}\qquad S_\epsilon \leq \frac{\max\{8\kappa_h^\delta L,\frac{12\rho}{\mu\delta}\}\cdot (\kappa_h^\delta)^2 L_h(\cX_s)\cdot\Delta_\Psi}{\epsilon \tau}+1.$$
    Moreover, we have $\|\mathcal{D}_{\phi,h}^{\eta_{S_\epsilon,K_\epsilon}} (x_{S_\epsilon,K_\epsilon})\|^2\leq 2.5\epsilon$. The total samples consumed is $\sum_{s=1}^{S_\epsilon}n+\tau b_s = \widetilde{O}(\sqrt{n}/\epsilon)$, where $\widetilde{O}(\cdot)$ hides the poly-logarithmic terms in $q$ and $\epsilon$.
\end{theorem}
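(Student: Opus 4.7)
The plan is to mirror the deterministic argument of Theorem \ref{theorem:TBPG-deterministic} while controlling the stochastic error uniformly across all epochs via a Borel--Cantelli style union bound. First, for each $s\geq 1$ and each $k\in\{1,\dots,\tau-1\}$, Lemma \ref{lemma:TBPG-error} provides the concentration bound $\|\mathcal{E}_{s,k}\|\leq \sqrt{\epsilon}/(2\kappa_h^\delta)$ with failure probability at most $p_s = 6q/(\pi^2 s^2\tau)$. Define the global good event
$$\mathcal{A} \;:=\; \bigcap_{s\geq 1}\bigcap_{k=1}^{\tau-1}\Big\{\|\mathcal{E}_{s,k}\|\leq \tfrac{\sqrt{\epsilon}}{2\kappa_h^\delta}\Big\}.$$
A union bound gives $\mathrm{Prob}(\mathcal{A}^c)\leq \sum_{s\geq 1}\tau p_s = \sum_{s\geq 1}\tfrac{6q}{\pi^2 s^2} = q$, so $\mathrm{Prob}(\mathcal{A})\geq 1-q$. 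For the remainder of the proof we condition on $\mathcal{A}$, so that the two descent inequalities of Lemma \ref{lemma:TBPG-svr-descent-final} hold at every iterate $(s,k)$ for which $\|\tcDshesk(x_{s,k})\|^2\geq \epsilon$.

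Next I will deduce the traveling-distance bound $R_\epsilon$. Summing the first inequality of Lemma \ref{lemma:TBPG-svr-descent-final} over all iterates $(s,k)$ up to $(S_\epsilon, K_\epsilon)$ yields
$$\frac{\sqrt{\epsilon}}{4\kappa_h^\delta}\sum \|x_{s,k+1}-x_{s,k}\| \;\leq\; \Psi(x_{1,0}) - \inf_x\Psi(x) \;=\; \Delta_\Psi\,,$$
so by the triangle inequality every visited $x_{s,k}$ lies in $B(x_{1,0}, 4\kappa_h^\delta\Delta_\Psi/\sqrt{\epsilon})$. Since the same descent inequality also forces $\Psi(x_{s,k})\leq \Psi(x_{1,0})$, we conclude $\{x_{s,k}\}\subseteq \cX_\epsilon$. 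By Lemma \ref{lemma:TBPG-svr-descent} we also have $\|\bar{x}_{s,k+1}-x_{s,k}\|\leq \delta$, so enlarging $\cX_\epsilon$ by $\delta$ if necessary gives $L_h([x_{s,k},\bar{x}_{s,k+1}])\leq L_h(\cX_\epsilon)$ throughout.

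Now I use the second inequality of Lemma \ref{lemma:TBPG-svr-descent-final}: summing over all $(s,k)$ with $(s-1)\tau+k\leq (S_\epsilon-1)\tau+K_\epsilon$ and using the uniform upper bound $L_h([x_{s,k},\bar{x}_{s,k+1}])\leq L_h(\cX_\epsilon)$ gives
$$\Delta_\Psi \;\geq\; \frac{\min\{1/(2\kappa_h^\delta L),\,\mu\delta/(3\rho)\}}{4(\kappa_h^\delta)^2 L_h(\cX_\epsilon)}\cdot \big((S_\epsilon-1)\tau+K_\epsilon\big)\cdot \epsilon\,,$$
which, after rearranging and using $K_\epsilon\leq \tau$, produces the claimed bound on $S_\epsilon$. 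For the stationarity claim at the stopping iterate, relate $\cDshesk$ to $\tcDshesk$ via the optimality conditions of the two Bregman subproblems: the difference equals $\eta^{-1}(\nabla h(\bar{x})-\nabla h(\bar{x}^*))$, where $\bar{x}^*$ is the exact prox step using $\nabla f(x_{s,k})$. Strong convexity of the Bregman subproblem gives $\|\bar{x}-\bar{x}^*\|\leq \|\mathcal{E}_{s,k}\|/\mu_h$, and combining with the $L_h$ upper bound on $\|\nabla h(\bar{x})-\nabla h(\bar{x}^*)\|$ and the choice $\eta^{-1}\leq \max\{2\kappa_h^\delta L, (\sqrt{\epsilon}+2\rho)/(\mu\delta)\}$ yields $\|\cDshesk(x_{S_\epsilon,K_\epsilon})-\tcDshesk(x_{S_\epsilon,K_\epsilon})\|\leq c\sqrt{\epsilon}$ for a constant $c$, and the triangle inequality gives $\|\cDshesk\|^2\leq 2.5\epsilon$ after plugging the numerical constants.

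The final sample-complexity tally is arithmetic: each epoch uses $n$ samples for $v_{s,0}$ plus $\tau b_s = O(\sqrt{n}\log(s/q))$ per inner step, totalling $O(n\log(s/q))$ per epoch; summing over $s\leq S_\epsilon = O(L_h(\cX_\epsilon)/(\epsilon\sqrt{n}))$ yields $\widetilde{O}(\sqrt{n}L_h(\cX_\epsilon)/\epsilon)$, matching the claim up to the hidden poly-log factors in $q^{-1}$ and $\epsilon^{-1}$. The main obstacle is the first step: since $R_\epsilon$, $S_\epsilon$, and the $L_h$ value that controls the descent all depend on the (random) trajectory, a naive in-expectation argument breaks; the $p_s \asymp s^{-2}$ schedule is precisely what allows a summable union bound over an \emph{a priori} unbounded horizon, so that conditioning on $\mathcal{A}$ is legitimate before we know how many epochs will actually be executed.
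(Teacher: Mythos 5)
Your proof takes essentially the same approach as the paper, and in fact is \emph{more} complete: the paper's own proof dismisses the bounds on $R_\epsilon$ and $S_\epsilon$ as a ``straightforward consequence of Lemma~\ref{lemma:TBPG-svr-descent-final}'' and only writes out the argument relating $\cDshesk$ to $\tcDshesk$, while you fill in the infinite-horizon union bound over the $p_s\propto s^{-2}$ schedule (which is exactly why that schedule was chosen and why conditioning on the global good event is legitimate before $S_\epsilon$ is known), the $R_\epsilon$ bound from the first descent inequality, the $S_\epsilon$ bound from the second, and the sample tally. Your observation that $\bar{x}_{s,k+1}$ need not lie in $\mathrm{Lev}_0$ so one should enlarge $\cX_\epsilon$ by $\delta$ is also a real technicality the paper glosses over; and you correctly read the theorem's $L_h(\cX_s)$ as the typo it almost certainly is for $L_h(\cX_\epsilon)$.

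One arithmetic slip in the last step: the prox-stability bound is $\|\bar{x}_{s,k+1}-\hat{x}_{s,k+1}\|\le \eta_{s,k}\,\|\mathcal{E}_{s,k}\|/\mu_h$ — you dropped the $\eta_{s,k}$. That factor is what makes the bound on $\|\cDshesk-\tcDshesk\| = \eta_{s,k}^{-1}\|\nabla h(\hat{x}_{s,k+1})-\nabla h(\bar{x}_{s,k+1})\|$ independent of the adaptive step size: the two $\eta_{s,k}$'s cancel, giving $\|\cDshesk-\tcDshesk\|\le \kappa_h^\delta\,\|\mathcal{E}_{s,k}\|\le\sqrt{\epsilon}/2$ cleanly, and thence $\|\cDshesk\|^2\le 2(\epsilon+\epsilon/4)=2.5\epsilon$. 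Your route — first losing the $\eta_{s,k}$, then invoking an explicit bound on $\eta_{s,k}^{-1}$ at the stopping iterate — does not yield the sharp constant $1/2$ needed for the $2.5\epsilon$ claim; once you restore the $\eta_{s,k}$ factor, the step-size bound you quote becomes unnecessary and the constant comes out right.
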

\begin{proof}
    The bounds on $R_\epsilon$ and $S_\epsilon$ are straightforward consequence of Lemma \ref{lemma:TBPG-svr-descent-final}. We only need to show the bound of exact Bregman proximal gradient mapping. For notational simplicity, let us denote $(s,k)=S_\epsilon,K_\epsilon$. Then by definition, we have $\|\tcDshesk(x_{s,k})\|^2\leq \epsilon$. Let $\hat{x}_{s,k+1}:=\Tshesk(x_{s,k},\nabla f(x_{s,k}))$ be the ideal intermediate update point that uses the exact gradient $\nabla f(x_{s,k})$, hence the exact gradient mapping will be $\cDshesk(x_{s,k}) = \frac{\nabla h(x_{s,k})-\nabla h(\hat{x}_{s,k+1})}{\eta_{s,k}}$. By the proof of Lemma \ref{lemma:restricted-grd-mapping} and \eqref{lm:TBPG-svr-descent-final-1}, we have 
    $$\|\hat{x}_{s,k+1}-\bar{x}_{s,k+1}\|\leq\frac{\eta_{s,k}\|\mathcal{E}_{s,k}\|}{\mu_h(B(x_{s,k},\delta))}\leq \frac{\eta_{s,k}\sqrt{\epsilon}}{2\kappa_h^\delta\cdot\mu_h(B(x_{s,k},\delta))}.$$ Consequently, with $\tcDshesk(x_{s,k}) = \frac{\nabla h(x_{s,k})-\nabla h(\bar{x}_{s,k+1})}{\eta_{s,k}}$,  we further obtain that 
    \begin{eqnarray}
        \big\|\cDshesk(x_{s,k})-\tcDshesk(x_{s,k})\big\|   =   \Big\|\frac{\nabla h(x_{s,k})-\nabla h(\bar{x}_{s,k+1})}{\eta_{s,k}} -\frac{\nabla h(x_{s,k})-\nabla h(\hat{x}_{s,k+1})}{\eta_{s,k}} \Big\|\leq \sqrt{\epsilon}/2.\nonumber
    \end{eqnarray} 
    Using the fact that $\|a+b\|^2\leq 2(\|a\|^2+\|b\|^2)$, we finish the proof of $\|\cDshesk(x_{s,k})\|^2\leq2.5\epsilon$.
\end{proof}

\section{Numerical experiments}
In this section, we present some preliminary experiment on the (sparse) quadratic inverse problem studied in \cite{bolte2018first}. In particular, we consider the phase retrieval setting where we try to recover an unknown signal vector $x_\mathrm{true}$ from a bunch of quadratic measurements. Given a group of sampling vector $a_i\in\mathbb{R}^d$, we can take a noisy quadratic measurement and observe a scalar $b_i^2$ such that $|\langle a_i,x_{\mathrm{ture}} \rangle|^2 \approx b_i^2$. Suppose the noise is additive and Gaussian, then we can consider the following formulation:
\begin{equation}
    \label{prob:quad-inv}
    \min_{x\in\mathbb{R}^d}\,\, \Psi(x):=\frac{1}{N}\sum_{i=1}^N\left(|\langle a_i,x\rangle|^2-b_i^2\right)^2 + \sigma\|x\|_1.
\end{equation}
By \cite[Lemma 5.1]{bolte2018first}, the differentiable component of $\Psi(x)$ is $L$-smooth adaptable to the quartic polynomial kernel $h(x) = \frac{1}{2}\|x\|^2 + \frac{1}{4}\|x\|^4$, with the constant $L = \frac{1}{N}\sum_{i=1}^N(3\|a_i\|^4+b_i^2\|a_i\|^2)$.  \vspace{0.2cm}

\textbf{Dataset preparation}.\,\, In the experiments, we considered two datasets. 
The first is a set of popular signal processing test images, including Lena, Barbara, Peppers, and Baboon. We unify these images to $64\times64$ pixels and reshape them to vectors in $\RR^{d}$ with $d = 4096$. 
We normalize elements of $x_{\mathrm{true}}$ to $[0,1]$ by setting $x_{\mathrm{true}}\leftarrow \frac{x_{\mathrm{true}}}{\|x_{\mathrm{true}}\|_\infty}$.
Each sampling vector $a_i\in\mathbb{R}^{4096}$ are taken as Gaussian random vectors with each element generated from $\mathcal{N}(0,1)$, and an additive Gaussian noise from $\mathcal{N}(0,0.05)$ is added to each measurement. To achieve robust recovery of the signal, we set the total number of sampling vectors to be $N = 4 d$. In particular, because these test images are not sparse, we will set $\sigma = 0$ and measure convergence by $\|\nabla\Psi(\cdot)\|^2$. In order to test the nonsmooth case, we pick a few $28\times28$  images from the MNIST dataset, and pad their margin with zeros to make them of size $36\times36$. Then similar to the first dataset, we vectorize the images and take measure with Gaussian random vector from $\mathcal{N}(0,1)$. For these $x_{\mathrm{true}}\in\RR^{d}$ with $d = 1296$, the number of nonzero pixels $k$ are typically between 100 and 200. Therefore, for robust recovery, the total number of sampling is set to $N = \lceil4 k\ln d\rceil$ with $k = 200$. For each measurement, we still put an additive noise sampled from $\mathcal{N}(0,0.05)$. We set the  regularization coefficient $\sigma = 0.001$ and measure the convergence by $\mathrm{dist}^2(0,\partial\Psi(\cdot))$ and function value gap versus the total number of samples consumed. \vspace{0.2cm}

\textbf{Algorithmic setup.}\,\, In the experiments, we will test Algorithm \ref{algorithm:proxSARAH} abbreviated as SVRBPG-EB, where ``SVR'' stands for ``stochastic variance reduced'' and ``EB'' stands for ``epoch bounds''. For its adaptive step size variant described by \eqref{defn:TBPG-svr-1} and \eqref{defn:TBPG-svr-2}, we denote it as SVRBPG-AS where ``AS'' stands for ``adaptive step''. For both algorithms, we set the batch size to be $b = 100$ and the epoch length to be  $\tau = \lceil2N/b\rceil$. For SVRBPG-EB, the parameter $\gamma$ and $\eta$ are chosen according to Theorem \ref{theorem:proxSARAH-finite}. The parameter $\kappa_h^\delta$ and $\delta$ are chosen according to Proposition \ref{proposition: Poly-kernel-condition}. In particular, this proposition allows us to adaptively set $\cX_s = B(x_{s,0},\max\left\{ {1}/{4},{\|x_{s,0}\|}/{5}\right\})$, see Line 4 of Algorithm \ref{algorithm:proxSARAH}. For SVRBPG-AS, the parameter $\eta_{s,k}$ and $\gamma_{s,k}$ are chosen according to \eqref{defn:TBPG-svr-1} and \eqref{defn:TBPG-svr-2}. In particular, we utilize the special property of polynomial kernel in Proposition \ref{proposition: Poly-kernel-condition} and set $\delta = \max\left\{ {1}/{4},{\|x_{s,0}\|}/{5}\right\}$ and $\mu = \mu_h(B(x_{s,0},\delta))$ for each epoch $s$ as the adaptive step sizes provably restrict the iterates within this region. Finally, for the constant $L$, we notice that the $L$ estimate provided by \cite[Lemma 5.1]{bolte2018first} is way too conservative. For example, for the Lena data, the estimate of \cite[Lemma 5.1]{bolte2018first} gives $L \approx 3.9\times 10^8$. Therefore, for both SVRBPG-EB and SVRBPG-AS, the constant $L$ remains a tuning parameter, and from $L=\{10^0,10^1,\cdots,10^8\}$ we find $L=10$ works best.  For comparison, we will test the stochastic BPG (SBPG) method studied in \cite{davis2018stochastic,zhang2018convergence,ding2023nonconvex}, and the momentum stochastic BPG with  (MSBPG) studied in \cite{ding2023nonconvex}. For both SBPG and MSBPG, we still choose the batch size to be $b = 100$. For the step size, we slightly change the suggestion of \cite{ding2023nonconvex} from $\eta_t = \max\left\{10^{-4},\frac{a}{\sqrt{t+1}}\right\}$ to $\eta_t = \max\left\{10^{-4},\frac{1}{a+b\sqrt{t}}\right\}$ as the original step size rule does not work very well in our setting even after parameter tuning. For SBPG and MSBPG, we tune the step size by choosing $a,b\in\{10^0,10^1,\cdots,10^4\}$ and we find $\eta_t = \max\left\{10^{-4},\frac{1}{10^3+10\sqrt{t}}\right\}$ works best for the first dataset, and $\eta_t = \max\left\{10^{-4},\frac{1}{10^2+10\sqrt{t}}\right\}$ works best for the second dataset. For MSBPG, the momentum parameter is tuned from $\{0.05,0.1\}$, and we find $\beta = 0.05$ works best for MSBPG. Finally, to illustrate the general advantage of Bregman-type methods to automatically adjust to a problem's local geometry, we also add  SARAH \cite{pham2020proxsarah} and STORM \cite{cutkosky2019momentum}, two variance reduced non-Bregman first-order methods, to our benchmarks. Both of them achieve the state-of-the-art complexity under the classic L-smooth setting. For SARAH, the batch sizes remain the same $b=100$, while its stepsize is set to $1/L$ with $L$ being a tuning parameter. STORM is a momentum-type variance reduced gradient method with an Adam-style adaptive stepsize. It has two related parameters $L$ and $G$. As we are considering a quartic polynomial problem, we set $G=L^{1.5}$ while let $L$ to be tuned. For both SARAH and STORM, parameter $L$ is tuned from $\{10^0,10^1,\cdots,10^8\}$. However, we do not find a uniformly best $L$ for all instances, and thus different tuned parameters are used for each task, respectively.\vspace{0.2cm}

\textbf{Experimental results.}\,\, Following the above parameter selection, we present the preliminary numerical results for the test instances in Figure \ref{fig:quad-inv} and Figure \ref{fig:quad-inv-sparse}. 
\begin{figure}[H]
    \centering
    \includegraphics[width=0.24\linewidth]{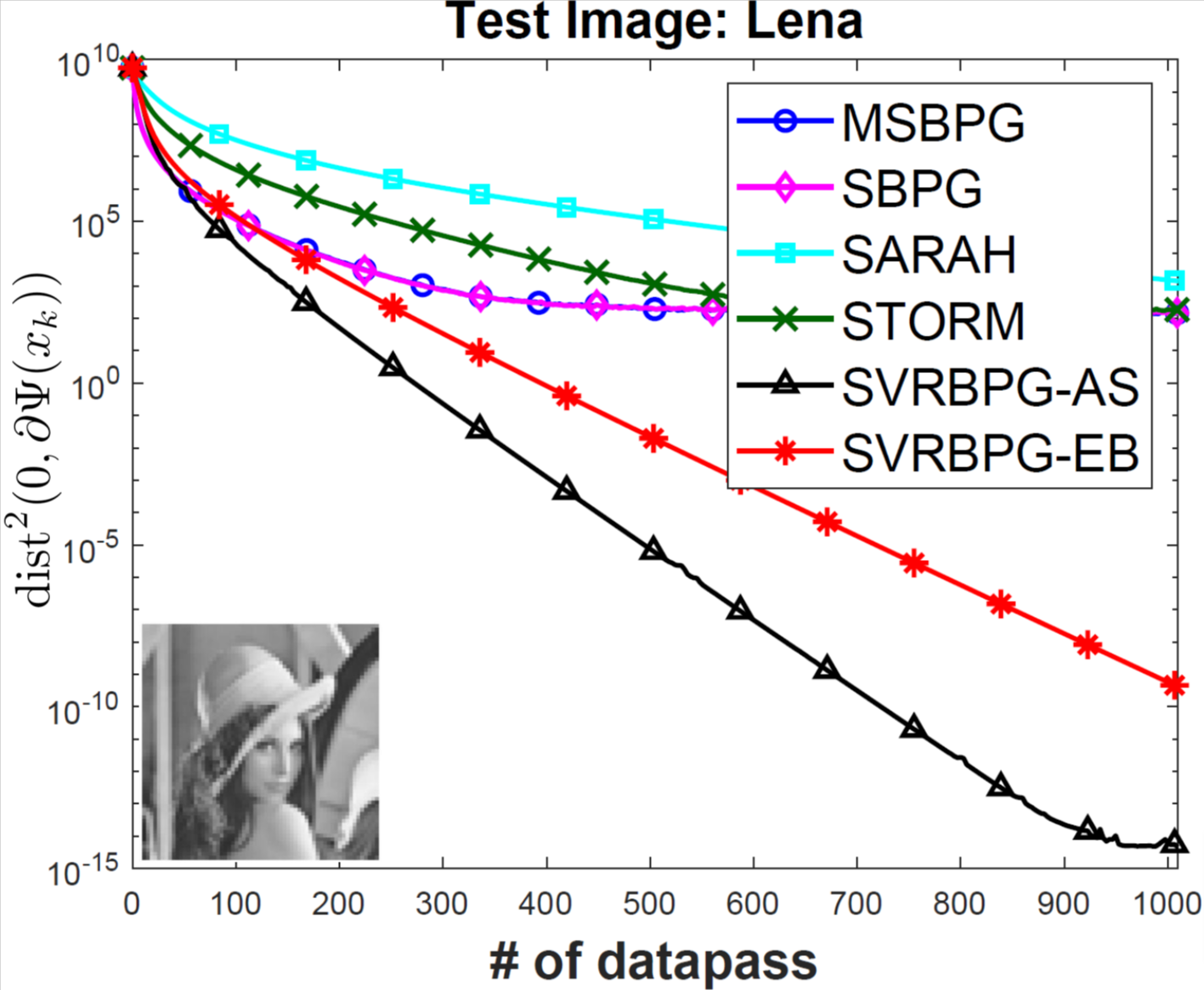}
    \includegraphics[width=0.24\linewidth]{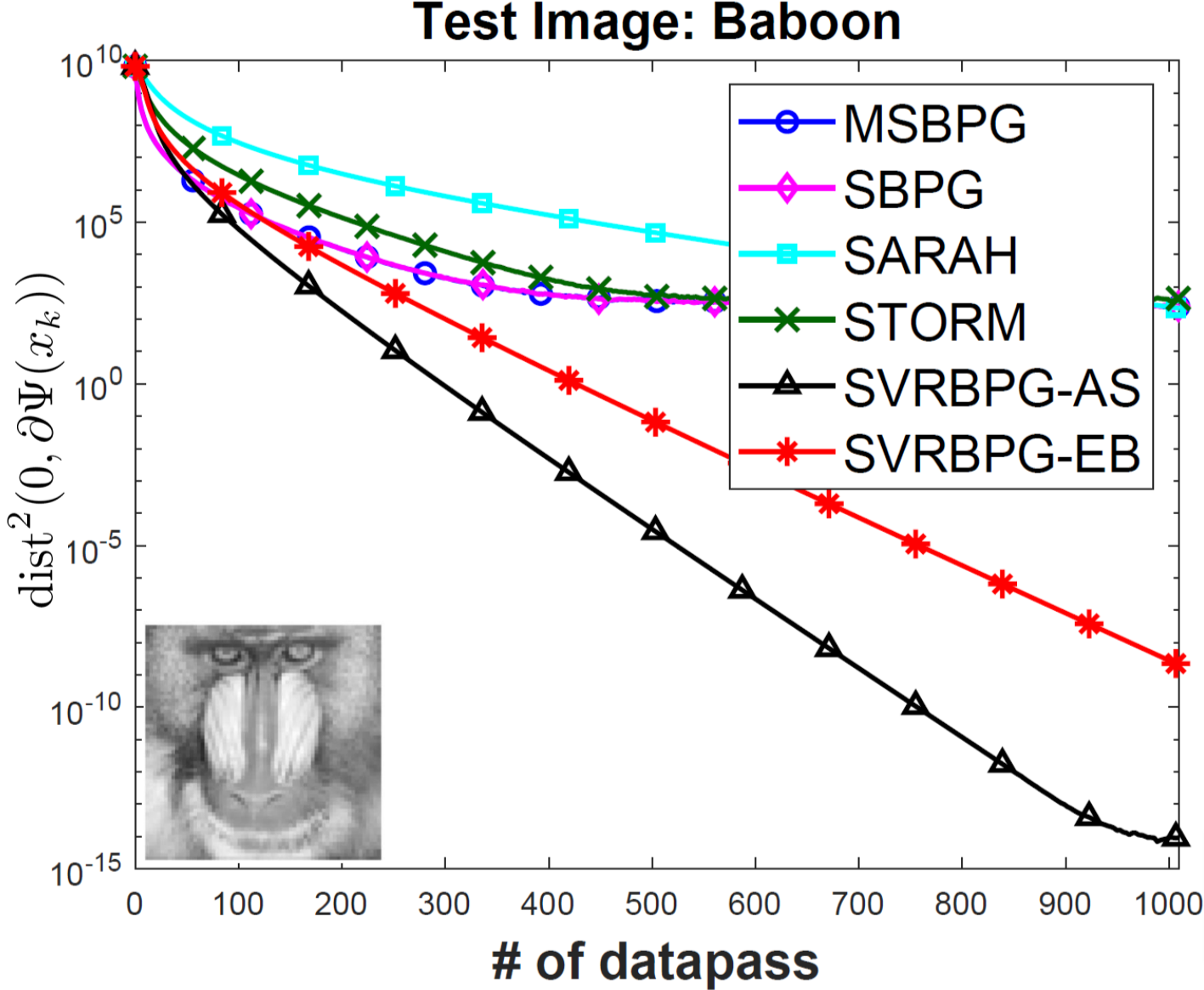}
    \includegraphics[width=0.24\linewidth]{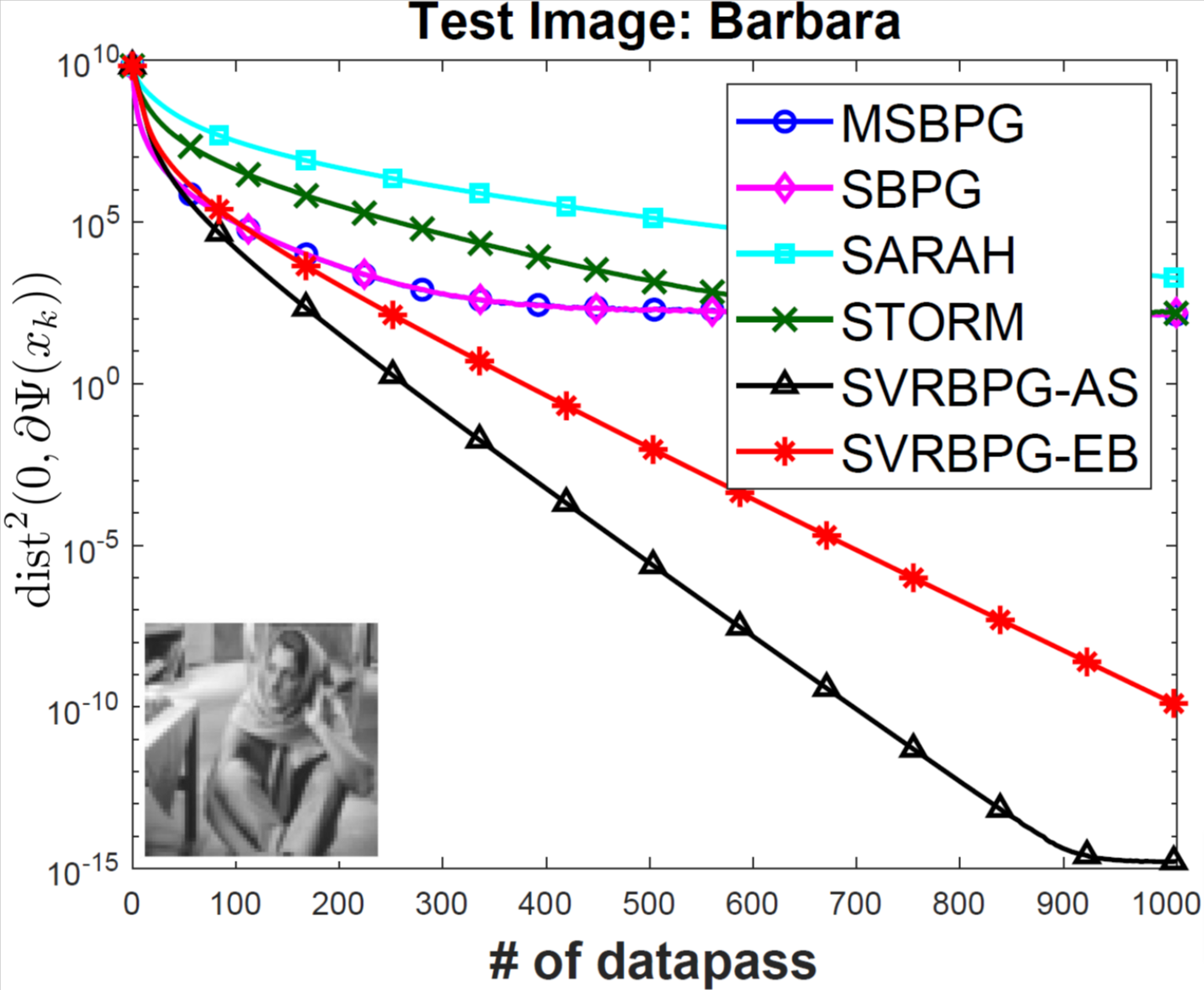}
    \includegraphics[width=0.24\linewidth]{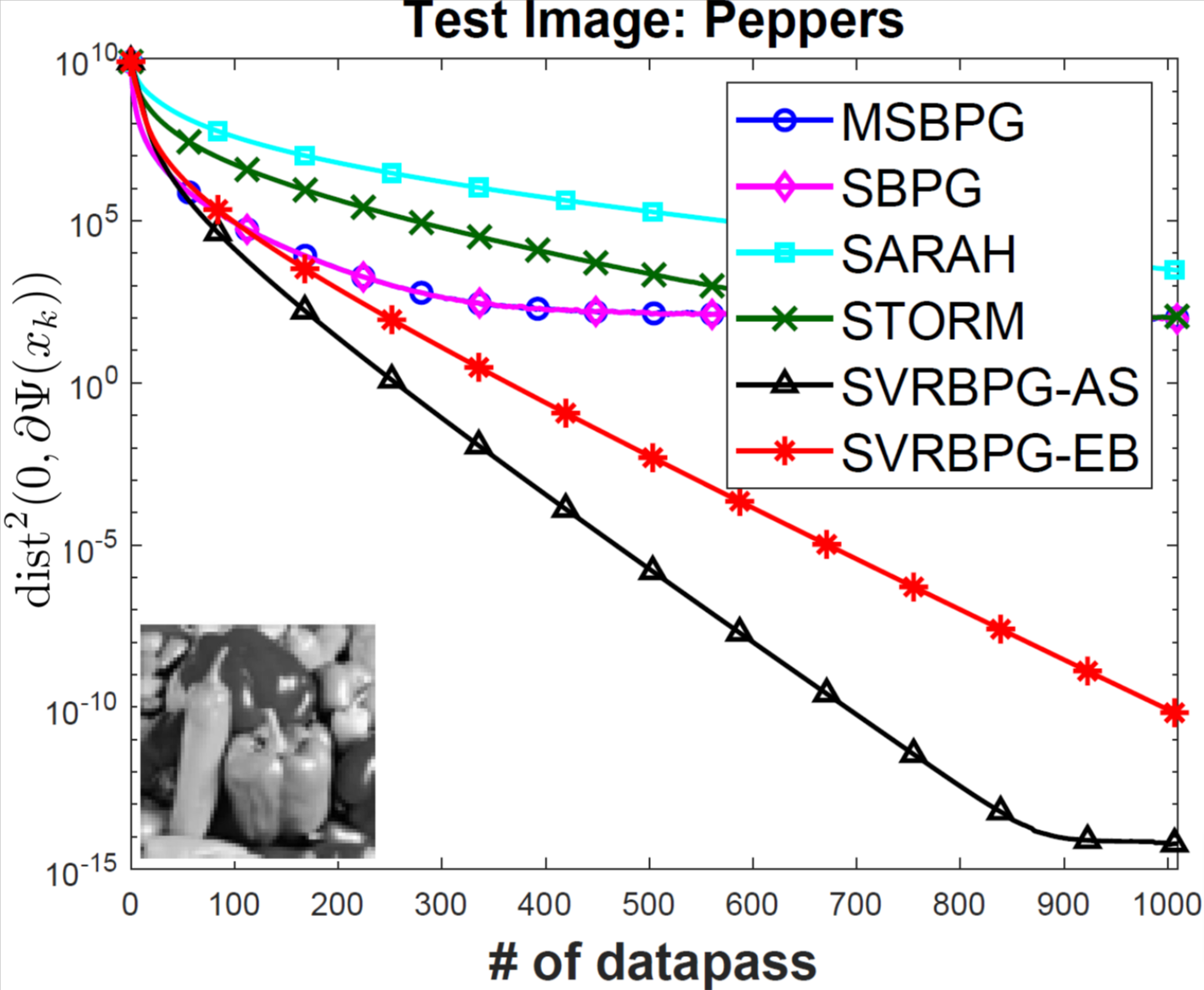}
    \vspace{0.2cm}\\
    \includegraphics[width=0.24\linewidth]{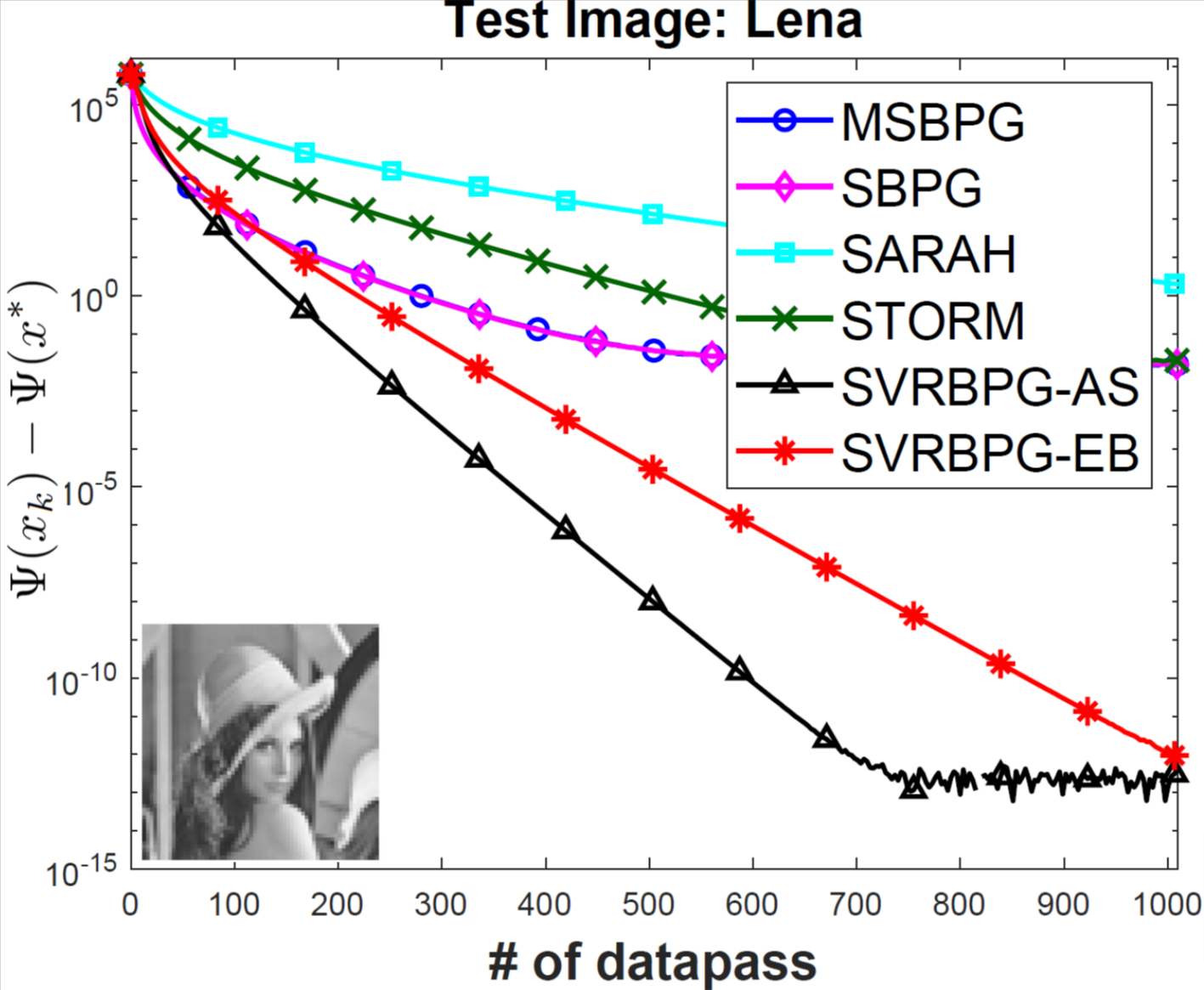}
    \includegraphics[width=0.24\linewidth]{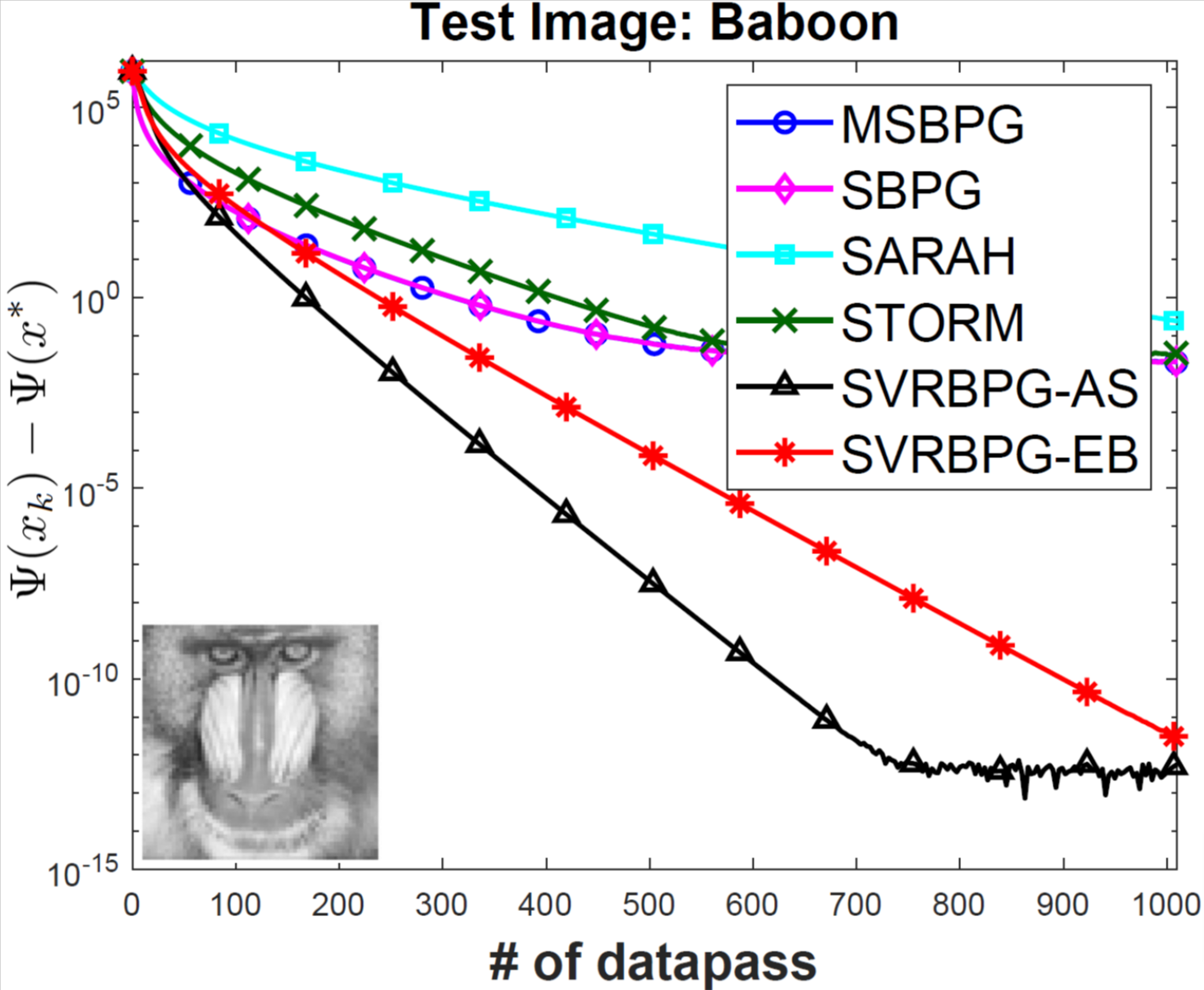}
    \includegraphics[width=0.24\linewidth]{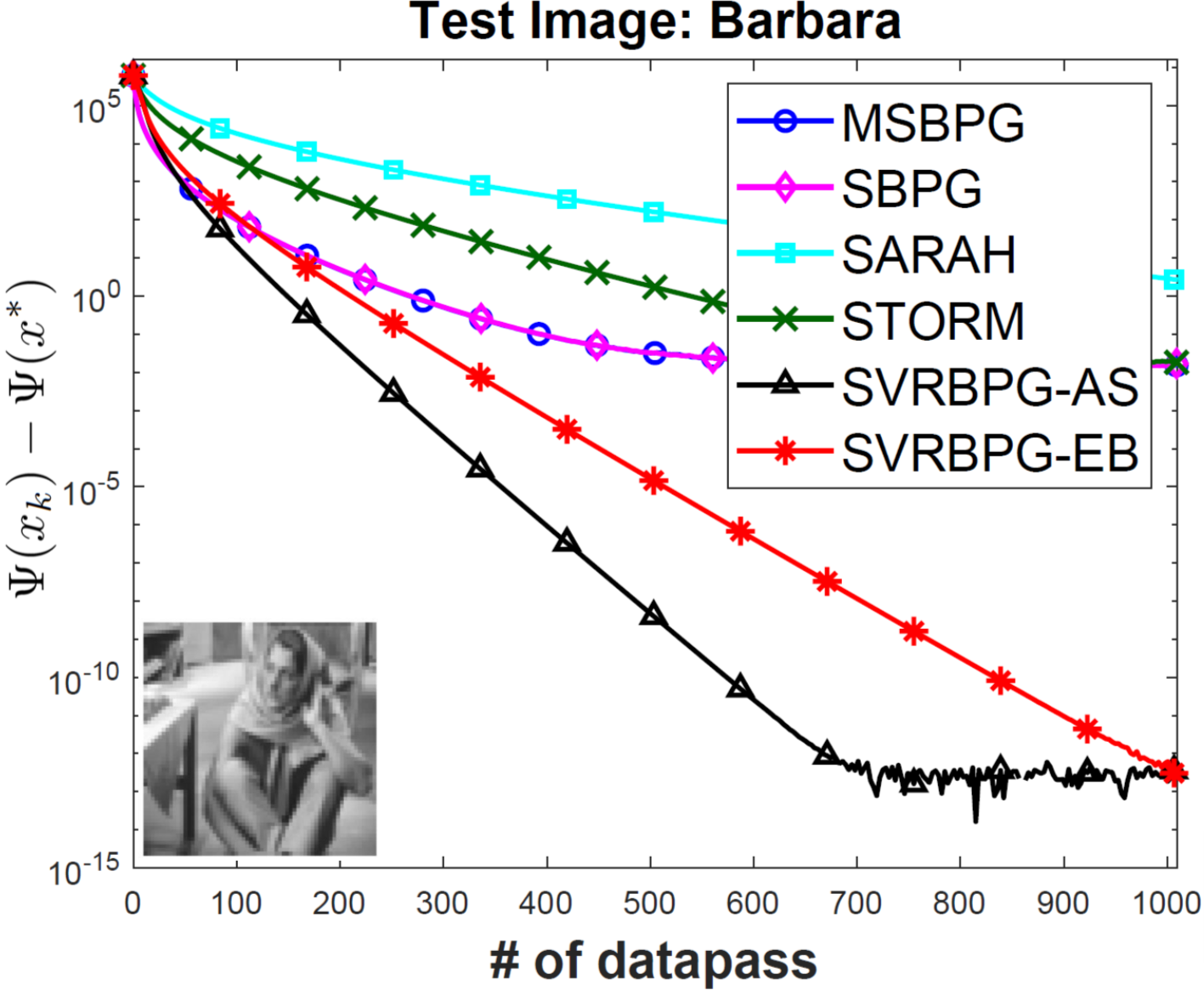}
    \includegraphics[width=0.24\linewidth]{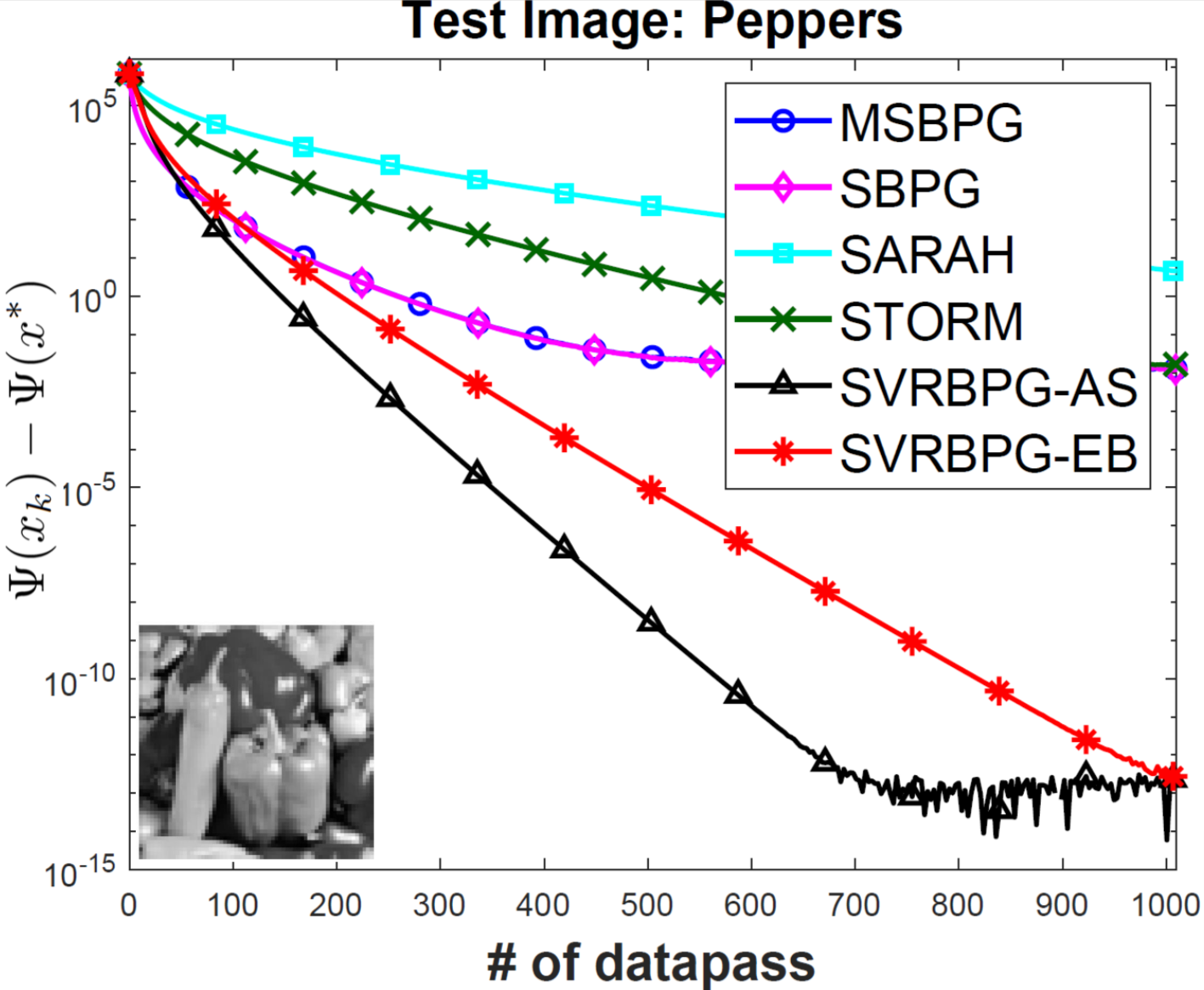}
    \caption{Experimental results for quadratic reverse problem. The raw signal vector $x_{\mathrm{true}}$ is attached at the bottom left corner of each subfigure. The ``\# of datapass'' in x-axis refers to $\frac{\#\mbox{samples  consumed}}{\mbox{full batch size } N}$.  }
    \label{fig:quad-inv}
\end{figure}

In Figure \ref{fig:quad-inv}, we present the differentiable case of problem \eqref{prob:quad-inv}. In this setting, the MSBPG and SBPG behaves very similarly and both of them are slower than the proposed two variants of stochastic variance reduced BPG.  In particular, for the subproblem of SVRBPG-EB, we adopt the heuristic that first ignores the constraint $x\in\cX_s$, if the resulting solution exits $\cX_s$, we project it to $\cX_s$ and use it as a warm start and run projected gradient method (PGM) for 25 iterations. By our record,  only 1.19\%, 1.46\%, 1.19\%, and 1.19\% iterations requires running an extra PGM for Lena, Peppers, Barbara, and Baboon, respectively. Moreover, all such cases happens in the first 3 epochs, which is very intuitive and as only early stages of the algorithm allows aggressive steps. Moreover, the early stop epoch (Line 9 of Algorithm \ref{algorithm:proxSARAH}) does not happen for all 4 cases. For the adaptive step size variant SVRBPG-AS, it shares a comparable performance of SVRBPG-EB while having easier subproblems, which is a desirable feature. Compared to the above Bregman-based first-order algorithms, the non-Bregman variance-reduced methods SARAH and STORM are not behaving very well. Possibly due to their inability to adapt to the varying local landscape of the tested instances, they behave slower than both MSBPG and the vanilla SBPG methods. 

\begin{figure}[H]
    \centering 
    \includegraphics[width=0.24\linewidth]{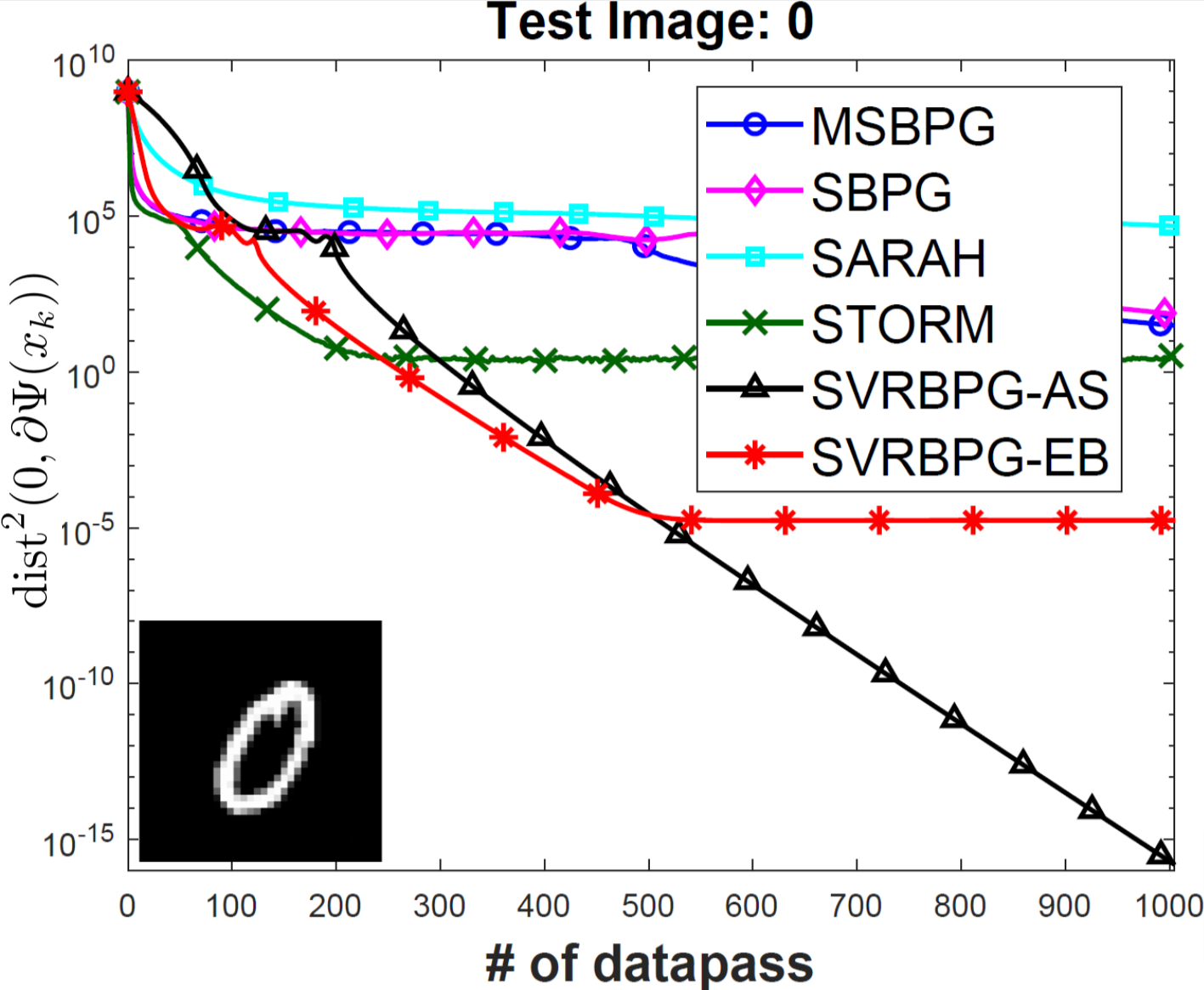}
    \includegraphics[width=0.24\linewidth]{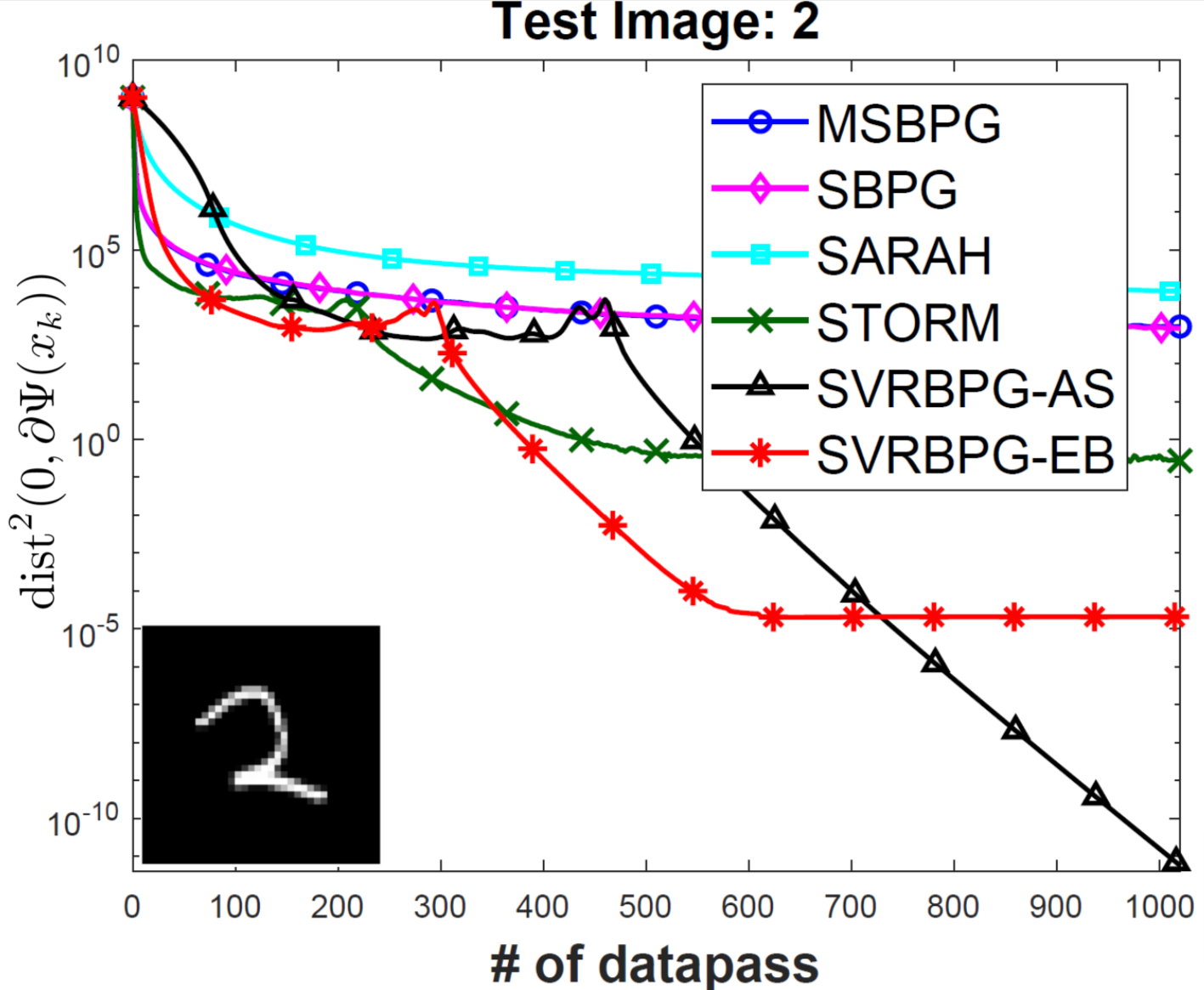}
    \includegraphics[width=0.24\linewidth]{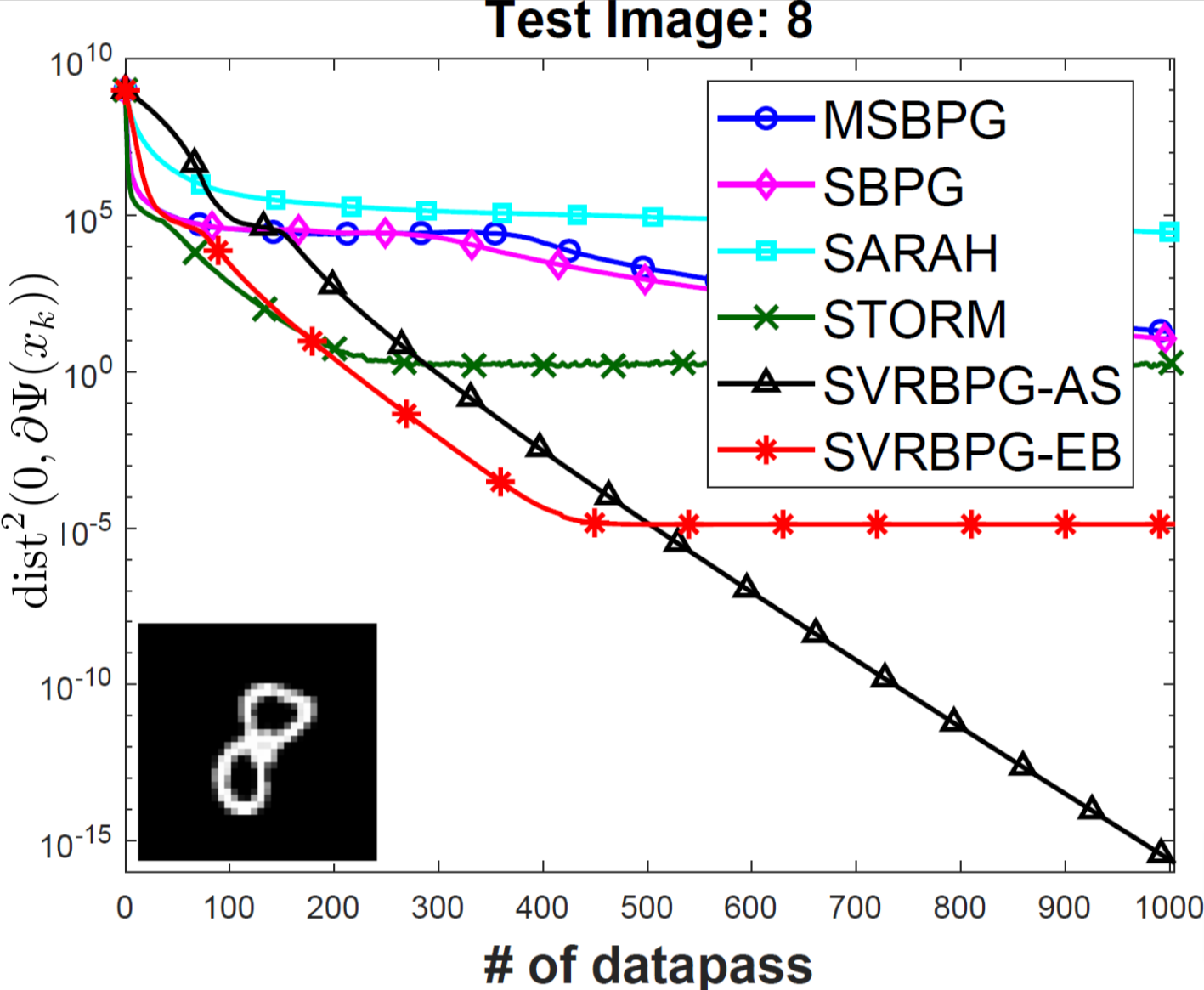}
    \includegraphics[width=0.24\linewidth]{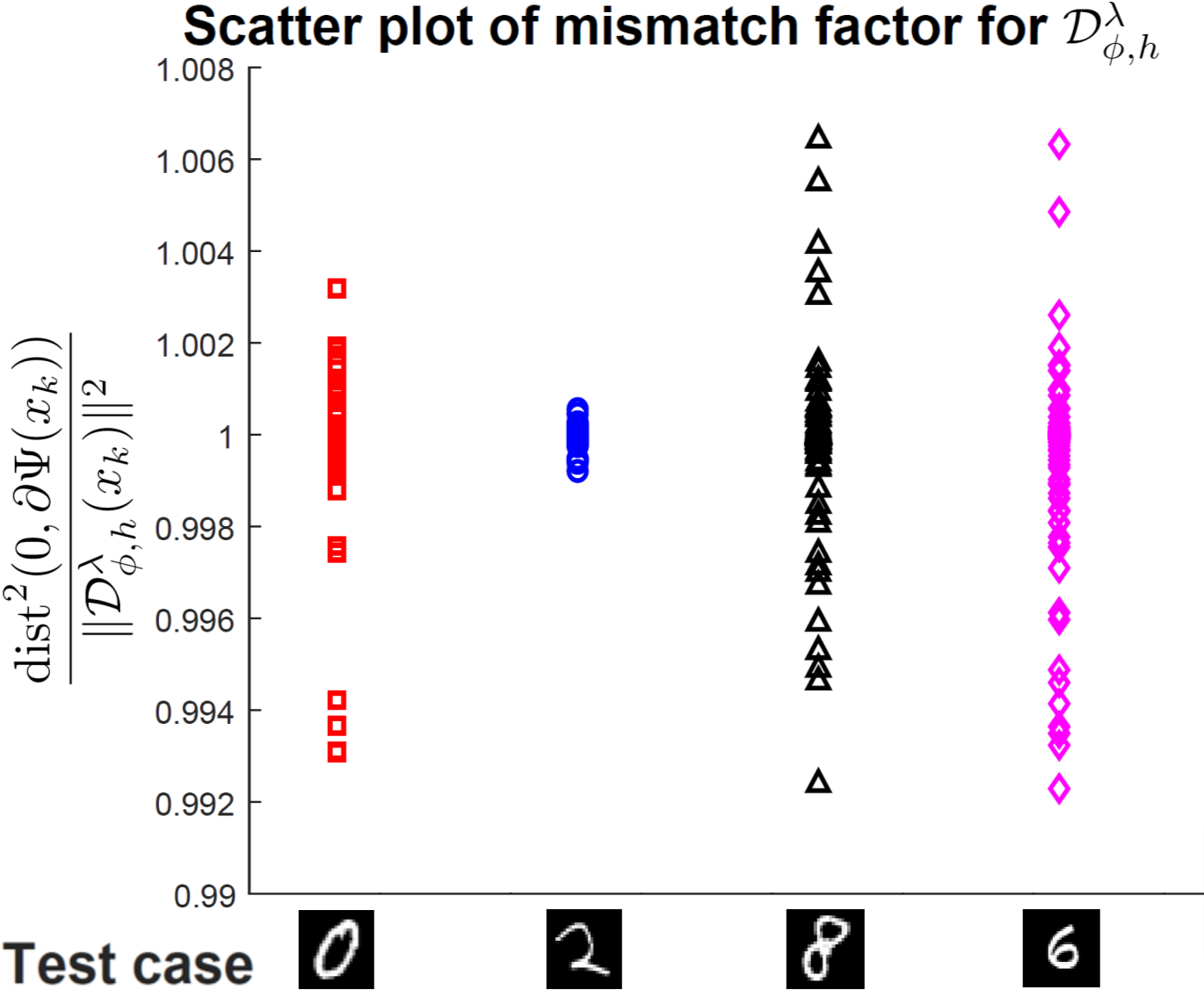}
    \vspace{0.2cm}\\
    \includegraphics[width=0.24\linewidth]{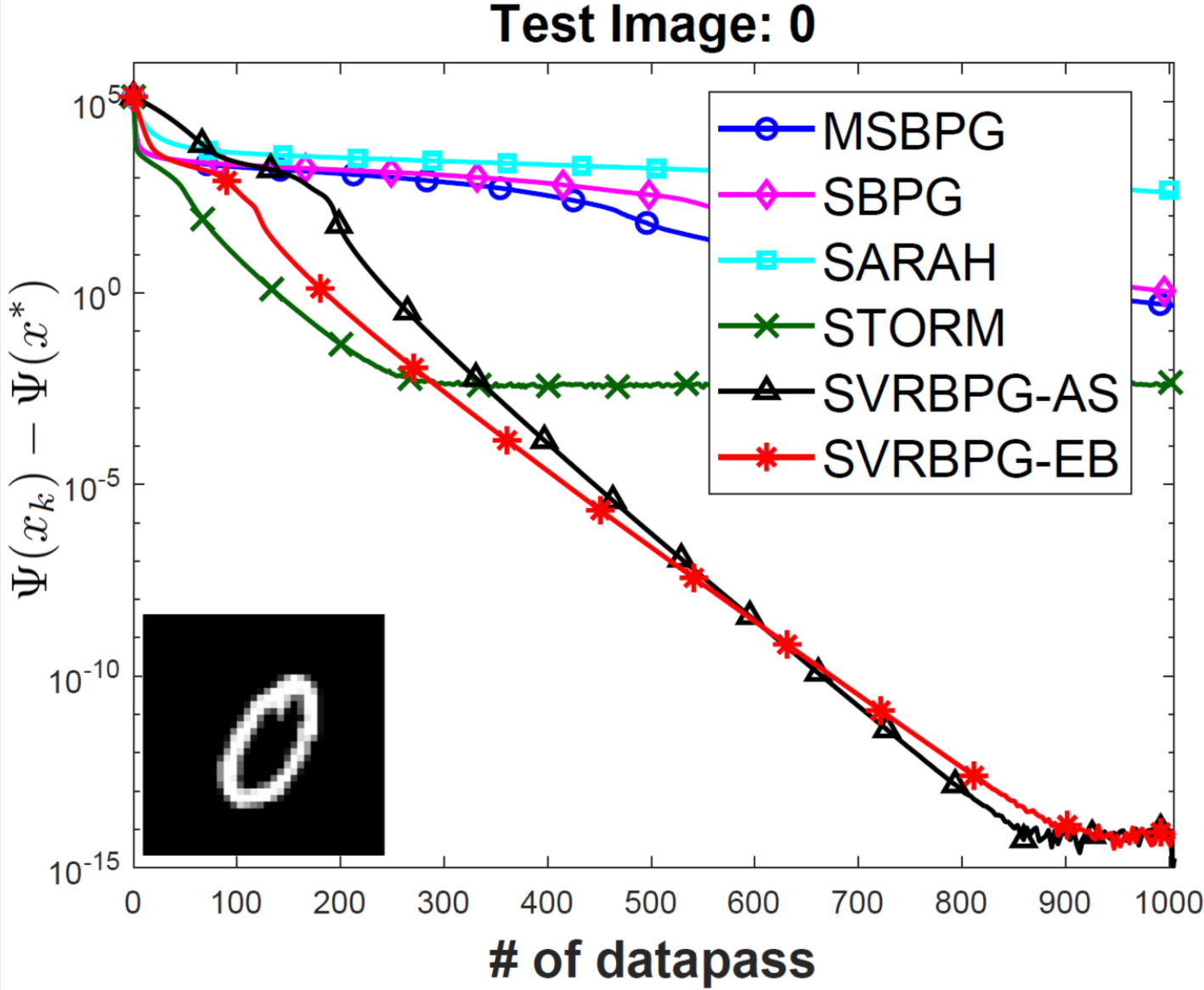}
    \includegraphics[width=0.24\linewidth]{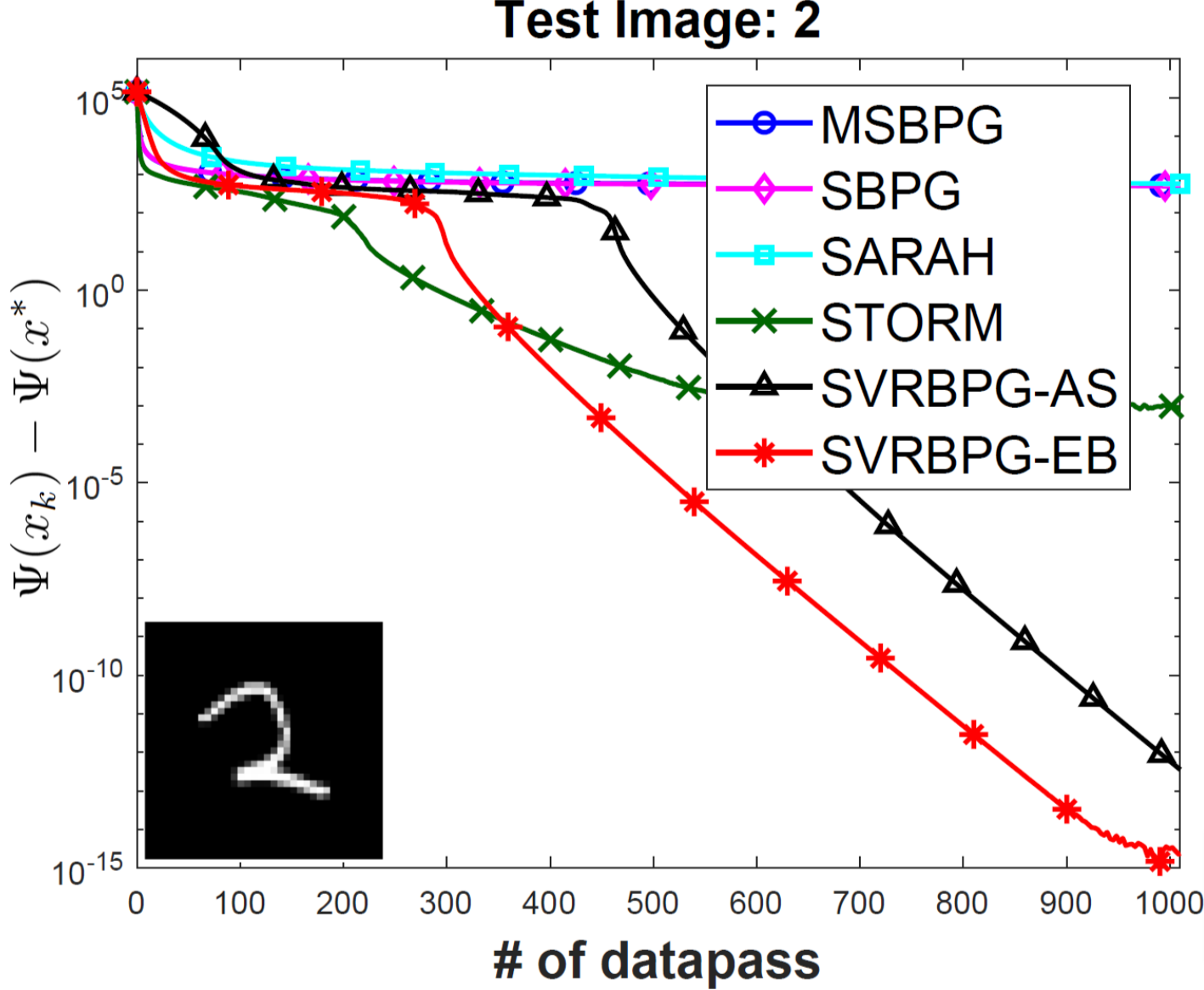}
    \includegraphics[width=0.24\linewidth]{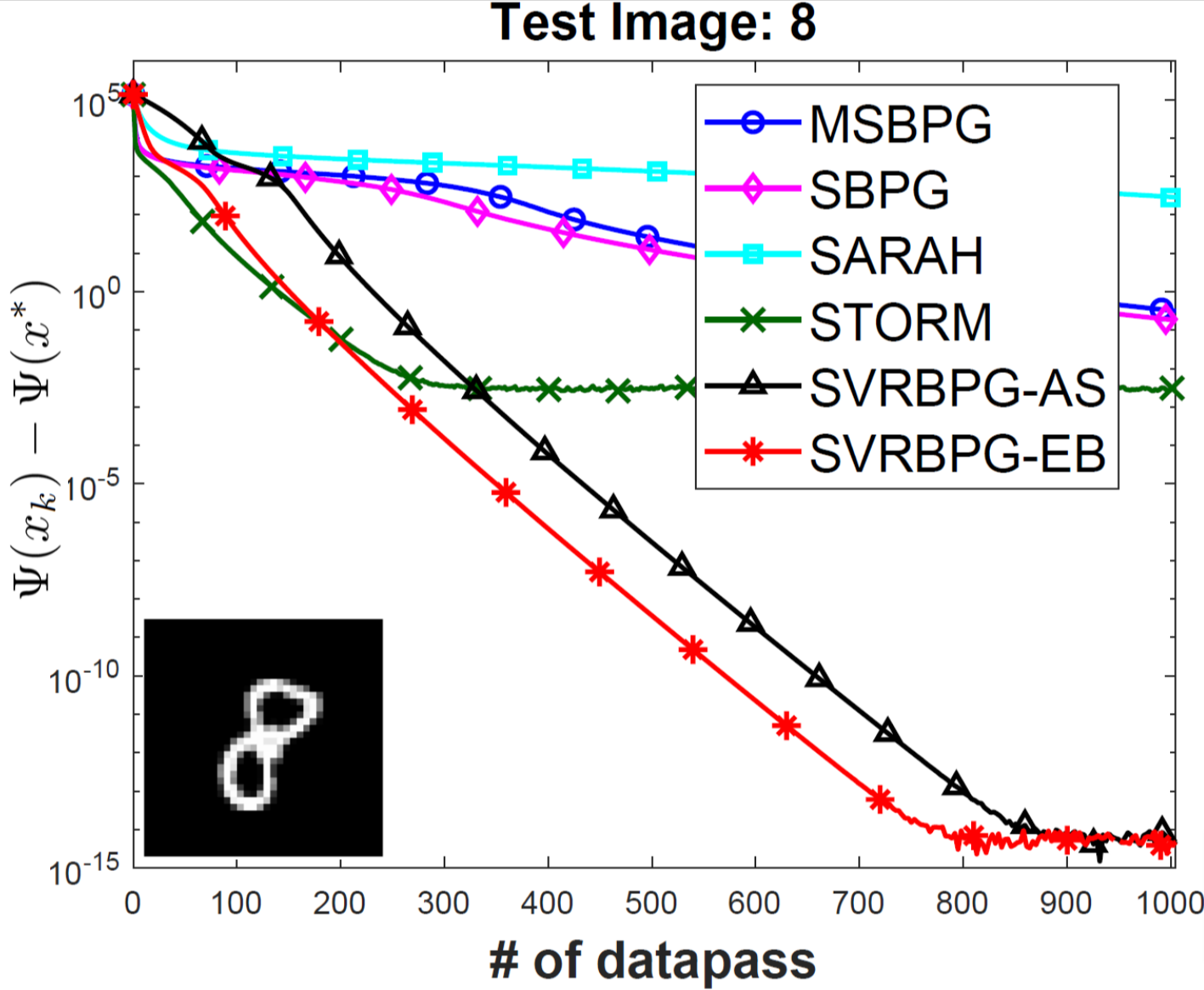}
    \includegraphics[width=0.24\linewidth]{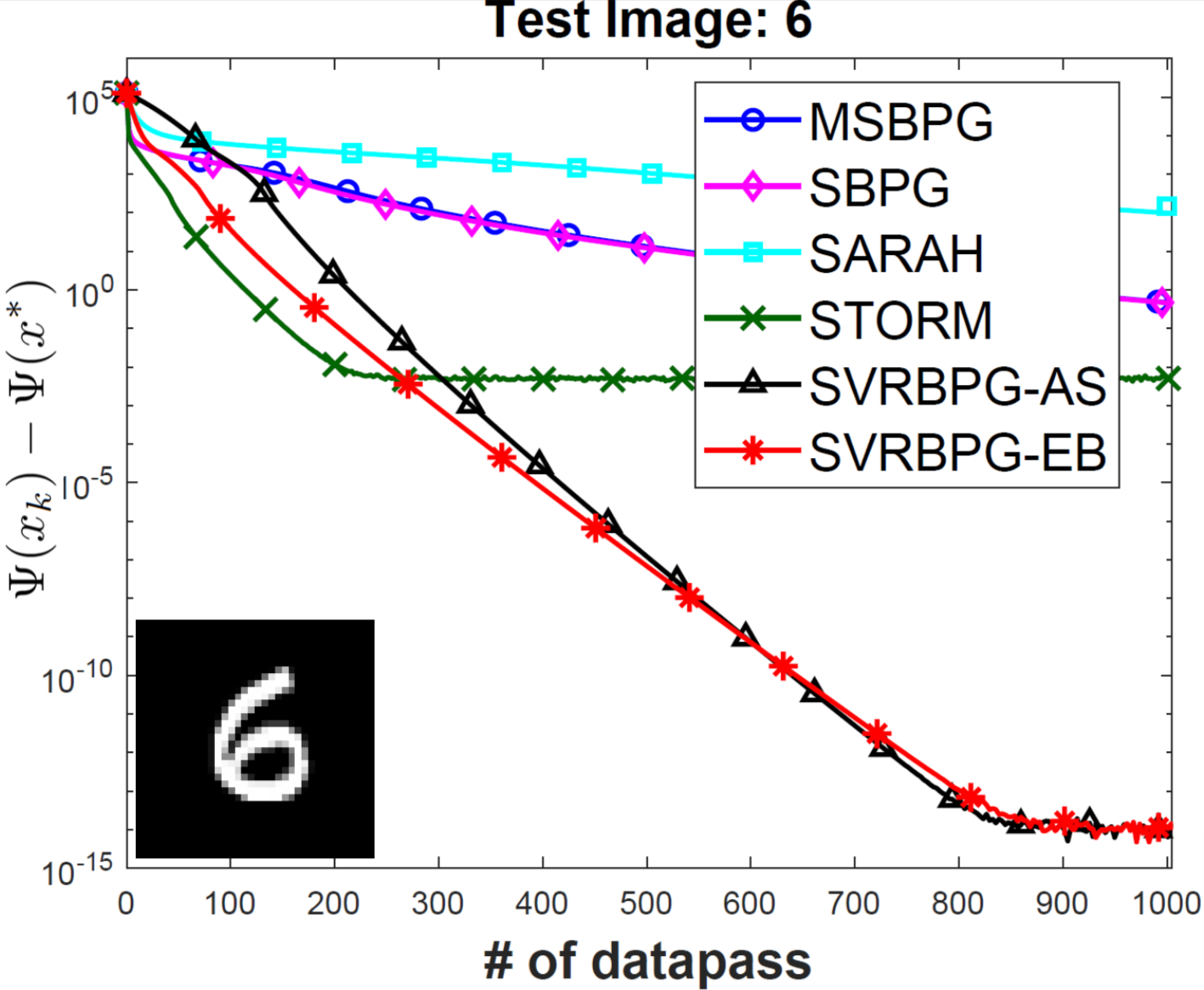}
    \caption{Experiments for $\ell_1$-regularized quadratic inverse problem. We omit the minimal Fr\'echet subdifferential plot for image 6 and replace it with the mismatch factor of new gradient mapping. } 
    \label{fig:quad-inv-sparse}
\end{figure}

In Figure \ref{fig:quad-inv-sparse}, we present the numerical results for the quadratic inverse problem \eqref{prob:quad-inv} with a nonzero sparse $\ell_1$ regularizer. For this case, the MSBPG and SBPG shares a similar performance and both of them are outperformed by the SVRBPG-EB and SVRBPG-AS. For the subproblems of SVRBPG-EB, if the trial solution without $x\in\cX_s$ constraint violates this constraint, then we project this point to $\cX_s$ and run 25 primal-dual iterations for formulation \eqref{prob:splitting}. According to the record, only 2\%, 2.2\%, and 2.2\% iterations need extra primal-dual iterations for the three test cases respectively, and they are only required for the first 8 epochs. In particular, unlike the first dataset where our new gradient mapping $\cDphl(\cdot)\equiv\nabla\Psi(\cdot)$ always hold. In this nonsmooth setting, we present the scatter plot of the mismatch factor ${\mathrm{dist}^2(0,\partial\Psi(\cdot))}/{\|\cDphl(\cdot)\|^2}$. To save computation, we only compute this factor at the first iteration of each epoch. Although $\cDphl(\cdot)$ no longer exactly recover $\partial\Psi(\cdot)$ due to the nonsmoothness of the $\ell_1$ regularizer, $\|\cDphl(\cdot)\|$ still approximates $\mathrm{dist}(0,\partial\Psi(\cdot))$ very well, which also justifies the use of the our newly defined gradient mapping. Similar to the differentiable instances, the non-Bregman variance-reduced method (prox-)SARAH still optimizes slower than SBPG and MSBPG. For STORM, though variance reduction together with adaptive stepsizes facilitate a faster convergence than SBPG and MSBPG, it is not as fast as the proposed methods.

\section{Conclusion and future work}
\label{section:comparison}
\noindent\textbf{Conclusion.}\, In this paper, we point out two important gaps in the sample complexity research of (unconstrained) stochastic BPG method: the absence of an instance-free (worst-case)  complexity result and the inability to get improved complexity by the popular acceleration techniques for SA. We resolve these issues by introducing the KC-regularity concept, under which our newly proposed dual gradient mapping possesses an instance-free constant mismatch against the Fr\'echet measure, and a Lipschitz-like bound for gradient differences is derived. With these tools, we study both the instance-dependent and instance-free complexities for the finite-sum nonconvex smooth-adaptable problem class. Under the most popular squared primal gradient mapping measure (instance-dependent), we improve the existing $O(\epsilon^{-2})$ sample complexity to $O(\sqrt{n}\epsilon^{-1})$. Under the squared dual gradient mapping measure and the standard Fr\'echet measure, we derive an instance-free $O(\sqrt{n}L_h(\cX_\epsilon)\epsilon^{-1})$ sample complexity, where $L_h(\cX_\epsilon)$ contains potential $\epsilon$-dependence for the worst-case hard instances. To our best knowledge, such improvement has yet been achieved by the existing nonconvex stochastic BPG methods. \vspace{0.2cm}

\noindent\textbf{Future works.}   Note that this paper mainly considers the unconstrained BPG method over $\mathbb{R}^d$, where the kernel conditioning is naturally defined for set with bounded Euclidean $\ell_2$-norm radius. However, there are also many problems with constraints, e.g., the optimization problems over $\mathbb{R}_+^d$ paired with regularized Burg's entropy kernel \cite{bauschke2017descent}. Therefore, it remains an interesting question to properly extend kernel conditioning to general kernels that are essentially smooth over a nontrivial subset of $\RR^d$.

\paragraph{Acknowledgment.}  This research is fully supported by the Singapore Ministry of Education (MOE) AcRF Grant, under the WBS number A-0009530-04-00. We also acknowledge the anonymous reviewers for their insightful comments and suggestions that help us improve the result of the paper.

\appendix
\section{Supporting Lemmas}
\label{appendix:SupLem}
\begin{lemma}[Three-Point Property of Tseng \cite{tseng2008accelerated}]
    \label{lemma:Tseng-3point}
    Let $\phi(x)$ be a convex function, and let $D_h(\cdot,\cdot)$ be the Bregman distance for $h(\cdot)$. For a given vector $z$, let
    $z_+:=\argmin_{x\in Q} \phi(x) + D_h(x,z).$
    Then 
    $$\phi(x)+D_h(x,z)\geq\phi(z_+) + D_h(z_+,z) + D_h(x,z_+), \quad\forall x\in Q.$$
\end{lemma}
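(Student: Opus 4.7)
The plan is to combine three simple ingredients: the first-order optimality of $z_+$, the subgradient inequality for the convex function $\phi$, and a pure algebraic identity among the three Bregman divergences appearing in the statement. The conclusion will pop out without any further convexity or smoothness assumption on $h$ beyond differentiability, so the proof is really just bookkeeping, not analysis.

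First I would write down the optimality condition for $z_+ = \argmin_{x\in Q}\{\phi(x)+D_h(x,z)\}$. Since $D_h(\cdot,z)$ is differentiable in its first argument with $\nabla_x D_h(x,z)=\nabla h(x)-\nabla h(z)$, and $\phi$ is convex, the necessary condition is the existence of a subgradient $u\in\partial\phi(z_+)$ such that
$$\langle u+\nabla h(z_+)-\nabla h(z),\, x-z_+\rangle\geq 0, \quad \forall\, x\in Q,$$
i.e. $\langle u, x-z_+\rangle \geq \langle \nabla h(z)-\nabla h(z_+),\, x-z_+\rangle$ for every $x\in Q$.

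Next, convexity of $\phi$ gives $\phi(x)\geq \phi(z_+)+\langle u,x-z_+\rangle$, and chaining with the bound above yields
$$\phi(x)\geq \phi(z_+)+\langle \nabla h(z)-\nabla h(z_+),\, x-z_+\rangle, \quad \forall\, x\in Q.$$
Adding $D_h(x,z)$ to both sides, the lemma reduces to proving the identity
$$\langle \nabla h(z)-\nabla h(z_+),\, x-z_+\rangle + D_h(x,z) \;=\; D_h(z_+,z)+D_h(x,z_+).$$
I would verify this by directly expanding $D_h(a,b)=h(a)-h(b)-\langle \nabla h(b),a-b\rangle$ on both sides: the $h(x)$ terms match, the $h(z)$ and $h(z_+)$ terms cancel, and the two remaining inner products regroup as $-\langle \nabla h(z),z_+-z\rangle-\langle \nabla h(z_+),x-z_+\rangle$, giving exactly the right-hand side.

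The only step that is not purely mechanical is this last identity, and the ``main obstacle'' is purely bookkeeping—keeping careful track of which gradient is evaluated at which base point so that the cross inner products recombine correctly. Once that check is done, the three-point inequality is immediate, and no regularity assumption on $h$ beyond differentiability (and convexity of $\phi$ to justify the subgradient inequality) is required.
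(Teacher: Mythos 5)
Your proof is correct and is the standard argument for Tseng's three-point property: variational optimality of $z_+$ over $Q$, the convex subgradient inequality for $\phi$, and the purely algebraic Bregman three-point identity $\langle \nabla h(z)-\nabla h(z_+),x-z_+\rangle + D_h(x,z) = D_h(z_+,z)+D_h(x,z_+)$, which you verify correctly. The paper itself cites this lemma from Tseng's work without supplying a proof, so there is no in-paper argument to compare against; the only implicit hypothesis worth flagging is that the variational inequality $\langle u+\nabla h(z_+)-\nabla h(z),\,x-z_+\rangle\ge 0$ for all $x\in Q$ relies on $Q$ being convex, which is taken for granted in the standard setting.
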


\begin{lemma}[Lemma 2 finite-sum case of \cite{pham2020proxsarah}]
\label{lemma:proxSARAH-finite-sum}
    Let $v_{s,k}$ be generated by \eqref{alg:sarah-small}, suppose $|\mathcal{B}_{s,k}| = b$ and the sampled index are uniformly randomly picked from $[n]$ with replacement, then  
    \begin{eqnarray}
        \mathbb{E}\Big[\|\nabla f(x_{s,k})-v_{s,k}\|^2\,\big|\,x_{s,0}\Big] \leq \sum_{j=0}^{k-1} \mathbb{E}\Big[\|v_{s,j+1}-v_{s,j}\|^2-\|\nabla f(x_{s,j+1})-\nabla f(x_{s,j})\|^2\,\big|\, x_{s,0}\Big], \nonumber 
    \end{eqnarray}
    where the expectation term of $\|v_{s,j+1}-v_{s,j}\|^2$ satisfies
    \begin{eqnarray}
        \mathbb{E}\Big[\|v_{s,j+1}-v_{s,j}\|^2\,\big|\,x_{s,0}\Big] \leq  \mathbb{E}\bigg[\|\nabla f(x_{s,j+1})-\nabla f(x_{s,j})\|^2 + \frac{1}{bn}\sum_{i=1}^n\|\nabla f_i(x_{s,j+1})-\nabla f_i(x_{s,j})\|^2\,\big|\,x_{s,0}\bigg].  \nonumber
    \end{eqnarray}  
    In particular, we have slightly modified the second inequality to suit our analysis. 
\end{lemma}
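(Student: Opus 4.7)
The plan is to treat the SARAH/SPIDER recursion as a telescoping sum of martingale differences and then expand the conditional variance of each increment in closed form.

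First, I would introduce the estimation error $e_j := \nabla f(x_{s,j}) - v_{s,j}$ and the per-step residual
$\zeta_j := \bigl(\nabla f(x_{s,j}) - \nabla f(x_{s,j-1})\bigr) - (v_{s,j} - v_{s,j-1}).$
Because the samples in $\mathcal{B}_{s,j}$ are drawn independently of the history given $(x_{s,j-1}, x_{s,j})$ and the stochastic gradient estimator is unbiased, I would verify via the tower property that
$\EE[\zeta_j \mid \mathcal{F}_{s,j-1}] = 0,$
where $\mathcal{F}_{s,j-1}$ is the filtration up to step $j-1$. By the SARAH recursion this yields $e_j = e_{j-1} - \zeta_j$, and since $v_{s,0} = \nabla f(x_{s,0})$ gives $e_0 = 0$, telescoping produces $e_k = -\sum_{j=1}^{k}\zeta_j$.

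Next I would exploit martingale orthogonality: for $j\neq j'$, $\EE[\langle\zeta_j,\zeta_{j'}\rangle]=0$, so
$\EE[\|e_k\|^2\mid x_{s,0}] = \sum_{j=1}^{k}\EE[\|\zeta_j\|^2\mid x_{s,0}].$
Then I would use the standard identity $\EE\|Y - \EE Y\|^2 = \EE\|Y\|^2 - \|\EE Y\|^2$ with $Y = v_{s,j}-v_{s,j-1}$, conditioned on $\mathcal{F}_{s,j-1}$; its conditional mean is exactly $\nabla f(x_{s,j})-\nabla f(x_{s,j-1})$, so
$\EE[\|\zeta_j\|^2\mid\mathcal{F}_{s,j-1}] = \EE[\|v_{s,j}-v_{s,j-1}\|^2\mid\mathcal{F}_{s,j-1}] - \|\nabla f(x_{s,j})-\nabla f(x_{s,j-1})\|^2.$
Taking a further expectation conditional on $x_{s,0}$ and summing over $j$ produces the first inequality of the lemma.

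For the second inequality, I would write the mini-batch increment as the sample mean
$v_{s,j+1}-v_{s,j} = \tfrac{1}{b}\sum_{\xi\in\mathcal{B}_{s,j+1}}\bigl(\nabla f_\xi(x_{s,j+1})-\nabla f_\xi(x_{s,j})\bigr)$
of $b$ i.i.d. draws (with replacement) from the uniform distribution on $[n]$. Its mean is $\nabla f(x_{s,j+1})-\nabla f(x_{s,j})$, and by the $\mathrm{Var}(\bar Y) = \mathrm{Var}(Y)/b$ formula,
$\EE[\|v_{s,j+1}-v_{s,j}\|^2\mid\mathcal{F}_{s,j}] = \|\nabla f(x_{s,j+1})-\nabla f(x_{s,j})\|^2 + \tfrac{1}{b}\mathrm{Var}_\xi\bigl[\nabla f_\xi(x_{s,j+1})-\nabla f_\xi(x_{s,j})\bigr].$
Bounding the variance by the raw second moment $\tfrac{1}{n}\sum_{i=1}^n\|\nabla f_i(x_{s,j+1})-\nabla f_i(x_{s,j})\|^2$ and taking expectation conditional on $x_{s,0}$ yields the second (slightly relaxed) inequality in the form stated.

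Essentially everything here is a direct computation; the only point that needs care is the conditioning structure. The main obstacle is therefore bookkeeping: making sure that at each step the expectation is taken with respect to the right $\sigma$-algebra so that the martingale-difference orthogonality truly applies (in particular, that $\mathcal{B}_{s,j}$ is independent of $\mathcal{F}_{s,j-1}$ given the algorithm's state), and that the ``slight modification'' of the second bound, namely the factor $\tfrac{1}{bn}$ rather than $\tfrac{1}{b}-\tfrac{1}{bn}$, is justified by simply upper-bounding the variance by the second moment.
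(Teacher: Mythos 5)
The paper does not actually prove this lemma; it imports it as Lemma~2 of \cite{pham2020proxsarah} with a self-declared modification to the second display, so there is no proof in the source to compare against. Your reconstruction is correct and is the same martingale/variance-decomposition argument that the cited reference uses: you telescope the SARAH error into martingale differences $\zeta_j$, invoke orthogonality so that $\mathbb{E}[\|\nabla f(x_{s,k})-v_{s,k}\|^2\mid x_{s,0}]=\sum_j\mathbb{E}[\|\zeta_j\|^2\mid x_{s,0}]$, and then expand each $\mathbb{E}[\|\zeta_j\|^2\mid\mathcal{F}_{s,j-1}]$ with the conditional-variance identity $\mathbb{E}\|Y-\mathbb{E}Y\|^2=\mathbb{E}\|Y\|^2-\|\mathbb{E}Y\|^2$ applied to $Y=v_{s,j}-v_{s,j-1}$, which in fact gives the first display with equality rather than merely $\le$. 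For the second display your derivation via the sample-mean variance is also right. Two small points for precision: the sign in the recursion should be $e_j=e_{j-1}+\zeta_j$ (not $e_{j-1}-\zeta_j$), which has no effect on the squared norms; and the paper's ``slight modification'' is better described as dropping the subtractive term, i.e.\ relaxing the exact identity $\mathbb{E}[\|v_{s,j+1}-v_{s,j}\|^2\mid\mathcal{F}_{s,j}]=\bigl(1-\tfrac{1}{b}\bigr)\|\nabla f(x_{s,j+1})-\nabla f(x_{s,j})\|^2+\tfrac{1}{bn}\sum_i\|\nabla f_i(x_{s,j+1})-\nabla f_i(x_{s,j})\|^2$ by bounding $(1-1/b)\le 1$, rather than as changing a coefficient from $\tfrac{1}{b}-\tfrac{1}{bn}$ to $\tfrac{1}{bn}$ (the $\tfrac{1}{bn}$ coefficient is the same in both); your stated justification---bounding the variance by the raw second moment---is exactly the right one.
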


\section{Proof of Section \ref{section:preliminary}}
\subsection{Proof of Proposition \ref{proposition: Poly-kernel-condition}}
\label{appdx:proposition: Poly-kernel-condition}
\begin{proof}
    First, direct computation gives 
    $\nabla^2 h(x) = (\|x\|^r+\alpha)\cdot I + r\|x\|^{r-2}\cdot xx^\top$. For $\forall x\in\RR^d$, we have $\lambda_{\max}(\nabla^2h(x))=(r+1)\|x\|^{r} + \alpha$ and $\lambda_{\min}(\nabla^2h(x))=\|x\|^{r} + \alpha$. Then for any compact set $\cX\subseteq\RR^n$ with diameter denoted by $\mathrm{diam}(\cX)=d_\cX$, let $y \in \mathop{\mathrm{argmax}}_{u\in\cX} \|u\|$ and $x \in \mathop{\mathrm{argmin}}_{u\in\cX} \|u\|$, then 
    \begin{eqnarray} 
    \label{prop:poly-KC-1}
        \kappa_h(\cX) = \frac{\lambda_{\max}(\nabla^2h(y))}{\lambda_{\min}(\nabla^2h(x))} = \frac{(r+1)\|y\|^r+\alpha}{\|x\|^r+\alpha}\leq \frac{(r+1)(\|x\|+d_\cX)^r+\alpha}{\|x\|^r+\alpha}.
    \end{eqnarray} 
    When $d_\cX \leq \|x\|/r$, \eqref{prop:poly-KC-1} and the fact that $(1+1/r)^r\leq e < 3, \forall r>0$ indicate 
    \begin{equation}
        \label{prop:poly-KC-1.5}
        \kappa_h(\cX) \leq (r+1) \left(1+\frac{d_\cX}{\|x\|}\right)^r+\frac{\alpha} {\|x\|^r+\alpha}\leq (r+1) \left(1+\frac{1}{r}\right)^r+1\leq 3r+4, \quad  \forall r\geq0.
    \end{equation} 
    This proves half of (i), the other half when $\mathrm{diam}(\cX)\leq\alpha^\frac{1}{r}/r$ is indicated by (ii). Thus we will then directly proceed with the proof of (ii).  For any $\cX$ with $\mathrm{diam}(\cX) = d_\cX \leq \delta$, \eqref{prop:poly-KC-1} indicates that   
    \begin{eqnarray}
    \label{prop:poly-KC-2}
        \kappa_h^\delta \leq (r+1)\cdot\sup\left\{ \frac{(t+\delta)^r}{t^r+\alpha}  : t\geq0\right\} + 1.
    \end{eqnarray}
    By direct computation, the function $\rho(t):=\frac{(t+\delta)^r}{t^r+\alpha}$ attains its maximal value at 
    $$t^* = \begin{cases}
        \alpha^{\frac{1}{r^2-r}}\delta^{-\frac{1}{r-1}}, & \mbox{ if }r>1\\
        0, & \mbox{ if }r\leq1, \delta^r\geq\alpha\\
        +\infty, & \mbox{ if }r\leq1, \delta^r<\alpha
    \end{cases}\qquad\mbox{with}\qquad \rho(t^*) = \begin{cases}
        \big(1+\delta^\frac{r}{r-1}\alpha^{-\frac{1}{r-1}}\big)^{r-1}, & \mbox{ if }r>1\\
        \delta^r/\alpha, & \mbox{ if }r\leq1, \delta^r\geq\alpha\\
        1, & \mbox{ if } r\leq1, \delta^r<\alpha
    \end{cases}$$
    The above result only requires elementary computation of critical points, which will be omitted for simplicity. 
    Substituting the above bounds to \eqref{prop:poly-KC-2} proves that 
    $$\kappa_h^\delta\leq \begin{cases}
        (r+1)\max\big\{1,\frac{\delta^r}{\alpha}\big\}+1,&\mbox{ if } r\leq 1\\
        (r+1)\big(1+\big(\frac{\delta^r}{\alpha}\big)^{\frac{1}{r-1}}\big)^{r-1} + 1, &\mbox{ if } r>1
    \end{cases}$$
    In particular, when $r>1$ and $\delta\leq \alpha^\frac{1}{r}/r \leq \alpha^\frac{1}{r}/r^{\frac{r-1}{r}}$, we have $\big(\frac{\delta^r}{\alpha}\big)^{\frac{1}{r-1}}\leq \frac{1}{r-1}$. Following the same logic of the last inequality in \eqref{prop:poly-KC-1.5}, we obtain $\kappa_h^\delta\leq 3r+4$ in this case. This completes the proof. 
\end{proof}

\subsection{Proof of Proposition \ref{proposition:Lipschitz}}
\label{appdx:proposition:Lipschitz}
Before proving the proposition, let us introduce a simple but not straightforward linear algebra result.   
\begin{lemma}
    \label{lemma:B.1}
    Let $A$ be a symmetric matrix, and let $B\succeq0$ be positive semidefinite matrix. Then $-B\preceq A\preceq B$ indicates that $\|A\|\leq \|B\|$.
\end{lemma} 
\begin{proof}
    For any symmetric but not semidefinite matrix $A$, it is easy to verify that 
    \begin{equation}
        \label{lm:B.1-1}
        \|A\| = \max\big\{|\lambda_{\max}(A)|,|\lambda_{\min}(A)|\big\}.
    \end{equation}
    Let $u\neq0$ be the eigenvector of $A$ associated with the maximum eigenvalue.  Then by Rayleigh’s principle for maximum eigenvalue, $B\succeq A$ indicates that $B-A\succeq0$ and hence 
    \begin{equation}
        \label{lm:B.1-2}
        0\leq \frac{u^\top(B-A)u}{u^\top u} \leq \max_{x\neq0}\left\{\frac{x^\top Bx}{x^\top x}\right\} - \frac{u^\top(B-A)u}{u^\top u} = \lambda_{\max}(B) - \lambda_{\max}(A). 
    \end{equation}
    Similarly, $A\succeq-B$ indicates that $B-(-A)\succeq0$, then \eqref{lm:B.1-2} immediately gives
    \begin{equation}
        \label{lm:B.1-3}
        0\leq  \lambda_{\max}(B) - \lambda_{\max}(-A) = \lambda_{\max}(B) + \lambda_{\min}(A).
    \end{equation}
    Combining \eqref{lm:B.1-1}-\eqref{lm:B.1-3} and the fact that $\|B\| = \lambda_{\max}(B)$ for p.s.d. matrix proves the lemma.
\end{proof}
\noindent Given the above technical lemma, the proof of Proposition \ref{proposition:Lipschitz} becomes straightforward. 
\begin{proof}
     By Lemma \ref{lemma:B.1} and the assumption that $f$ is $L$-smooth adaptable to $h$, we know 
     $$\max\left\{\|\nabla^2f(x)\|: x\in\cX\right\} \leq \max\left\{L\|\nabla^2h(x)\|: x\in\cX\right\} = L\cdot L_h(\cX).$$
     As $\cX$ is convex, the line segment $[x,y]\subseteq\cX$ and hence 
     $$\|\nabla f(z_\theta)-\nabla f(y)\|^2\leq L^2L_h^2(\cX)\|z_\theta-y\|^2 = L^2\theta^2 L_h^2(\cX)\|x-y\|^2.$$
     Combined with    the fact that $$D_h(x,y)\geq\frac{\mu_h([x,y])}{2}\|x-y\|^2\geq  \frac{\mu_h(\cX)}{2}\|x-y\|^2,$$
     we obtain
     $$\frac{\|\nabla f(z_\theta)-\nabla f(y)\|^2}{2L^2\mu_h(\cX)} \leq \theta^2\cdot\frac{L_h^2(\cX)}{\mu_h^2(\cX)}\cdot\frac{\mu_h(\cX)}{2}\|x-y\|^2\leq \theta^2\kappa_h^2(\cX)D_h(x,y),$$
     where KC-regularity guarantees that $\kappa_h(\cX)$ is always upper bounded by $\kappa_h^\delta$.
\end{proof}

\section{Proof of Section \ref{section:VR}}
\label{appendix:sec:VR}
\subsection{Proof of Lemma \ref{lemma:restricted-grd-mapping}}
\begin{proof}
    Denote $\widetilde{\mathcal{G}}=\frac{x_{s,k}-\bar{x}_{s,k+1}}{\eta}$, then $\|\cGshe(x_{s,k})-\widetilde{\mathcal{G}}\| = \frac{1}{\eta}\|\bar{x}_{s,k+1}-\hat{x}_{s,k+1}\|$ holds by definition. 
    By the optimality of $\hat{x}_{s,k+1}$ and $\bar{x}_{s,k+1}$ for the corresponding subproblems, Tseng's three point property (Lemma \ref{lemma:Tseng-3point}) indicates that
    \begin{eqnarray*}
        &&\langle v_{s,k}, \bar{x}_{s,k+1}\rangle + \phi(\bar{x}_{s,k+1}) + \frac{D_h(\bar{x}_{s,k+1},x_{s,k})}{\eta} + \frac{D_h(\hat{x}_{s,k+1},\bar{x}_{s,k+1})}{\eta}\qquad\,\,\,\\
        &\leq& \langle v_{s,k}, \hat{x}_{s,k+1}\rangle + \phi(\hat{x}_{s,k+1})+ \frac{D_h(\hat{x}_{s,k+1},x_{s,k})}{\eta} 
    \end{eqnarray*} 
    and 
    \begin{eqnarray*}
        && \langle \nabla f(x_{s,k}), \hat{x}_{s,k+1}\rangle + \phi(\hat{x}_{s,k+1})+\frac{D_h(\hat{x}_{s,k+1},x_{s,k})}{\eta} + \frac{D_h(\bar{x}_{s,k+1},\hat{x}_{s,k+1})}{\eta}\\
        &\leq& \langle \nabla f(x_{s,k}), \bar{x}_{s,k+1}\rangle + \phi(\bar{x}_{s,k+1})+\frac{D_h(\bar{x}_{s,k+1},x_{s,k})}{\eta}
    \end{eqnarray*} 
    Summing up the two inequalities and applying Lemma \ref{lemma:grd-vs-grdmap} gives 
    \begin{align} &\frac{\mu_h([\bar{x}_{s,k+1},\hat{x}_{s,k+1}])}{\eta}\cdot\|\bar{x}_{s,k+1}-\hat{x}_{s,k+1}\|^2 \leq \frac{D_h(\bar{x}_{s,k+1},\hat{x}_{s,k+1})}{\eta} +  \frac{D_h(\hat{x}_{s,k+1},\bar{x}_{s,k+1})}{\eta}\nonumber\\
    &\qquad\qquad\qquad\qquad\qquad\,\,\,\, \leq  \langle v_{s,k}-\nabla f(x_{s,k}), \hat{x}_{s,k+1}-\bar{x}_{s,k+1}\rangle \leq  \|v_{s,k}-\nabla f(x_{s,k})\|\cdot\|\hat{x}_{s,k+1}-\bar{x}_{s,k+1}\|\nonumber.
    \end{align} 
    Hence $\|\hat{x}_{s,k+1}-\bar{x}_{s,k+1}\|\leq \frac{\eta\|v_{s,k}-\nabla f(x_{s,k})\|}{\mu_h([\bar{x}_{s,k+1},\hat{x}_{s,k+1}])}$, and $\big\|\cGshe(x_{s,k})-\widetilde{\mathcal{G}}\big\|\leq\frac{\|v_{s,k}-\nabla f(x_{s,k})\|}{\mu_h([\bar{x}_{s,k+1},\hat{x}_{s,k+1}])}$. As a result, 
    \begin{eqnarray*}
        \big\|\cGshe(x_{s,k})\big\|^2 \leq \left(\big\|\widetilde{\mathcal{G}}\big\|+\big\|\cGshe(x_{s,k})-\widetilde{\mathcal{G}}\big\|\right)^2\leq  \frac{2\|x_{s,k}-\bar{x}_{s,k+1}\|^2}{\eta^2} + \frac{2\|v_{s,k}-\nabla f(x_{s,k})\|^2}{\mu_h^2(\mathcal{X}_s)},
    \end{eqnarray*} 
    where the last inequality is because $(a+b)^2\leq 2a^2+2b^2$ and $\mu_h^2\big([\bar{x}_{s,k+1},\hat{x}_{s,k+1}]\big)\geq\mu_h^2(\cX_s)$. 
\end{proof}

\subsection{Proof of Lemma \ref{lemma:descent-proxSARAH-finite}}
\begin{proof} 
    First of all, by the update rule of $x_{s,k+1}$, we have   
    \begin{eqnarray}
    \label{lm:descent-proxSARAH-finite-1}
        \Psi(x_{s,k+1}) 
        \!\!\!& = & \!\!\!f\big(x_{s,k} + \gamma (\bar{x}_{s,k+1}-x_{s,k})\big) + \phi\big((1-\gamma)x_{s,k} + \gamma\bar{x}_{s,k+1}\big) \\
        &\overset{(i)}{\leq}& \!\!\!f(x_{s,k}) + \gamma\langle\nabla f(x_{s,k}),\bar{x}_{s,k+1}\!-x_{s,k}\rangle \!+\! LD_h\left(x_{s,k+1},x_{s,k}\right) \!+\! (1-\gamma)\phi(x_{s,k})+\gamma\phi(\bar{x}_{s,k+1})\nonumber\\
        & \overset{(ii)}{\leq} & \!\!\!\Psi(x_{s,k}) + L\kappa_h^\delta\gamma^2D_h(\bar{x}_{s,k+1},x_{s,k}) + \gamma\langle\mathcal{E}_{s,k}+v_{s,k},\bar{x}_{s,k+1}-x_{s,k}\rangle  + \gamma\left(\phi(\bar{x}_{s,k+1})-\phi(x_{s,k})\right)\nonumber\\
        &\overset{(iii)}{\leq}&\!\!\! \Psi(x_{s,k}) + L\kappa_h^\delta\gamma^2D_h(\bar{x}_{s,k+1},x_{s,k}) + \frac{\gamma\eta\|\mathcal{E}_{s,k}\|^2}{\mu_h(\mathcal{X}_s)} + \frac{\gamma\mu_h(\mathcal{X}_s)}{4\eta} \|\bar{x}_{s,k+1}-x_{s,k}\|^2\nonumber\\
        & & \!\!\!-\frac{\gamma}{\eta}D_h(\bar{x}_{s,k+1},x_{s,k}) - \frac{\gamma}{\eta}D_h(x_{s,k},\bar{x}_{s,k+1})\nonumber\\
        &\leq & \!\!\!\Psi(x_{s,k}) - \left(\frac{\gamma}{\eta}- L\kappa_h^\delta\gamma^2\right)D_h(\bar{x}_{s,k+1},x_{s,k}) -\frac{\gamma}{2\eta}D_h(x_{s,k},\bar{x}_{s,k+1})+ \frac{\gamma\eta\|\mathcal{E}_{s,k}\|^2}{\mu_h(\mathcal{X}_s)} \,,\nonumber
    \end{eqnarray}
    where (i) is due to Assumption \ref{assumption:L-smad-finite-sum}, Lemma \ref{lemma:descent-lemma}, and the convexity of $\phi$, (ii) is due to the definition of $\mathcal{E}_{s,k}$ in Lemma \ref{lemma:restricted-grd-mapping} and the following scaling property
    \begin{eqnarray}
        D_h\left(x_{s,k+1},x_{s,k}\right)  \leq \frac{L_h(\mathcal{X}_s)}{2}\|x_{s,k+1}-x_{s,k}\|^2 = \frac{\gamma^2L_h(\mathcal{X}_s)}{2}\|\bar{x}_{s,k+1}-x_{s,k}\|^2 \leq \gamma^2\kappa_h^\delta D_h(\bar{x}_{s,k+1},x_{s,k})\,,\nonumber
    \end{eqnarray}
    and (iii) is due to the optimality of  $\bar{x}_{s,k+1}$ to the corresponding subproblem and Tseng's three point property (Lemma \ref{lemma:Tseng-3point}). Finally, by Lemma \ref{lemma:restricted-grd-mapping}, we also have
    $$\big\|\cGshe(x_{s,k})\big\|^2 \leq \frac{2\|x_{s,k}-\bar{x}_{s,k+1}\|^2}{\eta^2} + \frac{2\|\mathcal{E}_{s,k}\|^2}{\mu_h^2(\mathcal{X}_s)}\,.$$
    Multiplying both sides of the above inequality by $\frac{\gamma\eta\mu_h(\mathcal{X}_s)}{8}$ and add it to \eqref{lm:descent-proxSARAH-finite-1} proves the lemma. 
\end{proof}

\subsection{Proof of Lemma \ref{lemma:proxSARAH-grad-map-mid}}
\begin{proof}
    First, substituting the gradient estimation bound in Lemma \ref{lemma:err-sarah} to Lemma \ref{lemma:descent-proxSARAH-finite}, we have the following descent result throughout the $s$-th epoch

\begin{align}
    \label{lm:proxSARAH-grad-map-mid-1}
    &\mathbb{E}\Big[\Psi(x_{s,\tau_s}) \,\big|\, x_{s,0}\Big] \!\leq \Psi(x_{s,0}) \!-\! \mathbb{E}\bigg[\frac{\gamma\eta\mu_h(\mathcal{X}_s)}{8}\!\sum_{k=0}^{\tau_s-1}\!\big\|\cGshe(x_{s,k})\big\|^2 \!+\!\bigg(\!\frac{\gamma}{\eta}- L\kappa_h^\delta\gamma^2\!\bigg)D_h(\bar{x}_{s,k+1},x_{s,k}) \,\Big|\, x_{s,0}\bigg]\nonumber\\
    &\qquad\qquad\qquad\qquad\,\, +\frac{5\gamma\eta}{4\mu_h(\mathcal{X}_s)}\cdot\frac{2\gamma^2L^2\kappa_h^\delta L_h(\mathcal{X}_s)}{b}\mathbb{E}\bigg[\sum_{k=0}^{\tau_s-1}\sum_{j=0}^{k-1} D_h(\bar{x}_{s,j+1},x_{s,j}) \,\Big|\, x_{s,0}\bigg]\\
    &\quad\,\leq \Psi(x_{s,0}) \!-\! \mathbb{E}\bigg[\frac{\gamma\eta\mu_h(\mathcal{X}_s)}{8}\!\sum_{k=0}^{\tau_s-1}\big\|\mathcal{G}_{s,k}^\eta(x_{s,k})\big\|^2 \!+\bigg(\frac{\gamma}{\eta}- L\kappa_h^\delta\gamma^2-\frac{5\tau\gamma^3\eta L^2(\kappa_h^\delta)^2}{2b}\bigg)D_h(\bar{x}_{s,k+1},x_{s,k}) \,\Big|\, x_{s,0}\bigg]\nonumber
\end{align}

\noindent Suppose we choose $\eta  = \frac{\sqrt{2\tau}}{\sqrt{7\tau}+\sqrt{2b}}$ and we choose $\gamma = \frac{\sqrt{b}}{L\kappa_h^\delta\sqrt{\tau}}$. Then we have 
\begin{eqnarray}
    \label{lm:proxSARAH-grad-map-mid-2}
    &&\frac{\gamma}{\eta} - L\kappa_h^\delta\gamma^2 - \frac{5\tau\gamma^3\eta L^2(\kappa_h^\delta)^2}{2b}\nonumber\\
    & = & \gamma\eta\bigg(\frac{1}{\eta^2} - \frac{L\kappa_h^\delta\gamma}{\eta} - \frac{5\tau\gamma^2 L^2(\kappa_h^\delta)^2}{2b}\bigg)\\
    &=& \gamma\eta\bigg(1+\frac{\sqrt{14} -\sqrt{3.5}}{\sqrt{\tau/b}}\bigg)\nonumber \\
    &\geq& \gamma\eta\,.\nonumber
\end{eqnarray} 
Substitute the bound to the previous inequality, summing up over all epochs, and taking the expectation over all randomness proves the lemma. 
\end{proof}

\section{Proof of Proposition \ref{proposition:example}}
\label{appendix:example:counter-1}
\begin{proof}

By straight computation, we know $\|\nabla^2 f(x)\| \leq 2+(\alpha^2+4)\|x\|^{\alpha}$, hence \eqref{proposition: Poly-kernel-condition} indicates that $f$ is $(\alpha^2+4)$-smooth adaptable to the kernel $h(x) = \frac{\|x\|^2}{2}+\frac{\|x\|^{2+\alpha}}{2+\alpha}$, which proves the argument (i) of the proposition.  Next, for argument (ii), note that 
$$\frac{\partial \Psi(x)}{\partial x_1} = \frac{-1}{(\sqrt{2}+\ln(1+x_1^2))^2}\cdot\frac{2 x_1}{1+x_1^2} + \alpha x_1^{\alpha-1}x_2^2\quad\mbox{and}\quad\frac{\partial \Psi(x)}{\partial x_2} =  2x_1^{\alpha}x_2.$$
By the symmetry of the objective function, let us assume $x_1,x_2>0$ in the following discussion. By $\|\nabla \Psi(x)\|^2\leq\epsilon$, we must have $\|\frac{\partial\Psi(x)}{\partial x_2}\|^2\leq \epsilon$ and hence $x_2\leq \sqrt{\epsilon}/2x_1^\alpha.$ Together with $|x_1|\geq1$, the second term of $\frac{\partial\Psi(x)}{\partial x_1}$ satisfies 
$\alpha x_1^{\alpha-1}x_2^2\leq \frac{\alpha\epsilon}{4x_1^{\alpha+1}}\leq \frac{\alpha\epsilon}{4}.$ As $\alpha\epsilon/4\leq\sqrt{\epsilon}$ when $\epsilon\leq8/\alpha^2$, we have
$$\frac{1}{(\sqrt{2}+\ln(2x_1^2))^2}\cdot\frac{1}{x_1}\leq\frac{1}{(\sqrt{2}+\ln(1+x_1^2))^2}\cdot\frac{2 x_1}{1+x_1^2}\leq \sqrt{\epsilon}+\frac{\alpha\epsilon}{4}\leq 2\sqrt{\epsilon},$$
which indicates that $x_1\geq\Omega\big(\frac{1}{\sqrt{\epsilon}\ln^2\epsilon^{-1}}\big).$ This proves the argument (ii).

Finally, to prove argument (iii), let us prove by induction that $x_2^t=0$ and $x_1^t\geq1$ for all $t\geq0$. By initialization, $x_2^0=0$ and $x_1^0=1$.  Suppose $x_2^k=0$ and $x_1^k\geq1$, then $\frac{\partial \Psi(x^k)}{\partial x_2}=\frac{\partial h(x^k)}{\partial x_2}=0$. Then substituting these derivatives to the BPG subproblem yields
\begin{equation}
\label{appendix:example:counter-2-pf-1}    (x_1^{k+1},x_2^{k+1})=\argmin_{x_1,x_2}\,\, \left(\frac{\partial \Psi(x^k)}{\partial x_1}-\frac{1}{\lambda_k}\cdot\frac{\partial h(x^k)}{\partial x_1}\right)\cdot x_1 + \frac{1}{\lambda_k}\cdot \left(\frac{\|x\|^2}{2}+\frac{\|x\|^{2+\alpha}}{2+\alpha}  \right).
\end{equation} 
Note that $\frac{\partial h(x^k)}{\partial x_1}=x_1^k + (x_1^k)^{\alpha+1}$ when $x_2^k=0, x_1^k\geq1$ and $\nabla h(x^{k+1}) = x^{k+1}+\|x^{k+1}\|^\alpha\cdot x^{k+1}$, we can write the KKT condition of the convex problem \eqref{appendix:example:counter-2-pf-1} as 
\begin{eqnarray*}
    \lambda_k\cdot\frac{\partial \Psi(x^k)}{\partial x_1} - x_1^k - (x_1^k)^{\alpha+1} + x^{k+1}_1+\|x^{k+1}\|^\alpha\cdot x^{k+1}_1 & = & 0,\\
    x^{k+1}_2+\|x^{k+1}\|^\alpha\cdot x^{k+1}_2& = & 0.
\end{eqnarray*}
The second equation indicates that  $x_2^{k+1}=0$.  Because $\frac{\partial \Psi(x_1^k,0)}{\partial x_1}<0$ when $x_1^k\geq 1$, the first equation of KKT condition implies that $x_1^{k+1}>x_1^k\geq1$, and it can be further simplified to 
\begin{equation}
    \label{appendix:example:counter-2-pf-2}
     (x_1^{k+1})^{\alpha+1}+x_1^{k+1} =  (x_1^{k})^{\alpha+1}+x_1^{k} - \lambda_k\cdot\frac{\partial \Psi(x^k)}{\partial x_1}.
\end{equation}
By induction, we know $x_2^k=0$ and $x_1^k\geq1$ for $\forall k\geq0$. In addition, the above analysis also indicates that $x_1^k$ is monotonically increasing. 
 
Therefore, substituting the value of $\frac{\partial \Psi(x^k)}{\partial x_1}$ to \eqref{appendix:example:counter-2-pf-2} gives 
$$(x_1^{k+1})^{\alpha+1} \leq  (x_1^{k})^{\alpha+1} +\frac{2\lambda_k x_1^k}{(\sqrt{2}+\ln(1+(x_1^k)^2))^2(1+(x_1^k)^2)}\leq (x_1^{k})^{\alpha+1} + \frac{\lambda_k}{x_1^k}\leq (x_1^{k})^{\alpha+1}\left(1+\frac{\lambda_k}{(x_1^k)^{\alpha+2}}\right)$$
Take the $(\alpha+1)$-th root and apply the inequality that $(1+u)^\alpha\leq 1+\alpha u, \forall u\geq0, \forall \alpha\in[0,1]$, we obtain 
\begin{eqnarray}
    \label{appendix:example:counter-2-pf-3}
    x_1^{k+1} \leq  x_1^{k}+\frac{\lambda_k}{(\alpha+1)(x_1^k)^{\alpha+1}} \leq x_1^{k}+\frac{1}{(\alpha+1)(x_1^k)^{\alpha+1}},
\end{eqnarray} 
where the last inequality is because $\lambda_k\leq 1$ for both the update scheme \eqref{defn:BPG-standard} and  \eqref{defn:TBPG}.  
Define $k_t:=\inf\{k:x_1^{k}\geq t\}$, for $t=1,2,3,\cdots$, where $k_1=0$. Then we know $x_1^{k_t-1}<t$. On the other hand, \eqref{appendix:example:counter-2-pf-3} indicates that $x_1^{k_t}\leq t+\frac{1}{\alpha+1}$. Therefore, we have 
$$t+1\leq x_1^{k_{t+1}} \leq x_1^{k_{t+1}-1} + \frac{1}{(\alpha+1)t^{\alpha+1}}\leq\cdots\leq x_1^{k_t}+\frac{k_{t+1}-k_t}{(\alpha+1)t^{\alpha+1}}\leq t+\frac{1}{\alpha+1}+\frac{k_{t+1}-k_t}{(\alpha+1)t^{\alpha+1}}.$$
That is, $k_{t+1}-k_t\geq \alpha t^{\alpha+1}$. Notice that if we want $x_1^k\in[t,t+1)$, then we will need
$$k\geq k_t\geq \alpha\cdot\sum_{\tau=1}^{t-1} \tau^{\alpha+1}\geq\frac{\alpha}{\alpha+2}\cdot(t-1)^{\alpha+2}\geq \frac{\alpha}{\alpha+2}\cdot(x_1^k-2)^{\alpha+2}.$$
As a result, we have $x_1^k\leq \left(\frac{\alpha+2}{\alpha}\cdot k\right)^{\frac{1}{\alpha+2}}+2 = O(k^{\frac{1}{\alpha+2}}).$ Substituting this bound and $x_2^k=0$ to the gradient $\nabla \Psi(x^k)$ yields
$$\|\nabla \Psi(x^k)\| = \frac{2 x_1^k}{(\sqrt{2}+\ln(1+(x_1^k)^2))^2(1+(x_1^k)^2)}=\Tilde{\Omega}\left(k^{-\frac{1}{\alpha+2}}\right).$$
Then squaring both sides and using the fact that $x_1^k$ is monotonically increasing proves the argument (iii) of the proposition. 
\end{proof}

\section{Extension to multi-block kernel conditioning}
\label{section:multi-blocks}

Finally, we will roughly discuss how one can extend KC-regularity to block separable kernels, which is considered in \cite{bauschke2017descent,gao2020randomized,gao2021convergence,hanzely2021fastest}. In particular, we will mostly focus on the extension of Section \ref{section:VR}, while the extension of Section \ref{section:GrdMap-new} can be done similarly. To differentiate the meaning of the subscripts, in this specific section, we will use direct subscript to denote the iteration counters and use the subscript of $[\cdot]$ to denote the block index. For example, for epoch-wise algorithm such as Algorithm \ref{algorithm:proxSARAH}, $[x_{s,k}]_i$ denotes the $i$-th block of the $(s,k)$-th iteration $x_{s,k}$.   
In this case, suppose the decision variable $x\in\RR^d$ is separated into $m$ blocks, the kernel will take the form 
$h(x) = \sum_{i=1}^mh_i([x]_i),$ and the KC-regularity (Assumption \ref{assumption:kernel-conditioning}) will naturally be extended a block separable version. 
\begin{assumption}[Block KC-regularity]
    \label{assumption:kernel-conditioning-block}
    We say a block separable kernel $h(x) = \sum_{i=1}^mh_i([x]_i)$ satisfies the block KC-regularity if the component function $h_i$ is KC-regular for each $i\in[m]$.
\end{assumption}
A clear motivation for this modification is that when the different variable blocks $[x]_i$ have significant differences in magnitude, it is not realistic to require a bounded kernel condition number for the whole function. One may consider a bivariate case where $h_1(y)=h_2(y) = y^2/2 + y^4/4$. Letting $[x]_1 = 0$ and $[x]_2\to\infty$ will cause $\kappa_h(\cX)\to\infty$ even for singleton set $\cX = \{x=([x]_1,[x]_2)\}$. However, if we separately consider each $h_i$, then Proposition \ref{proposition: Poly-kernel-condition} implies the validity of Assumption \ref{assumption:kernel-conditioning-block}. In the special case where $h$ is element separable \cite{bauschke2017descent,hanzely2021fastest}, Assumption \ref{assumption:kernel-conditioning-block} is in fact very easy to satisfy. Even if $h_i$ has exponentially fast growth, e.g., $h_i(y) = y^2/2 + \exp\{y/R_i\}$, it satisfies kernel conditioning regularity with $\kappa_{h_i}^{\delta_i} = \exp\{\delta_i/R_i\}$, for  $\forall \delta_i>0$.

Now consider Algorithm \ref{algorithm:proxSARAH}, under Assumption \ref{assumption:kernel-conditioning-block} and suppose $\phi$ takes a separable structure $\phi(x) = \sum_{i=1}^m\phi_i([x]_i)$, then it is very natural to modify the Line 3 of Algorithm \ref{algorithm:proxSARAH} to 
``construct a convex set $\cX_s = \cX_s^1\times\cdots\times\cX_s^m$ such that $B([x_{s,0}]_i,\delta_i/2)\subseteq\cX_s^i$ and $\kappa_{h_i}(\cX_s^i)\leq \kappa_{h_i}^{\delta_i}$,'' with properly selected $\delta_i$.  If we closely inspect the analysis of Lemma \ref{lemma:descent-proxSARAH-finite}, \ref{lemma:err-sarah} and \ref{lemma:proxSARAH-grad-map-mid}, we can find that the key is to cancel out the error term $\langle \mathcal{E}_{s,k},\bar{x}_{s,k+1}-x_{s,k}\rangle$ in \eqref{lm:descent-proxSARAH-finite-1} by the Bregman divergence descent terms. Due to the block-wise kernel conditioning regularity (Assumption \ref{assumption:kernel-conditioning-block}), it is natural to upper bound it by 
\begin{equation}
    \langle \mathcal{E}_{s,k},\bar{x}_{s,k+1}-x_{s,k}\rangle \leq \sum_{i=1}^m\left(\frac{\mu_{h_i}(\cX_s^i)}{2}\big\|[\bar{x}_{s,k+1}]_i-[x_{s,k}]_i\big\|^2 + \frac{\|[\mathcal{E}_{s,k}]_i\|^2}{2\mu_{h_i}(\cX_s^i)}\right).
\end{equation}
Assuming $h_i$ to be globally $\mu_i$-strongly convex as a counterpart of Assumption \ref{assumption:SC}, we 
 obtain the following  counterpart of Lemma \ref{lemma:descent-proxSARAH-finite}:
\begin{eqnarray}
    \Psi(x_{s,k+1})\leq \Psi(x_{s,k})-\frac{\gamma\eta\mu_{\min}}{8}\big\|\cGshe(x_{s,k})\big\|^2\!-\!\bigg(\frac{\gamma}{\eta}- L\kappa_{\max}\gamma^2\!\bigg)\!D_h(\bar{x}_{s,k+1},x_{s,k}) \!+\!\sum_{i=1}^m\frac{5\gamma\eta\|[\mathcal{E}_{s,k}]_i\|^2}{4\mu_h(\mathcal{X}_s^i)},\,\nonumber
    \end{eqnarray}
    where $\mu_{\min} = \min_i \mu_i$ and $\kappa_{\max} = \max_i\kappa_{h_i}^{\delta_i}$. Then it remains to bound $\mathbb{E}\Big[\frac{\|[\mathcal{E}_{s,k}]_i\|^2}{\mu_h(\mathcal{X}_s^i)}\mid x_{s,0}\Big]$ for each $i$. 
    To tightly bound this term, we need to introduce the following lemma. 
    \begin{lemma}
    \label{lemma:block-func-diff}
        Under Assumption \ref{assumption:L-smad-finite-sum} and \ref{assumption:kernel-conditioning-block}, for any block index $i\in[m]$, denote $\mathcal{N}_i:=[m]\backslash \{i\}$. For any component function $f_s$ with $s\in[n]$, we have 
        $$\|\nabla_i f_s(x) - \nabla_i f_s(y)\|^2\leq 8L_s^2L_{h_i}\big(\big[[x]_i,[y]_i\big]\big)\cdot\sum_{j=1}^mL_{h_j}\big(\big[[x]_j,[y]_j\big]\big)\|[x]_j-[y]_j\|^2,$$
        where $\nabla_if_s(\cdot)$ denote the partial derivatives of $f_s$ w.r.t. the $i$-th variable block. 
    \end{lemma}
    We place the proof of this lemma at the end of this section to keep focus on the current discussion. Based on this result,   Lemma \ref{lemma:err-sarah} can be extended to  
    $$\sum_{i=1}^m\mathbb{E}\left[\frac{\|[\mathcal{E}_{s,k}]_i\|^2}{\mu_h(\mathcal{X}_s^i)}\,\Big|\, x_{s,0}\right] \leq  \frac{2\gamma^2(2\sqrt{2m}L)^2  \kappa_{\max}^2}{b}\cdot\mathbb{E}\bigg[\sum_{j=0}^{k-1} D_h(\bar{x}_{s,j+1},x_{s,j})\,\big|\,x_{s,0}\bigg].$$
    Therefore, if we do the following replacement in the analysis of Section \ref{section:VR}:
    $$\mu\leftarrow\mu_{\min}, \quad \kappa_{h}^\delta\leftarrow\kappa_{\max}, \quad \delta\leftarrow\delta_{\min}:=\min_{i}\delta_i, \quad L\leftarrow 2\sqrt{2m}L,$$
    then all the proof will remain valid and Theorem \ref{theorem:proxSARAH-finite} will still hold under such replacement.

    \begin{corollary} 
    \label{corollary:proxSARAH-finite-var} Suppose Assumption \ref{assumption:L-smad-finite-sum} and \ref{assumption:kernel-conditioning-block} hold.
    For any constant batch size $|\mathcal{B}_{s,k}|=b\in[n]$, let us set $\tau = \lceil n/b\rceil$,  $\eta = \frac{\sqrt{2\tau}}{\sqrt{7\tau}+\sqrt{2b}}$,  $\gamma = \frac{\sqrt{b}}{2\sqrt{2m}L\kappa_{\max}\sqrt{\tau}}$, and $S = \big\lceil\frac{16\Delta_\Psi}{\tau\gamma\eta\mu_{\min}\epsilon}\big\rceil$. Suppose the target accuracy satisfies $\epsilon\leq \frac{\delta^2_{\min}}{16}\cdot\min\big\{\frac{8mL^2\kappa_{\max}^2}{b\tau},\frac{1}{9\eta^2}\big\}$ and let $x_{\mathrm{out}}$ be uniformly randomly selected from all iterations, then there is a high probability event $\mathcal{A}$ such that $$\mathbb{E}\Big[\big\|\cGphe(x_{\mathrm{out}})\big\|^2 \,\big|\, \mathcal{A}\Big]\leq 4\epsilon\qquad\mbox{and}\qquad \mathrm{Prob}\left(\mathcal{A}\right)\geq1 - \frac{\eta\tau b\cdot\epsilon}{mL^2 \kappa_{\max}^2\delta^2_{\min}}-\frac{4\sqrt{\epsilon}}{\delta_{\min}},$$
    where $\mathrm{Prob}\left(\mathcal{A}\right) \geq 1-O(n\epsilon/m+\sqrt{\epsilon})\to1$ as $\epsilon\to0.$ Suppose we take the batch size $b=O(n^\alpha)$, $\alpha\in[0,1/2]$, then the total number of samples consumed is $O\big(\sqrt{mn}/\epsilon\big)$.
\end{corollary}  
\noindent Therefore, as long as the number of blocks is not too large, the complexity and convergence result is almost the same as the single block situation (Theorem \ref{theorem:proxSARAH-finite}). For example $m=2$ for the two-layer neural network considered in \cite{ding2023nonconvex}, for the multi-layer extension of \cite{ding2023nonconvex}, $m$ naturally be the number of layers of the network, which will be very mild.

\subsection{Proof of Lemma \ref{lemma:block-func-diff}}
\begin{proof}
    Before proving Lemma \ref{lemma:block-func-diff}, we need to establish a linear algebra result first. For any matrix $A,D\succ0$, and any matrix $B$, we have the following argument
    $$\begin{bmatrix}
        A & B\\
        B^\top & D
    \end{bmatrix}\succeq0 \Leftrightarrow \begin{bmatrix}
        I & -BD^{-1}\\
        0 & I
    \end{bmatrix}\begin{bmatrix}
        A & B\\
        B^\top & D
    \end{bmatrix}\begin{bmatrix}
        I & 0\\
        -D^{-1}B^\top & I
    \end{bmatrix}\succeq0\Leftrightarrow\begin{bmatrix}
        A - BD^{-1}B^\top & 0\\
        0 & D
    \end{bmatrix}\succeq0.$$
    That is, the first matrix in the above inequality being p.s.d. indicates that 
    $$A - BB^\top/\|D\|\succeq A - BD^{-1}B^\top\succeq0.$$ 
    Consequently, we have $\|D\|\cdot A-BB^\top\succeq0$. 

    If $A,D\succeq0$ are possibly singular, then one can repeat the above argument with $A_t := A+tI, D_t := D+tI, t>0$ to obtain $\|D_t\|\cdot A_t-BB^\top\succeq0$. Letting $t\to0$ and using the fact that the norm and the minimum eigenvalue of a matrix are continuous functions of its elements, we know $\|D\|\cdot A-BB^\top\succeq0$.

    Overall, if $\begin{bmatrix}
        A & B\\
        B^\top & D
    \end{bmatrix}\succeq0$ and $A,D\succeq0$, then we must have $\|D\|\cdot A-BB^\top\succeq0$.

    Next, without loss of generality, suppose the block index $i = m$, then $\mathcal{N}_m = [m-1]$, and we can apply this result to the matrix $L_s\cdot\nabla^2h(x)-\nabla^2f_s(x)$ with block division
    \begin{eqnarray*}
        A(x)  & = & L_s\cdot\nabla^2_{\mathcal{N}_m,\mathcal{N}_m}h(x)-\nabla^2_{\mathcal{N}_m,\mathcal{N}_m}f_s(x)\\
        & := & L_s\cdot\mathrm{Diag}\Big(\{\nabla^2h_i([x]_i)\}_{i=1}^{m-1}\Big)-\begin{bmatrix}
        \nabla^2_{1,1}f_s(x) & \cdots & \nabla^2_{1,m-1}f_s(x) \\
        \vdots & \ddots & \vdots\\
        \nabla^2_{m-1,1}f_s(x)& \cdots & \nabla^2_{m-1,m-1}f_s(x)  
    \end{bmatrix}\\
    B(x)&=& \nabla^2_{\mathcal{N}_m,m}f_s(x) := \begin{bmatrix}
        \nabla^2_{m,1}f_s(x) & \cdots & \nabla^2_{m,m-1}f_s(x)
    \end{bmatrix}^\top \\
    D(x) & = & L_s\cdot\nabla^2h_m([x]_m) - \nabla^2_{m,m}f(x)
    \end{eqnarray*}   
    Then we have 
    $$B(x)B(x)^\top\preceq \|D(x)\|\cdot A(x)\overset{(i)}{\preceq} 4L_s^2\|\nabla^2h_m([x]_m)\|\cdot \mathrm{Diag}\Big(\{\nabla^2h_i([x]_i)\}_{i=1}^{m-1}\Big)$$
    where (i) is because Assumption \ref{assumption:L-smad-finite-sum} guarantees that $L_s\cdot\nabla^2_{\mathcal{N}_m,\mathcal{N}_m}h(x)\pm\nabla^2_{\mathcal{N}_m,\mathcal{N}_m}f_s(x)\succeq0$ and $L_s\nabla^2h_m([x]_m)\pm\nabla^2_{m,m}f(x)\succeq0$. As a result, denoting $w = y-x$ and $\cX^j:=\big[[x]_j,[y]_j\big]$ for each $j\in[m]$, we can start bounding the squared difference of gradients as 
    \begin{eqnarray*}
    \|\nabla_mf(y)-\nabla_mf(x)\|^2 & = & \left\|\int_{0}^1\nabla^2_{m,[m]}f_s(x+tw)w \mathrm{d}t\right\|^2 \\
    & \leq & \int_{0}^1\|\nabla^2_{m,[m]}f_s(x+tw)w\|^2 \mathrm{d}t\\
    & \leq  & 2\int_{0}^1\left(\left\|\nabla^2_{m,m}f_s(x+tw)[w]_m\right\|^2 + \left\|B(x+tw)^\top w_{\mathcal{N}_m}\right\|^2\right) \mathrm{d}t.
    \end{eqnarray*}
    Note that 
    $$\left\|\nabla^2_{m,m}f_s(x+tw)[w]_m\right\|^2\leq L_s^2L_{h_m}^2\big(\cX^m\big)\|[w]_m\|^2\qquad\mbox{for}\qquad \forall t\in[0,1]$$
    and 
    \begin{eqnarray*}
        \left\|B(x+tw)^\top w_{\mathcal{N}_m}\right\|^2 & = & w_{\mathcal{N}_m}^\top B(x+tw)B(x+tw)^\top w_{\mathcal{N}_m} \\
        & \leq & 4L_s^2\|\nabla^2h_m([x+tw]_m)\|\cdot w_{\mathcal{N}_m}^\top \mathrm{Diag}\Big(\{\nabla^2h_i([x+tw]_j)\}_{j=1}^{m-1}\Big)w_{\mathcal{N}_m}\\
        & \leq & 4L_s^2L_{h_m}\big(\cX^m\big)\|\cdot \sum_{j\neq m}L_{h_j}\big(\cX^j\big)\|[w]_j\|^2\qquad\qquad\mbox{for}\qquad\qquad \forall t\in[0,1].
    \end{eqnarray*}  
    Then, combining the above inequalities proves the lemma. 
\end{proof}

\bibliographystyle{plain}
\bibliography{ref}
\end{document}